
\documentclass[final]{siamart1116}


\usepackage{setspace,url}
\usepackage{graphics,graphicx,epstopdf}
\usepackage{tabularx}
\usepackage{longtable}
\usepackage{booktabs,multirow,array,multicol}
\usepackage{dsfont}
\usepackage{enumitem}
\usepackage{subfig}
\usepackage[leqno]{amsmath}
\usepackage{amsxtra,amsfonts,amscd,amssymb,bm,epsf}
\usepackage{mathtools}
\usepackage{comment}
\usepackage[algo2e,linesnumbered,vlined,ruled]{algorithm2e}
\usepackage{amsopn}
\usepackage{xcolor}
\usepackage[normalem]{ulem}
\usepackage{times}

\ifpdf
  \DeclareGraphicsExtensions{.eps,.pdf,.png,.jpg}
\else
  \DeclareGraphicsExtensions{.eps}
\fi

\numberwithin{theorem}{section}
\numberwithin{equation}{section}

\newtheorem{remark}[theorem]{Remark}
\newtheorem{example}[theorem]{Example}
\newtheorem{assumption}[theorem]{Assumption}
\newtheorem{fact}[theorem]{Fact}

\newcommand{\TheTitle}{A stochastic semismooth Newton method for nonsmooth nonconvex optimization} 
\newcommand{\TheAuthors}{A. Milzarek, X. Xiao, S. Cen, Z. Wen, and M. Ulbrich}

\headers{A stochastic semismooth Newton method for nonsmooth optimization}{\TheAuthors}

\title{{\TheTitle}
}

\author{
Andre Milzarek\thanks{Beijing International Center for Mathematical Research, BICMR, Peking University, Beijing, China
(\email{andremilzarek@bicmr.pku.edu.cn}). Research supported by the Boya Postdoctoral Fellowship.} \and
Xiantao Xiao\thanks{School of Mathematical Sciences, Dalian University of Technology, Dalian, China (\email{xtxiao@dlut.edu.cn}).} \and
Shicong Cen\thanks{School of Mathematical Sciences, Peking University, Beijing, China (\email{tsen9731@pku.edu.cn}).} \and
Zaiwen Wen\thanks{Beijing International Center for Mathematical Research, BICMR, Peking University, Beijing, China
(\email{wenzw@pku.edu.cn}). Research supported in part by the NSFC grant 11421101 and by the National Basic Research Project under the grant 2015CB856002.} \and
Michael Ulbrich\thanks{Chair of Mathematical Optimization, Department of Mathematics, Technical University Munich, Garching b. M\"unchen, Germany (\email{mulbrich@ma.tum.de}).}
}

\newcommand{\rmn}[1]{\textup{\textrm{#1}}}

\definecolor{tumb}{RGB}{0,101,189}

\newcommand{\R}{\mathbb{R}}
\newcommand{\Rn}{\mathbb{R}^n}

\newcommand{\Rex}{(-\infty,+\infty]}

\newcommand{\N}{\mathbb{N}}
\newcommand{\Sn}{\mathbb{S}^n}
\newcommand{\Spp}{\mathbb{S}^n_{++}}
\newcommand{\dom}{\mathrm{dom}}
\newcommand{\epi}{\mathrm{epi}}
\newcommand{\lev}{\mathrm{lev}}
\newcommand{\cl}{\mathrm{cl}}


\newcommand{\inter}{\mathrm{int}}

\newcommand{\gra}{\mathrm{gra}}

\newcommand{\vp}{\varphi}
\newcommand{\veps}{\varepsilon}
\newcommand{\ewmin}{\lambda_{\min}}
\newcommand{\ewmax}{\lambda_{\max}}
\newcommand{\lamm}{\lambda_{m}}
\newcommand{\lamM}{\lambda_{M}}
\newcommand{\unl}{\underline{\lambda}}
\newcommand{\ovl}{\overline{\lambda}}


\newcommand{\cF}{\mathcal{F}}  
  
\newcommand{\cG}{\mathcal{G}}  
  
\newcommand{\cH}{\mathcal{H}} 
\newcommand{\cM}{\mathcal{M}}
 
\newcommand{\cS}{{\mathcal{S}}}
\newcommand{\cO}{{\mathcal{O}}}


\newcommand{\proxt}[2]{{\mathrm{prox}}^{#1}_{#2}}

\newcommand{\FopL}{F^\Lambda}

\newcommand{\Exp}{\mathbb{E}} 
\newcommand{\Prob}{\mathbb{P}} 

\newcommand{\gfsub}{G_{s}} 
\newcommand{\gfsubk}{G_{{s}^k}}
\newcommand{\gfsubkk}{G_{{s}^{k+1}}}

\newcommand{\errgk}{\mathcal E^{\sf g}_{k}}

\newcommand{\psub}{p^\Lambda_{s}}
\newcommand{\psubk}{p^{\Lambda_k}_{{s}^k}}
\newcommand{\usub}{u^\Lambda_{s}}
\newcommand{\usubk}{u^{\Lambda_k}_{{s}^k}}

\newcommand{\hfsub}{H_{t}}
\newcommand{\hfsubk}{H_{t^k}}

\newcommand{\errhk}{\mathcal E^{\sf h}_{k}}

\newcommand{\Fsub}{{F}^{\Lambda}_{s}}
\newcommand{\Fsubk}{F^{\Lambda_k}_{{s}^k}}
\newcommand{\Fsubkk}{F^{\Lambda_{k+1}}_{{s}^{k+1}}}
\newcommand{\dsetM}{\cM^{\Lambda}_{{s},t}}
\newcommand{\dsetMk}{\cM^{\Lambda_k}_{{s}^k,t^k}}

\newcommand{\ngk}{{\bf n}^{\sf g}_k}
\newcommand{\nhk}{{\bf n}^{\sf h}_k}

\newcommand{\nstep}{z^k_{\sf n}}
\newcommand{\pstep}{z^k_{\sf p}}

\newcommand{\minimize}{\mathop{\textrm{min}}}

\newcommand{\half}{\frac{1}{2}}
\newcommand{\iprod}[2]{\langle #1, #2 \rangle}

\newcommand{\sfo}{\cS\cF\cO}
\newcommand{\sso}{\cS\cS\cO}

\DeclareMathOperator*{\argmin}{arg\,min}

\DeclareMathOperator*{\diag}{diag}

\newcommand{\be}{\begin{equation}}
\newcommand{\ee}{\end{equation}}
\newcommand{\bee}{\begin{equation*}}
\newcommand{\eee}{\end{equation*}}

\newcommand{\bea}{\begin{eqnarray}}
\newcommand{\eea}{\end{eqnarray}}
\newcommand{\beaa}{\begin{eqnarray*}}
\newcommand{\eeaa}{\end{eqnarray*}}

\newcommand{\tr}{\mathrm{tr}}


\ifpdf
\hypersetup{
  pdftitle={A stochastic semismooth Newton method for nonsmooth nonconvex optimization},
  pdfauthor={\TheAuthors}
}
\fi


\begin{document}

\maketitle


\begin{abstract}
In this work, we present a globalized stochastic semismooth Newton method for solving stochastic optimization problems involving smooth nonconvex and nonsmooth convex terms in the objective function. We assume that only noisy gradient and Hessian information of the smooth part of the objective function is available via calling stochastic first and second order oracles. The proposed method can be seen as a hybrid approach combining stochastic semismooth Newton steps and stochastic proximal gradient steps. Two inexact growth conditions are incorporated to monitor the convergence and the acceptance of the semismooth Newton steps and it is shown that the algorithm converges globally to stationary points in expectation. Moreover, under standard assumptions and utilizing random matrix concentration inequalities, we prove that the proposed approach locally turns into a pure stochastic semismooth Newton method and converges r-superlinearly with high probability. We present numerical results and comparisons on $\ell_1$-regularized logistic regression and nonconvex binary classification that demonstrate the efficiency of our  algorithm.  \end{abstract} 


\begin{keywords}
nonsmooth stochastic optimization, stochastic approximation, semismooth Newton method, sto\-chas\-tic second order information, global and local convergence.
\end{keywords}


\begin{AMS}
49M15, 65C60, 65K05,  90C06
\end{AMS}



\section{Introduction}\label{sec:intro}
In this paper, we propose and analyze a stochastic semismooth Newton framework for solving general nonsmooth, nonconvex optimization problems of the form 
%
\be\label{eq:prob}
\minimize\limits_{x\in\R^n}~\psi(x):= f(x)+r(x),
\ee
where 
$f: \Rn \to \R$ is a (twice) continuously differentiable but possibly nonconvex function
and $r : \R^n \to \Rex$ is a convex, lower semicontinuous, and proper mapping. Although the function $f$ is smooth, we assume that a full evaluation of $f$ and an exact computation of the gradient and Hessian values $\nabla f(x)$ and $\nabla^2 f(x)$ is either not completely possible or too expensive in practice. Instead, we suppose that only noisy gradient and Hessian information is available which can be accessed via calls to \textit{stochastic first} ($\cS\cF\cO$) and \textit{second order oracles} ($\cS\cS\cO$). 
Composite problems of the type \cref{eq:prob} arise frequently in statistics and in large-scale statistical learning, see, e.g., \cite{HTF2009,MaiBacPonSap09,BacJenMaiObo11,ShaBen14,BCN2016}, and in many other applications. In these examples and problems, the smooth mapping $f$ is typically of the form
\be \label{eq:prob-ex} f(x) := \Exp[F(x,\xi)] = \int_{\Omega} F(x,\xi(\omega))\,{\rm d}\Prob(\omega), \quad \text{or} \quad f(x) := \frac{1}{N} \sum_{i=1}^N f_i(x), \ee
where $\xi : \Omega \to W$ is a random variable defined on a given probability space $(\Omega,\mathcal F,\Prob)$, $W$ is a measurable space, and $F : \Rn \times W \to \R$ and the component functions $f_i : \Rn \to \R$, $i = 1,...,N$, correspond to certain loss models. More specifically, in the latter case, when the nonsmooth term $r \equiv 0$ vanishes, the problem \cref{eq:prob} reduces to the so-called and well-studied \textit{empirical risk minimization problem}
\be \label{eq:finite-sum} \min_{x\in\R^n}~ f(x),\quad f(x) := \frac{1}{N} \sum_{i=1}^N f_i(x). \ee
Since the distribution $\Prob$ in \cref{eq:prob-ex} might not be fully known and the number of components $N$ in \cref{eq:finite-sum} 
can be extremely large, stochastic approximation techniques, such as the mentioned stochastic oracles, have become an increasingly important tool in the design of efficient and computationally tractable numerical algorithms for the problems \cref{eq:prob} and \cref{eq:finite-sum}, \cite{NemJudLanSha08,GL2013,ShaBen14,GL2016,GhaLanZha16,WMGL2017,WZ2017}. Moreover, in various interesting problems such as deep learning, dictionary learning, training of neural networks, and classification tasks with nonconvex activation functions, \cite{MasBaxBarFre99,MaiBacPonSap09,BacJenMaiObo11,DenYu14,LeCBenHin15,Sch15,GooBenCou16}, the loss function $f$ is nonconvex, which represents another major challenge for stochastic optimization approaches. For further applications and additional connections to simulation-based optimization, we refer to \cite{GL2013,GhaLanZha16}. 
%
%

 
\subsection{Related Work} 

The pioneering idea of utilizing stochastic approximations and the development of the associated, classical stochastic gradient descent method (SGD) for problem \cref{eq:finite-sum} and other stochastic programs can be traced back to the seminal work of Robbins and Monro \cite{RM1951}. Since then, a plethora of stochastic optimization methods, strategies, and extensions has been studied and proposed for different problem formulations and under different basic assumptions. In the following, we give a brief overview of related research directions and related work. 

\textit{First order methods}. Advances in the research of stochastic first order methods for the smooth empirical risk problem \cref{eq:finite-sum} are numerous and we will only name a few recent directions here. Lately, based on the popularity and flexible applicability of the basic SGD method, a strong focus has been on the development and analysis of more sophisticated stochastic first order oracles to reduce the variance induced by gradient sampling and to improve the overall performance of the underlying SGD method. Examples of algorithms that utilize such variance reduction techniques include SVRG \cite{JZ2013}, SDCA \cite{SSZ2013}, SAG \cite{SLB2017}, and SAGA \cite{DBLJ2014}. Moreover, Friedlander and Schmidt \cite{FS2012} analyze the convergence of a mini-batch stochastic gradient method for strongly convex $f$, in which the sampling rates are increased progressively. Incorporating different acceleration strategies, the first order algorithms Catalyst \cite{LMH2015} and Katyusha \cite{AZ2017} further improve the iteration complexity of the (proximal) SGD method.   

Several of the mentioned algorithms can also be extended to the nonsmooth setting $r \not\equiv 0$ by using the proximity operator of $r$ and associated stochastic proximal gradient steps, see, e.g., the perturbed proximal gradient method \cite{AtcForMou17} studied by Atchad\'{e} et al$\text{.}$, prox-SVRG \cite{XZ2014}, prox-SAGA \cite{DBLJ2014}, and prox-SDCA \cite{SSZ2016}.
AdaGrad \cite{DHS2011} is another extension of the classical SGD method that utilizes special adaptive step size strategies. 
Under the assumption that $r$ is block separable, Richt\'arik and Tak\'a\v c \cite{RT2014} develop a randomized block-coordinate descent method for \cref{eq:prob}. An accelerated variant of this approach is investigated by Lin et al$\text{.}$ \cite{LLX2015}. 

The methods discussed so far either require convexity of $f$ or of each of the component functions $f_i$ or even stronger assumptions. Ghadimi and Lan \cite{GL2013,GL2016} generalize the basic and accelerated SGD method to solve nonconvex and smooth minimization problems. Allen-Zhu and Hazan \cite{AZH2016} and Reddi et al$\text{.}$ \cite{RHSPS2016} analyze stochastic variance reduction techniques for the nonconvex version of problem \cref{eq:finite-sum}. Moreover, Reddi et al$\text{.}$ \cite{RSPS2016} and Allen-Zhu \cite{Allen2017} further extend existing stochastic first order methods to find approximate stationary points of the general nonconvex, nonsmooth model \cref{eq:prob}. In \cite{GhaLanZha16}, Ghadimi et al$\text{.}$  discuss complexity and convergence results for a mini-batch stochastic projected gradient algorithm for problem \cref{eq:prob}. Xu and Yin \cite{XuYin15} present and analyze a block stochastic gradient method for convex, nonconvex, and nonsmooth variants of the problem \cref{eq:prob}.

\textit{Quasi-Newton and second order methods}. 
Recently, in order to accelerate and robustify the convergence of first order algorithms, stochastic second order methods have gained much attention. So far, the majority of stochastic second order methods is designed for the smooth problem \cref{eq:finite-sum} and is based on variants of the sub-sampled Newton method in which approximations of the gradient and Hessian of $f$ are generated by selecting only a sub-sample or mini-batch of the components $\nabla f_i$ and $\nabla^2 f_i$, $i = 1,..., N$. In \cite{BCNN2011,BCNW2012}, assuming positive definiteness of the sub-sampled Hessians, the authors analyze the convergence of a sub-sampled Newton-CG method and discuss strategies for selecting the sample sets. Erdogdu and Montanari \cite{EM2015} derive convergence rates of a projected sub-sampled Newton method with rank thresholding. In \cite{RKM2016I,RKM2016II}, Roosta-Khorasani and Mahoney establish non-asymptotic, probabilistic global and local convergence rates for sub-sampled Newton methods by applying matrix concentration inequalities. Xu et al$\text{.}$ \cite{XYRRM2016} present convergence and complexity results for a sub-sampled Newton-type approach with non-uniform sampling. Bollapragada et al$\text{.}$ \cite{BBN2016} consider a sub-sampled Newton method for problems with the more general loss function given in \cref{eq:prob-ex} and derive r-superlinear convergence rates in expectation using a ``bounded moment'' condition to overcome the nonequivalence of norms in infinite dimensions. In \cite{WZ2017}, Wang and Zhang propose an algorithm that combines the advantages of variance reduction techniques and sub-sampled Newton methods. Convergence properties are studied under the assumption that $f$ is strongly convex and the Hessians $\nabla^2 f_i$ are Lipschitz continuous (with a uniform constant). Based on the existence of a suitable square-root decomposition of the Hessian, Pilanci and Wainwright \cite{PW2017} propose a Newton sketch method for general convex,  smooth programs. In \cite{BBN2017}, the numerical performance of the Newton sketch method and different sub-sampled Newton approaches is compared. Furthermore, based on unbiased estimators of the inverse Hessian, a stochastic method called LiSSA is studied in \cite{ABH2016}. A recent discussion of different stochastic second order algorithms can also be found in \cite{YeLuoZha17}. 

Stochastic quasi-Newton methods represent another large and important class of stochastic numerical algorithms for problem \cref{eq:finite-sum}. Typically, these methods combine specific sub-sampling schemes for $\nabla f$ with randomized BFGS or BFGS-type updates to approximate the Hessian $\nabla^2 f$. 
In \cite{SchYuGue07}, the authors propose a basic sto\-chas\-tic quasi-Newton algorithm for quadratic loss functions. Bordes et al$\text{.}$ \cite{BBG2009} present a quasi-Newton approach that is based on diagonal curvature estimation. Mokhtari and Ribeiro \cite{MR2014} investigate a regularized stochastic BFGS method for solving strongly convex problems. In \cite{BHNS2016}, Byrd et al$\text{.}$ consider a stochastic limited-memory BFGS (L-BFGS) algorithm that in\-cor\-po\-rates exact Hessian information of the functions $f_i$ to build the BFGS-type updates. The stochastic L-BFGS method discussed in \cite{MNJ2016} uses variance reduction techniques to improve its convergence and performance. Moreover, Gower et al$\text{.}$ \cite{GGR2016} establish linear convergence of a stochastic block L-BFGS method if the functions $f_i$ are strongly convex. 

In contrast, the number of stochastic second order algorithms for smooth but nonconvex problems seems to be still quite limited. Based on a damping strategy for BFGS-type updates introduced by Powell \cite{Pow78} and using general stochastic first order oracles, Wang et al$\text{.}$ \cite{WMGL2017} propose a stochastic L-BFGS method for smooth, nonconvex problems. Under the assumption that the full gradient of the objective function is available, Xu et al$\text{.}$ \cite{XRKM2017} derive worst-case optimal iteration complexity results for an adaptive cubic regularization method with inexact or sub-sampled Hessian information. Generalizations and further aspects of this approach have been considered very recently in \cite{XuRooMah17,YaoXuRooMah18}. 

Finally, in \cite{WZ2017}, Wang and Zhang mention an extension of their hybrid method to the nonsmooth setting. A similar and related idea has also been presented in \cite{ShiLiu15}. In particular, these approaches can be interpreted as stochastic variants of the proximal Newton method \cite{LeeSunSau14} for the general problem \cref{eq:prob}. Nevertheless, strong and uniform convexity assumptions are still required to guarantee convergence and well-definedness of the inner steps and subproblems.

Let us note that there is also a vast literature on incremental methods for \cref{eq:prob} and \cref{eq:finite-sum}, see, e.g., \cite{Bert2015,GOP2015}, which is beyond the scope of this paper.


\subsection{Contents and Contributions}
In this paper, we develop a stochastic second order framework for the general optimization problem \cref{eq:prob}. Our basic idea is to apply a semismooth Newton method to approximately solve the 
nonsmooth fixed point-type equation  
\be \label{eq:opt-intro} F^\Lambda(x) := x - \proxt{\Lambda}{r}(x - \Lambda^{-1} \nabla f(x)) = 0, \quad \Lambda \in \Spp, \ee
which represents a reformulation of the associated first order optimality conditions of problem \cref{eq:prob}. Specifically, we will consider stochastic variants of the nonsmooth residual \cref{eq:opt-intro} and of the semismooth Newton method, in which the gradient and Hessian of $f$ are substituted by stochastic oracles. 
Motivated by deterministic Newton-type approaches \cite{MilUlb14,Mil16,XLWZ2016}, our proposed method combines stochastic semismooth Newton steps, stochastic proximal gradient steps, and a globalization strategy that is based on controlling the acceptance of the Newton steps via growth conditions. In this way, the resulting stochastic algorithm can be guaranteed to converge globally in expectation and almost surely, i.e., for a generated sequence of iterates $(x^k)_k$, we have 
\[ \Exp[\|F^\Lambda(x^k)\|^2] \to 0 \quad \text{and} \quad F^\Lambda(x^k) \to 0 \;\; \text{almost surely}, \quad k \to \infty. \] 
Furthermore, inspired by \cite{RKM2016I,RKM2016II} and using matrix concentration inequalities \cite{Tro12}, we prove that transition to fast local r-linear or r-superlinear convergence can be established with high probability if the sample sizes and sampling rates are chosen appropriately and increase sufficiently fast. To the best of our knowledge, rigorous extensions of existing stochastic sec\-ond order methods to the nonsmooth,  nonconvex setting considered in this work do not seem to be available.  
We now briefly summarize some of the main challenges and contributions. \vspace{.5ex}
\begin{itemize}
\item We provide a unified convergence theory for the proposed stochastic Newton method covering global and local aspects and transition to fast local convergence. In contrast to many other works, convexity of the smooth function $f$ or of the objective function $\psi$ is not required in our analysis. 
\item In order to ensure global convergence and based on an acceptance test, the algorithm is allowed to switch between Newton and proximal gradient steps. Hence, a priori, it is not clear whether the generated iterates correspond to measurable random variables or to a stochastic process. This structural mechanism is significantly different from other existing stochastic approaches and will be discussed in detail in \cref{sec:global}. 
\item Our algorithmic approach and theoretical results are applicable for general stochastic oracles. Consequently, a large variety of approximation schemes, such as basic sub-sampling strategies or more elaborate variance reduction techniques, \cite{JZ2013,XZ2014,RSPS2016,WMGL2017}, can be used within our framework. In particular, in our numerical experiments, we investigate a variance reduced version of our method. Similar to \cite{WMGL2017}, the numerical results indicate that the combination of second order information and variance reduction techniques is also very effective in the nonsmooth setting. We note that the proposed method (using different stochastic oracles) performs quite well in comparison with other state-of-the-art algorithms in general. 
\end{itemize}
 
 
\subsection{Organization}
This  paper is organized as follows. Our specific stochastic setup, a derivation of the equation \cref{eq:opt-intro}, and the main algorithm are stated in \cref{sec:algo}. The global and local convergence results are presented in \cref{sec:global} and \cref{sec:local}, respectively. Finally, in \cref{sec:numerical}, we report and discuss our numerical comparisons and experiments in detail. 

\subsection{Notation} For any $n \in \N$, we set $[n] := \{1,...,n\}$. By $\iprod{\cdot}{\cdot}$ and $\|\cdot\| := \|\cdot\|_2$ we denote the standard Euclidean inner product and norm. The set of symmetric and positive definite $n \times n$ matrices is denoted by $\Spp$. For a given matrix $\Lambda \in \Spp$, we define the inner product $\iprod{x}{y}_\Lambda := \iprod{x}{\Lambda y} = \iprod{\Lambda x}{y}$ and $\|x\|_\Lambda := \sqrt{\iprod{x}{x}_\Lambda}$. The set $\lev_\alpha\,f := \{x : f(x) \leq \alpha\}$ denotes the lower level set of a function $f$ at level $\alpha \in \R$. For a given set $S \subset \Rn$, the set $\cl~S$ denotes the closure of $S$ and $\mathds 1_S : \Rn \to \{0,1\}$ is the associated characteristic function of $S$. For $p \in (0,\infty)$ the space $\ell^p_+$ consists of all sequences $(x_n)_{n\geq 0}$ satisfying $x_n \geq 0$, $n \geq 0$, and $\sum x_n^p < \infty$. Let $(\Omega,\mathcal F,\Prob)$ be a given probability space. The space $L^p(\Omega) := L^p(\Omega,\Prob)$, $p \in [1,\infty]$, denotes the standard $L^p$ space on $\Omega$. We write $\xi \in \mathcal F$ for ``$\xi$ is $\mathcal F$-measurable''. Moreover, we use $\sigma(\xi^1,...,\xi^k)$ to denote the $\sigma$-algebra generated by the family of random variables $\xi^1,...,\xi^k$. For a random variable $\xi \in L^1(\Omega)$ and a sub-$\sigma$-algebra $\cH \subseteq \mathcal F$, the conditional expectation of $\xi$ given $\cH$ is denoted by $\Exp[\xi \,|\, \cH]$. The conditional probability of $S \in \mathcal F$ given $\cH$ is defined as $\Prob(S \mid \cH) := \Exp[{\mathds 1}_S \mid \cH]$. We use the abbreviations ``$\text{a.e.}$'' and ``$\text{a.s.}$'' for ``almost everywhere'' and ``almost surely'', respectively. 


\section{A Stochastic Semismooth Newton Method} \label{sec:algo}


\subsection{Probabilistic Setting and Preliminaries} \label{sec:prelim}

In this section, we introduce several basic definitions and preparatory results. 
We start with an overview of the stochastic setting and the sampling strategy.


\subsubsection{Stochastic Setup} \label{sec:setup} 
Although the function $f$ is smooth, we assume that an exact or full evaluation of the gradient $\nabla f$ and Hessian $\nabla^2 f$ is not possible or is simply too expensive. Hence, we will work with \textit{stochastic first} $(\cS\cF\cO)$ and \textit{second order oracles} $(\cS\cS\cO)$,
\[ \cG : \Rn \times \Xi \to \Rn, \quad \cH : \Rn \times \Xi \to \Sn \]
to approximate gradient and Hessian information. Specifically, given an underlying probability space $(\Omega,\cF,\Prob)$ and a measurable space $(\Xi,\mathcal X)$, we generate two mini-batches of random samples
\[ {s}^k:=\{{s}^k_1,\ldots,{s}^k_{\ngk}\} \quad \mbox{and}\quad {t}^k := \{{t}^k_1,\ldots,{t}^k_{\nhk}\} \]
and calculate the stochastic approximations $\cG(x,s^k_i) \approx \nabla f(x)$ and $\cH(x,t^k_j) \approx \nabla^2 f(x)$ in each iteration. Here, we assume that the space $\Omega$ is sufficiently rich allowing us to model and describe the (possibly independent) sample batches $s^k$, $t^k$ and other associated stochastic processes in a unified way. Moreover, each of the samples $s^k_i, t^k_j : \Omega \to \Xi$, $i \in [\ngk]$, $j \in [\nhk]$, corresponds to an $(\cF,\mathcal X)$-measurable, random mapping and $\ngk$ and $\nhk$ denote the chosen sample rates or sample sizes of $s^k$ and $t^k$, respectively. 
Similar to \cite{DanLan15,XuYin15,GL2016,GhaLanZha16,WMGL2017}, we then construct a mini-batch-type, stochastic gradient $\gfsubk(x)$ and Hessian $\hfsubk(x)$ as follows
\be \label{eq:def-subgh} \gfsubk(x):=\frac{1}{\ngk}\sum_{i = 1}^{\ngk}\cG(x,s^k_i), \quad \quad \hfsubk(x):=\frac{1}{\nhk}\sum_{i = 1}^{\nhk}\cH(x,t^k_i). \ee
%
%
%
Throughout this work, we assume that the stochastic oracles $\cG$ and $\cH$ are \textit{Carath\'{e}odory functions}\,\footnote{A mapping $F: \Rn \times \Xi \to \R$ is called \textit{Carath\'{e}odory function} if $F(\cdot,z) : \Rn \to \R$ is continuous for all $z \in \Xi$ and if $F(x,\cdot) : \Xi \to \R$ is measurable for all $x \in \Rn$.}. Further assumptions on the stochastic setting will be introduced later in \cref{sec:global-assumption} and \cref{sec:loc-condconc}. 

We will also sometimes drop the index $k$ from the mini-batches $s^k$, $t^k$ and sample sizes $\ngk$, $\nhk$ when we consider a general pair of batches $s$ and $t$. 


\subsubsection{Definitions and First Order Optimality}
In the following, we derive first order optimality conditions for the composite problem \cref{eq:prob}. Suppose that $x^* \in \dom~r$ is a local solution of problem \cref{eq:prob}. Then, $x^*$ satisfies the mixed-type variational inequality 
\be \label{eq:fst-opt-vip} \iprod{\nabla f(x^*)}{x-x^*} + r(x) - r(x^*) \geq 0, \quad \forall~x \in \Rn. \ee
By definition, the latter condition is equivalent to $- \nabla f(x^*) \in \partial r(x^*)$, where $\partial r$ denotes the convex subdifferential of $r$.  We now introduce the well-known \textit{proximal mapping} $\proxt{\Lambda}{r} : \Rn \to \Rn$ of $r$. For an arbitrary parameter matrix $\Lambda \in \Spp$, the proximity operator $\proxt{\Lambda}{r}(x)$ of $r$ at $x$ is defined as
\be \label{eq:def-prox}  \proxt{\Lambda}{r}(x) := \argmin_{y \in \Rn}~r(y) + \half \|x-y\|_\Lambda^2. \ee
The proximity operator is a $\Lambda$-firmly nonexpansive mapping, i.e., it satisfies 
\[  \|\proxt{\Lambda}{r}(x) - \proxt{\Lambda}{r}(y)\|^2_\Lambda \leq \iprod{\proxt{\Lambda}{r}(x) - \proxt{\Lambda}{r}(y)}{x - y}_\Lambda, \quad \forall~x,y \in \Rn. \]
Consequently, $\proxt{\Lambda}{r}$ is Lipschitz continuous with modulus 1 with respect to the norm $\|\cdot\|_\Lambda$. We refer to \cite{Mor65,ComWaj05,BacJenMaiObo11,BauCom11} for more details and (computational) properties. Let us further note that the proximity operator can also be uniquely characterized by the optimality conditions of the underlying optimization problem \cref{eq:def-prox}, i.e., 
\be \label{eq:prox:sub-prox} \proxt{\Lambda}{r}(x) \in x - \Lambda^{-1} \cdot \partial r(\proxt{\Lambda}{r}(x)). \ee
Using this characterization, condition \cref{eq:fst-opt-vip} can be equivalently rewritten as follows: 
\be \label{eq:non-eq} F^\Lambda(x^*) = 0, \quad \text{where} \quad F^\Lambda(x) := x - \proxt{\Lambda}{r}(x- \Lambda^{-1} \nabla f(x)). \ee
We call $x \in \Rn$ a \textit{stationary point}  of problem \cref{eq:prob}, if it is a solution of  the nonsmooth equation \cref{eq:non-eq}. If the problem is convex, e.g., if $f$ is a convex function, then every stationary point is automatically a local and global solution of \cref{eq:prob}. The fixed point-type equation \cref{eq:non-eq} forms the basis of the proximal gradient method, \cite{FukMin81-1,FukMin81,ComWaj05,ParBoy14}, which has been studied intensively during the last decades. 
%
%

For an arbitrary sample $s$, the corresponding stochastic residual is given by
\[ \Fsub(x) := x - \proxt{\Lambda}{r}(x- \Lambda^{-1} \gfsub(x)). \]
We will also use $\usub(x) := x - \Lambda^{-1} \gfsub(x)$ and $\psub(x) := \proxt{\Lambda}{r}(\usub(x))$ to denote the stochastic (proximal) gradient steps. 
 

\subsection{Algorithmic Framework} \label{subsec:algo} 
In this section, we describe our algorithmic approach in detail. The overall idea is to use a stochastic semismooth Newton method to calculate an approximate solution of the optimality system 
\[ F^\Lambda(x) = 0. \] 
The associated Newton step $d^k$ at iteration $k$ is then given by the linear system of equations
 \be \label{eq:newton-step} M_k d^k = - \Fsubk(x^k), \quad M_k \in \dsetMk(x^k). \ee
Here, we consider the following set of generalized derivatives
 \be \label{eq:gen-deriv} \dsetM(x) := \{M \in \R^{n \times n}: M = (I - D) + D\Lambda^{-1} \hfsub(x), \, \, D \in \partial \proxt{\Lambda}{r}(\usub(x))\}, \ee
where $\partial \proxt{\Lambda}{r}(\usub(x))$ denotes the Clarke subdifferential of $ \proxt{\Lambda}{r}$ at the point $\usub(x)$. The set $\dsetM(x)$ depends on the stochastic gradient and on the stochastic Hessian defined in \cref{eq:def-subgh}. Moreover, the samples $s^k$, $t^k$ and the matrix $\Lambda_k$ used in \cref{eq:newton-step} may change in each iteration, see also \cref{remark:ada-lambda}. We further note that, in practice, the system \cref{eq:newton-step} can be solved inexactly via iterative approaches such as the conjugate gradient or other Krylov subspace methods. 

In the deterministic setting, the set $\dsetM(x)$ reduces to $\mathcal M^\Lambda(x) := \{M = (I-D) + D\Lambda^{-1}\nabla^2 f(x), D \in \partial\proxt{\Lambda}{r}(u^\Lambda(x))\}$ with $u^\Lambda(x) = x - \Lambda^{-1}\nabla f(x)$. In general, $\mathcal M^\Lambda(x)$ does not coincide with Clarke's subdifferential $\partial F^\Lambda(x)$. As shown in \cite{Clarke1990}, we can only guarantee $\partial F^\Lambda(x)h \subseteq \text{co}(\mathcal M^\Lambda(x)h)$ for $h \in \Rn$. However, the set-valued mapping $\mathcal M^\Lambda : \Rn \rightrightarrows \R^{n \times n}$ defines a so-called \textit{(strong) linear Newton approximation} at $x$ if the proximity operator $\proxt{\Lambda}{r}$ is (strongly) semismooth at $u^\Lambda(x)$. In particular, $\mathcal M^\Lambda$ is upper semicontinuous and compact-valued. We refer to \cite[Chapter 7]{FP2003II} and \cite{PatSteBem14} for more details. We also note that the chain rule for semismooth functions implies that $F^\Lambda(x)$ is semismooth at $x$ with respect to $\mathcal M^\Lambda(x)$ if $\proxt{\Lambda}{r}$ is semismooth at $u^\Lambda(x)$. Furthermore, in various important examples including, e.g., $\ell_1$- or nuclear norm-regularized optimization, group sparse problems or semidefinite programming, the associated proximal mapping $\proxt{\Lambda}{r}$ can be shown to be (strongly) semismooth and there exist explicit and computationally tractable representations of the generalized derivatives $D \in \partial \proxt{\Lambda}{r}(\cdot)$, see \cite{PatSteBem14,Mil16,XLWZ2016} for a detailed discussion. 

 
\begin{algorithm2e}[t]
\caption{A Stochastic Semismooth Newton Method}    
\label{alg:ssn}
\lnlset{alg:SSN1}{1}{Initialization: ~~Choose an initial point $x^0 \in \dom~r$, $\theta_0 \in \R_+$, and mini-batches $s^0, t^0$. Select sample sizes $(\ngk)_{k}$, $(\nhk)_{k}$, parameter matrices $(\Lambda_k)_k \subset \Spp$, and step sizes $(\alpha_k)_{k}$. Choose $\eta, p \in (0,1)$, $\beta > 0$, and $(\nu_k)_k$, $(\veps_k^1)_k$, $(\veps^2_k)_k$. Set iteration $k:=0$.} 
\\ \vspace{.5ex}
\While{did not converge}{
\lnlset{alg:SSN2}{3}{Compute $\Fsubk(x^k)$ and choose $M_k \in \dsetMk(x^k)$. For all $i = 1,...,{\bf n}^{\sf g}_{k+1}$ and $j =1,...,{\bf n}^{\sf h}_{k+1}$ select new samples $s^{k+1}_i, t^{k+1}_j$.} \\ \vspace{.5ex}
\lnlset{alg:SSN3}{4}{Compute the Newton step $d^k$ by solving
\begin{center} $M_kd^k = -\Fsubk(x^k)$. \end{center}} 
\lnlset{alg:SSN4}{5}{Set $\nstep = x^k + d^k$. If the conditions $\nstep \in \dom~r$, \cref{eq:growth-1}, and \cref{eq:growth-2} are satisfied, skip step \ref{alg:SSN5} and set $x^{k+1} = \nstep$, $\theta_{k+1} = \|\Fsubkk(\nstep)\|$. Otherwise go to step \ref{alg:SSN5}.} \\ \vspace{.5ex}
\lnlset{alg:SSN5}{6}{Set $v^k = - \Fsubk(x^k)$, $x^{k+1} = x^k + \alpha_k v^k $, and $\theta_{k+1} = \theta_k$.} \\ \vspace{.5ex}
\lnlset{alg:SSN6}{7}{Set $k\gets k+1$.} \\ }
\end{algorithm2e}
 
In order to control the acceptance of the Newton steps and to achieve global convergence of our algorithm, we introduce the following growth conditions for the trial step $\nstep = x^k + d^k$:
\begin{align} \label{eq:growth-1} \|\Fsubkk(\nstep)\| &\leq (\eta + \nu_k) \cdot \theta_k + \veps^1_k, \\ \label{eq:growth-2} \psi(\nstep) &\leq \psi(x^k) + \beta \cdot \theta_k^{1-p} \|\Fsubkk(\nstep)\|^p   + \veps_k^2. \end{align}
If the trial point $\nstep$ satisfies both conditions and is feasible, i.e., if $\nstep \in \dom~r$, we accept it and compute the new iterate $x^{k+1}$ via $x^{k+1} = \nstep$. The parameter sequences $(\nu_k)_k$, $(\veps^1_k)_k$, and $(\veps_k^2)_k$ are supposed to be nonnegative and summable and can be chosen during the initialization or during the iteration process. Furthermore, the parameter $\theta_k$ keeps track of the norm of the residual $F^{\Lambda_i}_{s^i}(x^i)$ of the last \textit{accepted} Newton iterate $x^i$, $i < k$, and is updated after a successful Newton step. The parameters $\beta > 0$, $\eta, p \in (0,1)$ are given constants. If the trial point $\nstep$ does not satisfy the conditions \cref{eq:growth-1} and \cref{eq:growth-2}, we reject it and perform an alternative proximal gradient step using the stochastic residual $\Fsubk(x^k)$ as an approximate descent direction. We also introduce a step size $\alpha_k$ to damp the proximal gradient step and to guarantee sufficient decrease in the objective function $\psi$. A precise bound for the step sizes $\alpha_k$ is derived in \cref{prop:prox-descent}. The details of the method are summarized in Algorithm \ref{alg:ssn}. 

Our method can be seen as a hybrid of the semismooth Newton method and the standard proximal gradient method generalizing the deterministic Newton approaches presented in \cite{MilUlb14,Mil16} to the stochastic setting. Our globalization technique is inspired by \cite{MilUlb14}, where a filter globalization strategy was proposed to control the acceptance of the Newton steps. Similar to \cite{MilUlb14,Mil16}, we add condition \cref{eq:growth-1} to monitor the behavior and convergence of the Newton steps. The second condition \cref{eq:growth-2} (together with the feasibility condition $\nstep \in \dom~r$) is required to bound the possible $\psi$-ascent of intermediate Newton steps. In contrast to smooth optimization problems, descent-based damping techniques or step size selections, as used in, e.g., \cite{BCNN2011,BBN2016,BHNS2016,RKM2016I,RKM2016II,WMGL2017}, can not always guarantee sufficient $\psi$-descent of the semismooth Newton steps due to the nonsmooth nature of problem \cref{eq:prob}. This complicates the analysis and globalization of semismooth Newton methods in general. In practice, the second growth condition \cref{eq:growth-2} can be restrictive since an evaluation of the full objective function is required. However, similar descent conditions also appeared in other globalization strategies for smooth problems, \cite{RKM2016I,XRKM2017,XuRooMah17,YaoXuRooMah18}. In the next section, we verify that Algorithm \ref{alg:ssn} using the proposed growth conditions \cref{eq:growth-1}--\cref{eq:growth-2} converges globally in expectation. Moreover, in \cref{theorem:conv-prox-strong}, we establish global convergence of Algorithm \ref{alg:ssn} without condition \cref{eq:growth-2} in a strongly convex setting. Under standard assumptions and if the sample sizes ${\bf n}^{\sf g}$ and ${\bf n}^{\sf h}$ are chosen sufficiently large, we can further show that the conditions \cref{eq:growth-1} and \cref{eq:growth-2} are satisfied locally in a neighborhood of a stationary point with high probability. This enables us to derive fast local convergence results in probability. Let us note that the growth conditions \cref{eq:growth-1}--\cref{eq:growth-2} are checked using a new sample mini-batch $s^{k+1}$. Thus, only one gradient evaluation is required per iteration if the Newton step is accepted. We note that the feasibility condition $\nstep \in \dom~r$ can be circumvented by setting $x^{k+1} = \mathcal P_{\dom~r}(\nstep)$, where $\mathcal P_{\dom~r}$ denotes the projection onto the set $\dom~r$. We refer to \cref{remark:proj-loc} for a related discussion. 

Let us mention that an alternative globalization is analyzed in \cite{PatSteBem14,SteThePat17} where the authors propose the so-called forward-backward envelope (FBE) as a smooth merit function for problem \cref{eq:prob}. Since this framework requires an additional proximal gradient step (and thus, an additional gradient evaluation) after each iteration, we do not consider this approach here.


\section{Global Convergence} \label{sec:global} 
In this section, we analyze the global convergence behavior of Algorithm \ref{alg:ssn}. We first present and summarize our main assumptions.


\subsection{Assumptions} \label{sec:global-assumption}
Throughout this paper, we assume that $f : \Rn \to \R$ is continuously differentiable on $\Rn$ and $r : \Rn \to \Rex$ is convex, lower semicontinuous, and proper. As already mentioned, we also assume that the oracles $\cG, \cH : \Rn \times \Xi \to \R$ are Carath\'{e}odory functions.
In the following, we further specify the assumptions on the functions $f$ and $r$. 

\begin{assumption} \label{ass:lip} Let $f : \Rn \to \R$ be given as in \cref{eq:prob}. We assume: \vspace{.5ex}
\begin{itemize}
\item[{\rm(A.1)}] The gradient mapping $\nabla f$ is Lipschitz continuous on $\Rn$ with modulus $L > 0$. \vspace{.5ex} 
\item[{\rm(A.2)}] The objective function $\psi$ is bounded from below on $\dom~r$. \vspace{.5ex}
\item[{\rm(A.3)}] There exist parameters $\mu_f \in \R$ and $\mu_r \geq 0$ with $\bar\mu := \mu_f + \mu_r > 0$ such that the shifted functions $f - \frac{\mu_f}{2}\|\cdot\|^2$ and $r - \frac{\mu_r}{2}\|\cdot\|^2$ are convex. \vspace{.5ex}
\item[{\rm(A.4)}] There exists a constant $\bar g_r > 0$ such that for all $x \in \dom~r$ there exists $\lambda \in \partial r(x)$ with $\|\lambda\| \leq \bar g_r$.
\end{itemize}
\end{assumption}

Assumption (A.3) implies that the function $\psi$ is strongly convex with convexity parameter $\bar\mu$. Furthermore, if both assumption (A.1) and (A.3) are satisfied, then the parameter $\mu_f$ is bounded by the Lipschitz constant $L$, i.e., we have $|\mu_f| \leq L$. The assumptions (A.3)--(A.4) are only required for a variant of Algorithm \ref{alg:ssn} that uses a modified globalization strategy, see \cref{theorem:conv-prox-strong}. A concrete example for $r$ that satisfies (A.4) is given in \cref{remark:example-c3}. 
%
%
We continue with the assumptions on the parameters used within our algorithmic framework.

\begin{assumption}
Let $(\Lambda_k)_k \subset \Spp$ be a family of symmetric, positive definite parameter matrices and let $(\nu_k)_k$, $(\veps_k^1)_k$, and $(\veps_k^2)_k$ be given sequences. Then, for some given parameter $p \in (0,1)$ we assume: \vspace{.5ex}
\begin{itemize}
\item[{\rm(B.1)}] There exist $0 < \lambda_m \leq \lambda_M < \infty$ such that $\lambda_M I \succeq \Lambda_k \succeq \lambda_m I$ for all $k \in \N$. \vspace{0.5ex}
\item[{\rm(B.2)}] It holds $(\nu_k)_k$, $(\veps_k^2)_k \in \ell_+^1$, and $(\veps_k^1)_k \in \ell_+^p$. 
\end{itemize}
\end{assumption} 

In the following sections, we study the convergence properties of the stochastic process $(x^k)_k$ generated by Algorithm \ref{alg:ssn} with respect to the filtrations
\[ \mathcal F_k := \sigma(s^0, \ldots, s^k, t^0, \ldots, t^k), \quad \text{and} \quad \hat {\mathcal F}_k := \sigma(s^0, \ldots, s^k,s^{k+1}, t^0, \ldots, t^k). \]
The filtration $\mathcal F_k$ represents the information that is collected up to iteration $k$ and that is used to compute the trial point $\nstep$ or a proximal gradient step
\[ \pstep := x^k + \alpha_k v^k = x^k - \alpha_k \Fsubk(x^k). \] 
The filtration $\hat {\mathcal F}_k$ has a similar interpretation, but it also contains the information produced by deciding whether the Newton step $\nstep$ should be accepted or rejected, i.e., it holds $\hat {\mathcal F}_k = \sigma({\mathcal F}_k \cup \sigma(s^{k+1}))$. The filtrations $\{\mathcal F_k, \hat{\mathcal F}_k\}$ naturally describe the aggregation of information generated by Algorithm \ref{alg:ssn}. We will work with the following stochastic conditions. 

\begin{assumption}
We assume:
\begin{itemize} 
\item[{\rm(C.1)}] For all $k \in \N_0$, the generalized derivative $M_k$, chosen in step \ref{alg:SSN2} of Algorithm \ref{alg:ssn} , is an $\mathcal F_k$-measurable mapping, i.e., the function $M_k : \Omega \to \R^{n \times n}$ is an $\mathcal F_k$-measurable selection of the multifunction ${\mathcal M}_k : \Omega \rightrightarrows \R^{n \times n}$, ${\mathcal M}_k(\omega)  := \mathcal M^{\Lambda_k}_{s^k(\omega),t^k(\omega)}(x^k(\omega))$.
\item[{\rm(C.2)}] The variance of the individual stochastic gradients is bounded, i.e., for all $k \in \N$ there exists $\sigma_k \geq 0$ such that
\end{itemize}
\[ {\mathds E}[ \|\nabla f(x^k) - \gfsubk(x^k)\|^2] \leq \sigma_k^2.  \]
%
\end{assumption}

The second condition is common in stochastic programming, see, e.g., \cite{GL2013,XuYin15,BBN2016,BHNS2016,GhaLanZha16,WMGL2017}. Since the generalized derivative $M_k$ is generated iteratively and depends on the random process $(x^k)_k$ and on the mini-batches $(s^k)_k$, $(t^k)_k$, condition (C.1) is required to guarantee that the selected matrices $M_k$ actually define $\mathcal F_k$-measurable random operators. A similar assumption was also used in \cite{WMGL2017}. Furthermore, applying the techniques and theoretical results presented in \cite{Ulb02,Ulb11} for infinite-dimensional nonsmooth operator equations, we can ensure that the multifunction ${\mathcal M}_k$ admits at least one measurable selection $M_k : \Omega \to \R^{n \times n}$. We discuss this important observation together with a proof of \cref{fact:one} in \cref{sec:app-fact}. Let us note that it is also possible to generalize the assumptions and allow $\mathcal F_k$-measurable parameter matrices $\Lambda_k$. However, in order to simplify our analysis, we focus on a deterministic choice of $(\Lambda_k)_k$ and do not consider this extension here.  

As a consequence of condition (C.1) and of the assumptions on $f$, we can infer that the random processes $(\nstep)_k$, $(\pstep)_k$, and $(x^k)_k$ are adapted to the filtrations $\mathcal F_k$ and $\hat {\mathcal F}_k$. 
\begin{fact} \label{fact:one} Under assumption {\rm(C.1)}, it holds $\nstep, \pstep \in \mathcal F_k$ and $x^{k+1} \in \hat {\mathcal F}_k$ for all $k \in \N_0$. 
\end{fact}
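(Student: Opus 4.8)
The plan is to argue by a simultaneous induction on $k$, proving that $x^k$ and $\theta_k$ are $\mathcal F_k$-measurable, that $\nstep$ and $\pstep$ are $\mathcal F_k$-measurable, and that $x^{k+1}$ and $\theta_{k+1}$ are $\hat{\mathcal F}_k$-measurable. The base case is immediate, since $x^0$ and $\theta_0$ are deterministic and hence measurable with respect to every $\sigma$-algebra. For the inductive step I would first record the reduction $x^k \in \mathcal F_k$: by the previous stage of the induction $x^k \in \hat{\mathcal F}_{k-1} = \sigma(s^0,\ldots,s^k,t^0,\ldots,t^{k-1})$, and since all generators of $\hat{\mathcal F}_{k-1}$ appear among those of $\mathcal F_k$ we have $\hat{\mathcal F}_{k-1} \subseteq \mathcal F_k$, whence $x^k \in \mathcal F_k$. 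The same argument gives $\theta_k \in \hat{\mathcal F}_{k-1} \subseteq \mathcal F_k$.

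Next I would establish measurability of the stochastic residual. Because $\cG$ is a Carath\'eodory function and both $x^k$ and the samples $s^k_i$ are $\mathcal F_k$-measurable, each composition $\omega \mapsto \cG(x^k(\omega),s^k_i(\omega))$ is $\mathcal F_k$-measurable by the standard fact that a Carath\'eodory map composed with measurable selections is measurable; averaging yields $\gfsubk(x^k) \in \mathcal F_k$. Since $\proxt{\Lambda_k}{r}$ is Lipschitz (hence continuous), the residual $\Fsubk(x^k) = x^k - \proxt{\Lambda_k}{r}(\usubk(x^k))$ is $\mathcal F_k$-measurable, and therefore so is $\pstep = x^k - \alpha_k \Fsubk(x^k)$. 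For the Newton direction, $M_k \in \mathcal F_k$ by assumption (C.1); on the event that $M_k$ is invertible, $\omega \mapsto M_k(\omega)^{-1}$ is measurable because matrix inversion is continuous on the open set of nonsingular matrices (entrywise rational via Cramer's rule), so $d^k = -M_k^{-1}\Fsubk(x^k)$ and $\nstep = x^k + d^k$ are $\mathcal F_k$-measurable.

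It then remains to handle the branching in steps \ref{alg:SSN4}--\ref{alg:SSN5}. The feasibility event $\{\nstep \in \dom r\}$ is $\mathcal F_k$-measurable, since $r$ is lower semicontinuous and hence $\dom r = \bigcup_{n\in\N}\{x : r(x)\le n\}$ is an $F_\sigma$ set, so it is Borel. The two growth conditions \cref{eq:growth-1}--\cref{eq:growth-2} are evaluated through $\Fsubkk(\nstep) = F^{\Lambda_{k+1}}_{s^{k+1}}(\nstep)$, which involves the fresh batch $s^{k+1}$; the same Carath\'eodory-plus-continuity reasoning shows this quantity is $\hat{\mathcal F}_k$-measurable. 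As $\psi = f + r$ is Borel measurable ($f$ continuous, $r$ lower semicontinuous) and $\theta_k \in \mathcal F_k \subseteq \hat{\mathcal F}_k$, the acceptance set $A_k$ (the intersection of feasibility with the two inequalities) is $\hat{\mathcal F}_k$-measurable. Writing $x^{k+1} = \mathds 1_{A_k}\nstep + \mathds 1_{A_k^c}\pstep$ and $\theta_{k+1} = \mathds 1_{A_k}\|\Fsubkk(\nstep)\| + \mathds 1_{A_k^c}\theta_k$ then exhibits both as $\hat{\mathcal F}_k$-measurable, closing the induction.

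The main obstacle, and the structural reason the conclusion necessarily passes from $\mathcal F_k$ to $\hat{\mathcal F}_k$, is precisely this acceptance mechanism: because the growth conditions are checked against the new sample $s^{k+1}$, the decision whether $x^{k+1}$ equals $\nstep$ or $\pstep$ cannot be resolved within $\mathcal F_k$ and genuinely requires $\sigma(\mathcal F_k \cup \sigma(s^{k+1})) = \hat{\mathcal F}_k$. A secondary technical point is the measurability of the Newton step: one either assumes $M_k$ invertible (standard for a Newton iteration) or, if singularity is not excluded a priori, selects a solution of the linear system measurably, for instance via the Moore--Penrose pseudoinverse, which is again measurable in $M_k$. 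Keeping the parallel induction for $\theta_k$ synchronized with the branching is the remaining bookkeeping requirement.
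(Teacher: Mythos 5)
Your proof is correct, and its induction skeleton (carrying $x^k$ and $\theta_k$ along, with $\nstep,\pstep \in \mathcal F_k$ and the step from $\mathcal F_k$ to $\hat{\mathcal F}_k$ forced by the acceptance test against the fresh batch $s^{k+1}$) coincides with the paper's extended inductive claim; where you genuinely diverge is in how you certify measurability of the acceptance event. The paper encodes acceptance as ${\sf Y}_{k+1} = \mathds 1_{{\sf S}_k}(\nstep,\Fsubkk(\nstep))$ for a \emph{random set} ${\sf S}_k$, which forces a joint-measurability analysis: \cref{lemma:app-meas-1} reduces measurability of the indicator to graph measurability of ${\sf S}_k$, and \cref{lemma:app-meas-2} establishes the latter for the sublevel multifunction involving $\psi$ by writing the lower semicontinuous convex $r$ as the supremum of a countable dense family of affine minorants and invoking (weak) measurability of multifunctions and their closures. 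You bypass the random-set formalism entirely: since $\nstep \in \mathcal F_k$ and $\Fsubkk(\nstep) \in \hat{\mathcal F}_k$ are already measurable random vectors, the feasibility condition and \cref{eq:growth-1}--\cref{eq:growth-2} are simply inequalities between $\hat{\mathcal F}_k$-measurable (extended-real) random variables, using that $\psi = f+r$ is Borel ($f$ continuous, $r$ lower semicontinuous) and that $\dom~r$ is an $F_\sigma$ set; the acceptance event is then a finite intersection of preimages of Borel sets. This is shorter and equally rigorous for the fact at hand; what the paper's heavier machinery buys is a reusable toolbox for the multifunction setting, which the authors also need to show via the Kuratowski--Ryll-Nardzewski theorem that $\mathcal M_k$ admits measurable selections, i.e., that assumption (C.1) is not vacuous. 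One point you assert rather than prove: measurability of $\omega \mapsto M_k(\omega)^+ \Fsubk(x^k)(\omega)$ in the possibly singular case. Be aware that this does not follow from any continuity argument, since $A \mapsto A^+$ is discontinuous wherever the rank drops; the paper's \cref{lemma:app-penrose} handles it by exhibiting $T^+$ as the pointwise limit of the continuous maps $T \mapsto (T^\top T + \lambda_k I)^{-1}T^\top$ as $\lambda_k \to 0$. Since the asserted measurability is a correct standard fact, this is a justification gap rather than a logical one.
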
 
Since the choice of the iterate $x^{k+1}$ depends on the various criteria, the properties stated in \cref{fact:one} are not immediately obvious. In particular, we need to verify that the decision of accepting or rejecting the stochastic semismooth Newton step $\nstep$ is an $\hat{\mathcal F}_k$-measurable action. A proof of \cref{fact:one} is presented in \cref{sec:app-fact}.


\subsection{Properties of $F^\Lambda$} 
In this subsection, we discuss several useful properties of the nonsmooth function $\FopL$ and of its stochastic version $\Fsub$. The next statement shows that $\|\Fsub(x) \|$ does not grow too much when the parameter matrix $\Lambda$ changes. This result was first established by Tseng and Yun in \cite{TseYun09}. 

\begin{lemma} \label{lemma:F-bound} Let $\Lambda_1, \Lambda_2 \in \Spp$ be two arbitrary matrices. Then, for all $x \in \Rn$, for all samples $s$, and for $W := \Lambda_2^{-\half}\Lambda_1\Lambda_2^{-\half}$, it follows
\[ \|F^{\Lambda_1}_s(x)\| \leq \frac{1 + \ewmax(W) + \sqrt{1-2\ewmin(W)+\ewmax(W)^2}}{2} \frac{\ewmax(\Lambda_2)}{\ewmin(\Lambda_1)} \|F^{\Lambda_2}_s(x)\|. \] 
\end{lemma}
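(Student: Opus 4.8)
My plan is to compress the two proximal steps into a single monotone variational inequality and then reduce it, via a congruence transformation, to a scalar quadratic inequality in the ratio of the two residual norms. Throughout I fix $x$ and the sample $s$, abbreviate $g := \gfsub(x)$, and set $F_i := F^{\Lambda_i}_s(x)$ and $p_i := \proxt{\Lambda_i}{r}(x - \Lambda_i^{-1}g) = x - F_i$ for $i \in \{1,2\}$.

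First I would invoke the optimality characterization \cref{eq:prox:sub-prox} of the proximity operator. Since $p_i = \proxt{\Lambda_i}{r}(x - \Lambda_i^{-1}g)$, it satisfies $\Lambda_i(x - \Lambda_i^{-1}g - p_i) \in \partial r(p_i)$, that is $\Lambda_i F_i - g \in \partial r(p_i)$. Because $r$ is convex, $\partial r$ is monotone, so pairing the two inclusions against $p_1 - p_2 = F_2 - F_1$ gives $\iprod{(\Lambda_1 F_1 - g)-(\Lambda_2 F_2 - g)}{p_1 - p_2}\ge 0$, which rearranges to the key inequality $\iprod{\Lambda_1 F_1 - \Lambda_2 F_2}{F_1 - F_2}\le 0$. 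This is the only step that uses convexity and the structure of $\proxt{\Lambda}{r}$; note that $g$ cancels, so the resulting bound holds uniformly in the sample $s$.

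Next I would rescale by $\Lambda_2$. With $W := \Lambda_2^{-\half}\Lambda_1\Lambda_2^{-\half} \in \Spp$ and $z_i := \Lambda_2^{\half}F_i$, the key inequality becomes $\iprod{Wz_1}{z_1} + \|z_2\|^2 \le \iprod{(W+I)z_1}{z_2}$. Bounding the right-hand side by Cauchy--Schwarz together with the spectral estimates $\ewmin(W)\|z_1\|^2 \le \iprod{Wz_1}{z_1}$ and $\|(W+I)z_1\| \le (1+\ewmax(W))\|z_1\|$, and crucially retaining (rather than discarding) the term $\|z_2\|^2$, I would reduce this to a homogeneous quadratic inequality in $\|z_1\|$ and $\|z_2\|$. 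Solving for $\|z_1\|/\|z_2\|$ and simplifying the discriminant to $1 - 2\ewmin(W) + \ewmax(W)^2$ produces the factor $\tfrac12(1 + \ewmax(W) + \sqrt{1 - 2\ewmin(W)+\ewmax(W)^2})$. Converting back to the Euclidean norm by lower-bounding $\iprod{\Lambda_1 F_1}{F_1}\ge\ewmin(\Lambda_1)\|F_1\|^2$ and using $\|z_i\| \le \ewmax(\Lambda_2)^{\half}\|F_i\|$ then introduces the remaining factor $\ewmax(\Lambda_2)/\ewmin(\Lambda_1)$ and yields the claim.

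I expect the main obstacle to be the spectral bookkeeping in the last two steps: one must choose precisely which terms to estimate via Cauchy--Schwarz and which to keep, and how to split the congruence factors between the quadratic and the norm conversion, so that the discriminant collapses to exactly $1 - 2\ewmin(W)+\ewmax(W)^2$ and the eigenvalue prefactor comes out as $\ewmax(\Lambda_2)/\ewmin(\Lambda_1)$ rather than a weaker constant (such as $1 + \ewmax(W)$ in place of the bracketed factor, or $\sqrt{\ewmax(\Lambda_2)/\ewmin(\Lambda_1)}$ in place of the ratio). By contrast, the variational-inequality and congruence steps are routine.
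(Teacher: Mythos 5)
Your opening moves coincide exactly with the argument the paper points to (it does not reprove the lemma but cites \cite[Lemma~3]{TseYun09} verbatim): the inclusions $\Lambda_i F_i - g \in \partial r(p_i)$ from \cref{eq:prox:sub-prox}, the monotonicity of $\partial r$ giving $\iprod{\Lambda_1 F_1 - \Lambda_2 F_2}{F_1 - F_2} \le 0$ (with $g$ cancelling, which is indeed why the bound is uniform in the sample $s$), and the congruence $z_i := \Lambda_2^{\half}F_i$ yielding $\iprod{Wz_1}{z_1} + \|z_2\|^2 \le \iprod{(W+I)z_1}{z_2}$ are all correct and are the heart of the Tseng--Yun proof.

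The gap is in the finish, precisely at the spot you flagged as risky. If you first apply \emph{both} spectral estimates $\iprod{Wz_1}{z_1} \ge \ewmin(W)\|z_1\|^2$ and $\|(W+I)z_1\| \le (1+\ewmax(W))\|z_1\|$ and only then solve, the quadratic in $t = \|z_1\|/\|z_2\|$ is $\ewmin(W)t^2 - (1+\ewmax(W))t + 1 \le 0$, whose discriminant is $(1+\ewmax(W))^2 - 4\ewmin(W) = 1 - 2\ewmin(W) + \ewmax(W)^2 + 2(\ewmax(W)-\ewmin(W))$; it does \emph{not} simplify to $1 - 2\ewmin(W) + \ewmax(W)^2$ unless $W$ is a multiple of the identity, and the root carries $2\ewmin(W)$, not $2$, in the denominator, leaving after conversion a spurious factor of order $\kappa(\Lambda_2)^{\half}$. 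Your conversion step is also inconsistent with this setup: the term $\iprod{\Lambda_1 F_1}{F_1} = \iprod{Wz_1}{z_1}$ appears only once, so you cannot spend it both on $\ewmin(W)$ in the quadratic and on $\ewmin(\Lambda_1)$ afterwards. The correct order of operations in \cite{TseYun09} is to keep the coefficients exact and read the inequality as a quadratic in $\|z_2\|$,
\[ \|z_2\|^2 - \|(W+I)z_1\|\,\|z_2\| + \iprod{Wz_1}{z_1} \le 0, \]
whose discriminant collapses by the algebraic identity $(W+I)^2 - 4W = (W-I)^2$, i.e.\ $\|(W+I)z_1\|^2 - 4\iprod{Wz_1}{z_1} = \|(W-I)z_1\|^2$ exactly; the lower root then gives
\[ \|z_2\| \;\ge\; \frac{2\iprod{Wz_1}{z_1}}{\|(W+I)z_1\| + \|(W-I)z_1\|}. \]
Only now estimate: $\iprod{Wz_1}{z_1} = \iprod{\Lambda_1 F_1}{F_1} \ge \ewmin(\Lambda_1)\|F_1\|^2$ (bypassing $\ewmin(W)$ entirely), $\|(W+I)z_1\| + \|(W-I)z_1\| \le \bigl(1+\ewmax(W) + \sqrt{1-2\ewmin(W)+\ewmax(W)^2}\bigr)\|z_1\|$ using $(I-W)^2 \preceq (1-2\ewmin(W)+\ewmax(W)^2)I$, and $\|z_i\| \le \ewmax(\Lambda_2)^{\half}\|F_i\|$ for $i=1,2$; combining yields exactly the stated constant with prefactor $\ewmax(\Lambda_2)/\ewmin(\Lambda_1)$. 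In other words, the two conversions you listed at the end are the right ones, but they belong to this non-homogeneous quadratic in $\|z_2\|$, not to the homogeneous quadratic in the ratio that your step~3 sets up; as written, your plan proves a valid but genuinely weaker inequality rather than the lemma.
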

\begin{proof} The proof is identical to the proof of \cite[Lemma 3]{TseYun09} and will be omitted here. \end{proof}  
\begin{remark} \label{remark:ada-lambda} Let $\Lambda \in \Spp$ be given and let $(\Lambda_k)_k \subset \Spp$ be a family of symmetric, positive definite matrices satisfying assumption {\rm(B.1)}. 
Then, it easily follows
\[ \scalebox{0.9}{$\displaystyle\frac{\ewmax(\Lambda)}{\lambda_m}$} I \succeq \Lambda_k^{-\half}\Lambda\Lambda_k^{-\half}  \succeq \scalebox{0.9}{$\displaystyle\frac{\ewmin(\Lambda)}{\lambda_M}$} I \quad \text{and} \quad \scalebox{0.9}{$\displaystyle\frac{\lambda_M}{\ewmin(\Lambda)}$} I \succeq \Lambda^{-\half}\Lambda_k\Lambda^{-\half} \succeq \scalebox{0.9}{$\displaystyle\frac{\lambda_m}{\ewmax(\Lambda)}$} I, \]
for all $k \in \N$, and, due to \cref{lemma:F-bound}, we obtain the following bounds
\be \label{eq:new:misc3} \unl \cdot \| \Fsub(x)\| \leq \|F^{\Lambda_k}_s(x)\| \leq \ovl \cdot \|\Fsub(x)\|, \quad \forall~k \in \N, \ee
and for all mini-batches $s$, $x \in \Rn$. The constants $\unl$, $\ovl > 0$ do not depend on $k$, $\Lambda_k$, or $s$. Thus, 
the latter inequalities imply:
\[ F^\Lambda(x^k) \to 0 \quad \iff \quad F^{\Lambda_k}(x^k) \to 0, \quad k \to \infty. \] 
As indicated in the last section, this can be used in the design of our algorithm. In particular, adaptive schemes 
or other techniques can be applied to update $\Lambda$.
\end{remark}

The following result is a simple extension of \cite[Theorem 4]{TseYun09}; see also \cite[Lemma 3.7]{XZ2014} and \cite{XuYin15} for comparison.

\begin{lemma} \label{lemma:str-conv} Suppose that the assumptions {\rm(A.1)}, {\rm(A.3)} are satisfied and let $\Lambda \in \Spp$ be given with $\lamM I \succeq \Lambda \succeq \lamm I $. Furthermore, let $x^*$ denote the unique solution of the problem $\min_x \psi(x)$ and for any $\tau > 0$ let us set
\[ b_1 := L - 2\lamm - \mu_r, \;\, b_2 := \frac{(\lamM + \mu_r)^2}{\bar\mu}, \;\, B_1(\tau) := \frac{1+\tau}{\bar\mu}(\sqrt{b_1 + b_2 + \tau} + \sqrt{b_2})^2 \] 
Then, there exists some positive constant $B_2(\tau)$ that only depends on $\tau$ such that
\be \label{eq:str-conv-prop2} \|x - x^*\|^2 \leq  B_1(\tau) \cdot  \|\Fsub(x)\|^2 + B_2(\tau) \cdot \|\nabla f(x) - \gfsub(x) \|^2, \ee
for all $x \in \Rn$ and for every sample $s$. If the full gradient is used, the term $\|\nabla f(x) - \gfsub(x) \|$ vanishes for all $x$ and \cref{eq:str-conv-prop2} holds with $B_1(\tau) \equiv B_1(0)$.
\end{lemma}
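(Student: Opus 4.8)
The plan is to reduce the error bound to a scalar quadratic inequality in the three quantities $\|x-x^*\|$, $\|\Fsub(x)\|$, and the gradient error $\|\nabla f(x)-\gfsub(x)\|$, and then to solve it. Throughout, I abbreviate $d := \Fsub(x)$, so that $p := \proxt{\Lambda}{r}(x-\Lambda^{-1}\gfsub(x)) = x - d$, and set $e := x-x^*$, $q := p - x^* = e - d$, and $g := \nabla f(x) - \gfsub(x)$. First I would extract subgradient information at the two relevant points. By the prox characterization \cref{eq:prox:sub-prox}, the point $p$ satisfies $\Lambda d - \gfsub(x) \in \partial r(p)$, while the optimality of the (unique) minimizer $x^*$ gives $-\nabla f(x^*) \in \partial r(x^*)$ via \cref{eq:non-eq}.

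The core of the argument is to play the strong monotonicity of $\partial r$ against that of $\nabla f$. Applying $\mu_r$-strong monotonicity of $\partial r$ to the pair $(p,x^*)$ yields a lower bound of the form $\mu_r\|q\|^2$, while the convexity of $f-\frac{\mu_f}{2}\|\cdot\|^2$ from (A.3) supplies $\iprod{\nabla f(x)-\nabla f(x^*)}{e} \ge \mu_f\|e\|^2$. It is essential that this second estimate enter only in this convexity-minus-quadratic form, since $\mu_f$ may be negative and $\nabla f$ need not be monotone; only the combined modulus $\bar\mu = \mu_f + \mu_r > 0$ is available and must survive the combination. Substituting $\gfsub(x) = \nabla f(x) - g$, bounding $\|\nabla f(x)-\nabla f(x^*)\| \le L\|e\|$ by (A.1), and using the spectral bounds $\lamm I \preceq \Lambda \preceq \lamM I$ from (B.1) together with the reverse triangle inequality $\|q\| \ge \big|\,\|e\| - \|d\|\,\big|$, these two inequalities combine into a single quadratic estimate of the schematic form $\bar\mu\|e\|^2 \le c_1\|e\|\|d\| + c_2\|d\|^2 + (\text{mixed and quadratic terms in } \|g\|)$, where $c_1,c_2$ are assembled from $L$, $\mu_r$, $\lamm$, and $\lamM$.

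Finally I would linearize the gradient-error contribution with Young's inequality carrying the free parameter $\tau > 0$: splitting the cross term of order $\|e\|\|g\|$ produces the factors $(1+\tau)$ and $(1+1/\tau)$, which are exactly the origin of the prefactor $\tfrac{1+\tau}{\bar\mu}$ in $B_1(\tau)$ and of the $\tau$-dependence of $B_2(\tau)$. What remains is a scalar quadratic inequality in $\|e\|$ with coefficients governed by $b_1$ and $b_2$; solving it (equivalently, completing the square) bounds $\sqrt{\bar\mu}\,\|e\|$ by $(\sqrt{b_1+b_2+\tau}+\sqrt{b_2})\,\|d\|$ plus a multiple of $\|g\|$, and squaring delivers the stated constants $B_1(\tau)$ and $B_2(\tau)$. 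When the full gradient is used, $g = 0$, so all error terms drop, no Young splitting is needed, and one may let $\tau \to 0$ to recover $B_1(0)$. I expect the main obstacle to be the constant bookkeeping in these last two steps: the separate $\partial r$- and $\nabla f$-estimates must be combined so that the coefficients collapse precisely into $b_1 = L - 2\lamm - \mu_r$ and $b_2 = (\lamM + \mu_r)^2/\bar\mu$, which forces one to bound the mixed $\iprod{\Lambda d}{e}$, $\iprod{\nabla f(x)-\nabla f(x^*)}{d}$, and $\|d\|_\Lambda^2$ terms in a coordinated rather than independent way, exactly as in \cite[Theorem 4]{TseYun09}.
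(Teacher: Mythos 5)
There is a genuine gap, and it sits exactly where you flagged the risk: the ``coordinated bookkeeping'' cannot be carried out inside your operator-monotonicity framework, because that framework places the Lipschitz constant $L$ in the wrong coefficient. Your ingredients are the inclusions $\Lambda d - \gfsub(x) \in \partial r(p)$ and $-\nabla f(x^*) \in \partial r(x^*)$, paired through the $\mu_r$-strong monotonicity of $\partial r$, plus monotonicity of $\nabla f - \mu_f\,\mathrm{id}$. Adding these, the base-point mismatch between $p$ and $x$ leaves the term $\iprod{\nabla f(x)-\nabla f(x^*)}{d}$ (or, if you pair $\nabla f$ at $(p,x^*)$ instead, $\iprod{\nabla f(x)-\nabla f(p)}{q}$), and the only way you invoke (A.1) is via $\|\nabla f(x)-\nabla f(x^*)\| \leq L\|e\|$. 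Either way, $L$ lands in the cross term: your quadratic reads $\bar\mu\|e\|^2 \leq (\lamM + 2\mu_r + L)\|e\|\|d\| - (\lamm+\mu_r)\|d\|^2 + (\text{terms in } \|g\|)$. But the stated constants demand cross coefficient $\lamM + \mu_r$ (this is precisely what $b_2 = (\lamM+\mu_r)^2/\bar\mu$ encodes), with $L$ confined to the $\|d\|^2$ coefficient $b_1 = L - 2\lamm - \mu_r$. Solving your quadratic therefore yields a strictly worse constant whenever $L$ dominates: take $\mu_r = 0$, $\lamm = \lamM = \bar\mu = 1$, $L = 10$, full gradient; the lemma asserts $\|x-x^*\| \leq 4\|\Fsub(x)\|$ (since $B_1(0) = (\sqrt{8+1}+1)^2 = 16$), while your inequality gives at best $\|x-x^*\| \leq \frac{11+\sqrt{117}}{2}\|\Fsub(x)\| \approx 10.9\,\|\Fsub(x)\|$, and the variant pairing $\nabla f$ at $(p,x^*)$ combined with the lossy reverse triangle inequality $\|q\| \geq |\|e\|-\|d\||$ is worse still. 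So the proposal proves an estimate of the right \emph{shape}, but not the lemma as stated; the appeal to \cite[Theorem 4]{TseYun09} does not rescue it, since that proof is itself a function-value argument, not a monotonicity one.

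The paper's proof avoids the contamination by working with function values. It lower-bounds $\psi(y)$ using convexity of $f - \frac{\mu_f}{2}\|\cdot\|^2$ anchored at $x$ and $\mu_r$-strong convexity of $r$ anchored at $p = \psub(x)$ (via the same subgradient $\Lambda \Fsub(x) - \gfsub(x)$ you correctly identified); the mismatch between the two anchor points is absorbed by the descent lemma \cref{eq:lip-ineq} applied between $x$ and $p$, which costs only $\frac{L}{2}\|\Fsub(x)\|^2$ --- this is the step missing from your plan, and it is what keeps $L$ out of the cross term. Setting $y = x^*$ and using the global optimality $\psi(x^*) \leq \psi(\psub(x))$ then yields $\frac{\bar\mu}{2}\|e\|^2 \leq \half(L-\mu_r)\|d\|^2 - \|d\|_\Lambda^2 + \iprod{(\Lambda+\mu_r I)d + g}{e} - \iprod{g}{d}$, whose cross coefficient is exactly $\lamM + \mu_r$; two Young inequalities (your $\tau$, plus a second parameter $\alpha$ optimized to $\alpha = \sqrt{(b_1+b_2+\tau)/b_2}$) then produce exactly $B_1(\tau) = \frac{1+\tau}{\bar\mu}(\sqrt{b_1+b_2+\tau}+\sqrt{b_2})^2$ --- this is your ``solve the scalar quadratic'' step, applied to the correct quadratic. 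Your subgradient extraction, the careful handling of possibly negative $\mu_f$, and the $g = 0$, $\tau \to 0$ endgame are all sound; to repair the proof, replace the monotonicity pairing by this function-value comparison, or otherwise ensure the Lipschitz property of $\nabla f$ is only ever invoked over the step $d$ rather than over $x - x^*$.
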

\begin{proof} The proof of \cref{lemma:str-conv} and an explicit derivation of the constant $B_2(\tau)$ are presented in \cref{sec:app-1}. \end{proof}


\subsection{Convergence Analysis} 
In the following, we first verify that a stochastic proximal gradient step yields approximate $\psi$-descent whenever the step size $\alpha_k$ in step \ref{alg:SSN5}  of Algorithm \ref{alg:ssn} is chosen sufficiently small. We also give a bound for the step sizes $\alpha_k$. Let us note that similar results were shown in \cite{XuYin15,GhaLanZha16,GL2016} and that the proof of \cref{prop:prox-descent} mainly relies on the well-known descent lemma
\be \label{eq:lip-ineq} f(y) \leq f(x) + \iprod{\nabla f(x)}{y-x} + \frac{L}{2} \|y-x\|^2, \quad \forall~x,y \in \Rn, \ee  
which is a direct consequence of assumption {\rm(A.1)}.

\begin{lemma} \label{prop:prox-descent} Let $x \in \dom~r$ and $\Lambda \in \Spp$ be arbitrary and suppose that the conditions {\rm(A.1)} and {\rm(B.1)} (for $\Lambda$) are satisfied. Moreover, let $\gamma \in (0,1)$, $\rho \in (1,\gamma^{-1})$, and the mini-batch $s$ be given and set $\overline\alpha := 2(1-\gamma\rho)\lambda_m L^{-1}$. Then, for all $\alpha \in [0,\min\{1,\overline\alpha\}]$ it holds
\be \label{eq:est-prox-step} \psi(x + \alpha v) - \psi(x) \leq -\alpha \gamma \|v\|_{\Lambda}^2 + \frac{\alpha}{4\gamma(\rho-1)\lamm} \|\nabla f(x) - \gfsub(x)\|^2, \ee
where $v := - \Fsub(x)$.
\end{lemma}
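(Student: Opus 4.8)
The plan is to run the classical proximal-gradient descent estimate while carefully tracking the stochastic gradient error $e := \nabla f(x) - \gfsub(x)$, and to tune a single Young's-inequality constant so that the claimed threshold $\overline\alpha$ emerges exactly. First I would note that $v = -\Fsub(x) = \psub(x) - x$, so that the point $p := x + v = \proxt{\Lambda}{r}(\usub(x))$ lies in $\dom~r$. Applying the proximal characterization \cref{eq:prox:sub-prox} at the argument $\usub(x) = x - \Lambda^{-1}\gfsub(x)$ yields $p \in x - \Lambda^{-1}\gfsub(x) - \Lambda^{-1}\partial r(p)$, which rearranges to the subgradient inclusion $-\Lambda v - \gfsub(x) \in \partial r(p)$; I denote the corresponding subgradient by $\lambda_p$.

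Next I would split $\psi(x+\alpha v) - \psi(x)$ into its smooth and nonsmooth parts. For the smooth part, the descent lemma \cref{eq:lip-ineq} with $y = x + \alpha v$ gives $f(x+\alpha v) \le f(x) + \alpha\iprod{\nabla f(x)}{v} + \tfrac{L\alpha^2}{2}\|v\|^2$. For the nonsmooth part, since $\alpha \in [0,1]$ we may write $x + \alpha v = (1-\alpha)x + \alpha p$, so convexity of $r$ gives $r(x+\alpha v) - r(x) \le \alpha\,(r(p) - r(x))$; combining the subgradient inequality $r(x) \ge r(p) + \iprod{\lambda_p}{x-p}$ with $\lambda_p = -\Lambda v - \gfsub(x)$ then bounds $r(p)-r(x) \le -\|v\|_\Lambda^2 - \iprod{\gfsub(x)}{v}$. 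Adding the two estimates merges the terms $\iprod{\nabla f(x)}{v}$ and $-\iprod{\gfsub(x)}{v}$ into $\iprod{e}{v}$, leaving
\[
\psi(x+\alpha v) - \psi(x) \;\le\; \alpha\iprod{e}{v} + \tfrac{L\alpha^2}{2}\|v\|^2 - \alpha\|v\|_\Lambda^2 .
\]

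The crux --- and the only place requiring genuine care --- is the final bookkeeping. I would apply Young's inequality in the form $\alpha\iprod{e}{v} \le \tfrac{\alpha c}{2}\|v\|^2 + \tfrac{\alpha}{2c}\|e\|^2$ with the reverse-engineered choice $c = 2\gamma(\rho-1)\lamm$, which is strictly positive because $\rho > 1$; this is chosen precisely so that the error coefficient $\tfrac{\alpha}{2c} = \tfrac{\alpha}{4\gamma(\rho-1)\lamm}$ matches \cref{eq:est-prox-step} exactly. Bounding the remaining Euclidean terms via $\|v\|^2 \le \lamm^{-1}\|v\|_\Lambda^2$ from \rm{(B.1)} collects all $\|v\|_\Lambda^2$ contributions into $\alpha\bigl(\tfrac{c+L\alpha}{2\lamm}-1\bigr)\|v\|_\Lambda^2$. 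Substituting $c$ shows the bracketed factor is $\le -\gamma$ exactly when $L\alpha \le 2\lamm(1-\gamma\rho)$, i.e. when $\alpha \le \overline\alpha$, which produces the target term $-\alpha\gamma\|v\|_\Lambda^2$ and hence \cref{eq:est-prox-step} (the case $\alpha = 0$ being trivial, as every term carries a factor of $\alpha$). I do not anticipate a real obstacle here; the only subtlety is selecting $c$ so that both the error coefficient and the step-size threshold $\overline\alpha$ fall out simultaneously, together with the observation that the hypothesis $\rho \in (1,\gamma^{-1})$ is exactly what keeps both $c > 0$ and $\overline\alpha > 0$.
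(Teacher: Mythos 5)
Your proposal is correct and follows essentially the same route as the paper's proof: the descent lemma for $f$, convexity of $r$ to reduce to the full step $p = x+v$, the proximal characterization \cref{eq:prox:sub-prox} yielding $-\Lambda v - \gfsub(x) \in \partial r(p)$ and hence the bound $r(p)-r(x) \le -\|v\|_\Lambda^2 - \iprod{\gfsub(x)}{v}$, followed by Young's inequality with the same effective constant $\gamma(\rho-1)\lamm$. The only cosmetic difference is that you collect the quadratic terms in the $\|\cdot\|_\Lambda$-norm via $\|v\|^2 \le \lamm^{-1}\|v\|_\Lambda^2$, whereas the paper converts in the opposite direction and shows the residual $\|v\|^2$-coefficient $\alpha(\tfrac{1}{2}L\alpha - (1-\gamma\rho)\lamm)$ is nonpositive for $\alpha \le \overline\alpha$ --- these are equivalent bookkeeping choices.
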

\begin{proof} We first define $\Delta := \iprod{\gfsub(x)}{v} + r(x + v) - r(x)$. Then, applying the optimality condition of the proximity operator \cref{eq:prox:sub-prox}, it follows 
\[ \Delta \leq \iprod{\gfsub(x)}{v} + \iprod{-\Lambda v - \gfsub(x)}{v} = - \|v\|_{\Lambda}^2. \]
Using the descent lemma \cref{eq:lip-ineq}, the convexity of $r$, and Young's inequality, we now obtain
\begin{align*} \psi(x + \alpha v) - \psi(x) + \alpha \gamma \|v\|_{\Lambda}^2 & \\ & \hspace{-25ex}\leq  \alpha ( \iprod{\nabla f(x)}{v} + r(x + v) - r(x)) + \frac{L \alpha^2}{2} \|v\|^2 + \alpha \gamma \|v\|_{\Lambda}^2 \\ & \hspace{-25ex} \leq \frac{L \alpha^2}{2} \|v\|^2 - \alpha (1-\gamma) \|v\|^2_{\Lambda} + \alpha \iprod{\nabla f(x) - \gfsub(x)}{v} \\ &\hspace{-25ex} \leq \alpha \left ( \frac{1}{2}L\alpha - (1-\gamma\rho) \lambda_m \right) \|v\|^2 + \frac{\alpha}{4\gamma(\rho-1)\lamm} \|\nabla f(x) - \gfsub(x)\|^2.
\end{align*}
Since the first term is nonpositive for all $\alpha \leq \overline\alpha$, this establishes \cref{eq:est-prox-step}.
\end{proof}

In the special case $\lamm \geq L$ and $\rho \leq (2\gamma)^{-1}$, \cref{prop:prox-descent} implies that the approximate descent condition \cref{eq:est-prox-step} holds for all $\alpha \in [0,1]$. The next lemma is one of our key tools to analyze the stochastic behavior of the Newton iterates and to bound the associated residual terms $\|\Fsubkk(\nstep)\|$.
\begin{lemma} \label{lemma:gen-conv} Let $(Y_k)_k$ be an arbitrary binary sequence in $\{0,1\}$ and let $a_0 \geq 0$, $\eta \in (0,1),$ $p \in (0,1]$, and $(\nu_k)_k \in \ell^1_+$, $(\veps_k)_k \in \ell^p_+$ be given. Let the sequence $(a_k)_k$ be defined by
\[ a_{k+1} := (\eta + \nu_k)^{Y_k} a_k + Y_k \veps_k, \quad \forall~k \in \N_0. \]
Then, for all $R \geq 1$, $k \geq 0$, and all $q \in [p,1]$, it holds 
\[ a_{k+1} \leq C_\nu \left [ a_0 + \sum_{k=0}^\infty \veps_k \right ] \quad \text{and} \quad \sum_{k=0}^{R-1} Y_k a_{k+1}^q \leq  \frac{C_\nu^q}{1-\eta^q} \left [ (\eta a_0)^q + \sum_{k = 0}^{\infty} \veps_k^q \right ], \]
where $C_\nu := \exp\left(\eta^{-1}\sum_{i=0}^\infty \nu_i\right)$.
\end{lemma}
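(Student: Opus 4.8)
The plan is to prove the two claimed bounds separately, treating the recursion
\[ a_{k+1} = (\eta + \nu_k)^{Y_k} a_k + Y_k \veps_k \]
by carefully distinguishing the two possible values of the binary switch $Y_k$. The key structural observation is that when $Y_k = 0$ the recursion reduces to $a_{k+1} = a_k$ (the factor $(\eta+\nu_k)^0 = 1$ and the additive term vanishes), so $a_k$ is only modified on those indices where $Y_k = 1$, and on such indices it contracts by the factor $\eta + \nu_k < 1 + \nu_k$ (up to the additive perturbation $\veps_k$). This suggests unrolling the recursion along the subsequence of ``active'' indices and absorbing the $\nu_k$ factors into the constant $C_\nu$.

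For the \emph{first} (uniform boundedness) estimate, I would first bound the product $\prod_{i=0}^{k-1}(\eta+\nu_i)^{Y_i}$. Since $\eta < 1$ and $Y_i \in \{0,1\}$, each active factor satisfies $\eta + \nu_i \le 1 + \nu_i \le \exp(\nu_i)$, and each inactive factor equals $1$; hence the product over any range is bounded by $\exp(\sum_i \nu_i) \le \exp(\eta^{-1}\sum_i \nu_i) = C_\nu$ (the extra factor $\eta^{-1} \ge 1$ gives a slightly larger but still valid constant, and it is precisely the form needed for the second estimate). Solving the linear recursion explicitly gives
\[ a_{k+1} = \Big(\prod_{i=0}^{k}(\eta+\nu_i)^{Y_i}\Big) a_0 + \sum_{j=0}^{k} Y_j \veps_j \prod_{i=j+1}^{k}(\eta+\nu_i)^{Y_i}, \]
and bounding every product by $C_\nu$ yields $a_{k+1} \le C_\nu(a_0 + \sum_{j=0}^\infty \veps_j)$, which is the first claim.

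For the \emph{second} estimate, I would raise the recursion to the power $q \in [p,1]$ and sum only over active indices. The natural approach is to use subadditivity of $t \mapsto t^q$ for $q \le 1$, namely $(x+y)^q \le x^q + y^q$, to split $a_{k+1}^q$ whenever $Y_k = 1$ into a contracting part bounded by $(\eta+\nu_k)^q a_k^q$ and a forcing part $\veps_k^q$. One then applies the uniform bound $a_k \le C_\nu(a_0 + \sum\veps_j)$ on the first part — but more efficiently I would sum the telescoping-style inequality $\sum_{k} Y_k a_{k+1}^q \le \eta^q \sum_k Y_k a_k^q + \sum_k \veps_k^q$ (after absorbing the $\nu_k$ factors into $C_\nu^q$ via the product bound) and solve the resulting geometric-type inequality for $\sum Y_k a_{k+1}^q$, which produces the factor $(1-\eta^q)^{-1}$ and the term $(\eta a_0)^q$ coming from the very first application of the contraction to $a_0$.

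\textbf{Main obstacle.} The delicate point is the bookkeeping for the switches $Y_k$: because the summation index and the power $q$ interact with which terms actually contract, one must be careful that the geometric series argument uses $\eta^q$ (not $\eta$) as the effective ratio and that the constant $C_\nu^q$ correctly dominates all the accumulated $\nu_i$-products simultaneously across the whole sum, rather than index by index. Getting the appearance of $(\eta a_0)^q$ rather than $a_0^q$ — reflecting that $a_0$ is contracted at least once before entering the sum — requires tracking the first active index precisely; this is where I expect the calculation to need the most care, and handling the degenerate case where no $Y_k$ equals $1$ (so the sum is empty and the bound holds trivially) should be checked as well.
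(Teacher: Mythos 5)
Your first estimate follows the paper's proof essentially verbatim: unroll the recursion to the explicit representation $a_{k+1} = \{\prod_{i=0}^k(\eta+\nu_i)^{Y_i}\}a_0 + \sum_j \{\prod_{i=j+1}^k(\eta+\nu_i)^{Y_i}\}Y_j\veps_j + Y_k\veps_k$ and bound every product by $C_\nu$. One remark, though: your product bound $\eta+\nu_i \le 1+\nu_i \le e^{\nu_i}$ discards the contraction factor $\eta$ entirely, which is harmless for the uniform bound but useless for the second estimate. The paper instead keeps $\prod_{i=\ell}^k(\eta+\nu_i)^{Y_i} \le C_\nu\,\eta^{\sum_{i=\ell}^k Y_i}$, and it is this retained geometric factor, counted over the active indices via the sets $K_\ell = \{i : Y_i = 1\}$ (so that $\sum_{k=\ell}^{R-1} Y_k\, \eta^{q\sum_{i=\ell}^k Y_i} \le \sum_{j\ge 1}\eta^{jq}$), that produces the factor $(1-\eta^q)^{-1}$ in the paper's route, which applies subadditivity of $x\mapsto x^q$ directly to the unrolled formula and estimates the resulting double sum.

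For the second estimate you propose a genuinely different route: sum the one-step inequality over active indices and solve a fixed-point inequality for $S := \sum_k Y_k a_{k+1}^q$. This can be made to work and is arguably cleaner than the paper's double-sum bookkeeping, but as written it has a real gap: the displayed inequality $\sum_k Y_k a_{k+1}^q \le \eta^q \sum_k Y_k a_k^q + \sum_k \veps_k^q$ is false in general, since on an active step subadditivity only gives $a_{k+1}^q \le (\eta+\nu_k)^q a_k^q + \veps_k^q$ with $(\eta+\nu_k)^q > \eta^q$. If you absorb the $\nu_k$ termwise, say via $(\eta+\nu_k)^q \le \eta^q C_\nu^q$, the effective ratio of the solve becomes $\eta^q C_\nu^q$, which can exceed $1$ (nothing prevents $C_\nu = \exp(\eta^{-1}\sum\nu_i)$ from being larger than $\eta^{-1}$), and then the geometric argument collapses. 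You flag exactly this danger yourself ("simultaneously across the whole sum, rather than index by index") but do not say how to realize the global absorption. The fix is a cumulative rescaling: set $b_k := a_k \prod_{i<k} e^{-\nu_i Y_i/\eta}$. Since $(\eta+\nu_k)^q \le \eta^q e^{q\nu_k/\eta}$, the rescaled sequence obeys the clean contraction $b_{k+1}^q \le \eta^q b_k^q + \veps_k^q$ on active steps and is constant on inactive ones; constancy on inactive stretches yields $\sum_{k=0}^{R-1} Y_k b_k^q \le b_0^q + \sum_{k=0}^{R-1} Y_k b_{k+1}^q$ (and the first active step contracts $a_0$ once, which is precisely where $(\eta a_0)^q$ rather than $a_0^q$ appears, as you anticipated), so $(1-\eta^q)\sum_k Y_k b_{k+1}^q \le (\eta a_0)^q + \sum_k \veps_k^q$; multiplying back by $\prod_i e^{q\nu_i Y_i/\eta} \le C_\nu^q$ recovers exactly the claimed constant. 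With that change of variables your plan goes through; without it, the "solve the geometric-type inequality" step fails whenever $\eta^q C_\nu^q \ge 1$.
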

\begin{proof} Using an induction, we can derive an explicit representation for $a_{k+1}$
\be \label{eq:a-rec} a_{k+1} = \left \{ \prod_{i=0}^k (\eta + \nu_i)^{Y_i} \right \} a_0 + \sum_{j = 0}^{k-1} \left \{ \prod_{i=j+1}^k (\eta + \nu_i)^{Y_i} \right \} Y_j \veps_j + Y_k\veps_k, \quad \forall~k \geq 0. \ee
Next, using $Y_i \in \{0,1\}$, $i \in \N$, and $\log(1+\nu_i\eta^{-1}) \leq \nu_i\eta^{-1}$, we obtain the estimate
\be \label{eq:a-fac} \prod_{i=\ell}^k (\eta + \nu_i)^{Y_i} \leq \left \{ \prod_{i=\ell}^k \eta^{Y_i} \right \} \cdot \exp\left ( \eta^{-1} { \sum_{i=\ell}^k Y_i \nu_i } \right) \leq C_\nu \cdot \eta^{\sum_{i=\ell}^k Y_i}, \quad \ell \geq 0. \ee
The bound on $a_{k+1}$ now follows from \cref{eq:a-rec}, \cref{eq:a-fac}, and $\eta \leq 1$. Let us now define the set $K_\ell := \{i \in \{\ell,...,R-1\}: Y_i = 1\}$. Then, it holds 
\[ \sum_{k=\ell}^{R-1} Y_k \eta^{\sum_{i=\ell}^k qY_i} =  \sum_{k \in K_\ell} \eta^{\sum_{i \in K_\ell, i \leq k}q} =  \sum_{j=1}^{|K_\ell|} \eta^{jq} \leq  \sum_{k=\ell}^{R-1} \eta^{(k-\ell+1)q}, \quad \ell \in \{0,...,R-1\}. \]
Combining the last results and using the subadditivity of $x \mapsto x^q$, $q \in [p,1]$, we have
\begin{align*} \sum_{k=0}^{R-1} Y_k a_{k+1}^q & \leq C_\nu^q \sum_{k=0}^{R-1} Y_k \eta^{\sum_{i=0}^k qY_i} a_0^q + C_\nu^q \sum_{k=0}^{R-1} \sum_{j=0}^{k-1} \eta^{\sum_{i=j+1}^k qY_i} Y_k Y_j \veps_j^q+\sum_{k=0}^{R-1} Y_k\veps_k^q \\ & \leq  C_\nu^q \sum_{k=0}^{R-1} \eta^{q(k+1)} a_0^q + C_\nu^q \sum_{j=0}^{R-1} \left\{ \sum_{k=j}^{R-1} \eta^{\sum_{i=j}^k qY_i} Y_k \right \} \eta^{-qY_j} Y_j  \veps_j^q 
\\ & \leq \frac{C_\nu^q}{1-\eta^q} \cdot (\eta a_0)^q + C_\nu^q \sum_{j=0}^{R-1} \left\{ \sum_{k=j}^{R-1} \eta^{q(k-j)} \right \} \veps_j^q  \leq \frac{C_\nu^q}{1-\eta^q} \left [(\eta a_0)^q + \sum_{j=0}^{\infty} \veps_j^q \right ] \end{align*}
as desired. Let us also note that the inclusion $\ell^q_+ \subset \ell^p_+$ is used in the last step. 
\end{proof}

We are now in the position to establish global convergence of Algorithm \ref{alg:ssn} in the sense that the expectation ${\mathds E}[\|\FopL(x^k)\|^2]$ converges to zero as $k \to \infty$. We first show convergence of Algorithm \ref{alg:ssn} under the conditions {\rm(C.1)}--{\rm(C.2)} and under the additional assumptions that the step sizes are diminishing and that the scaled stochastic error terms $\alpha_k \sigma_k^2$, $k \in \N$, are summable which is a common requirement in the analysis of stochastic methods for nonsmooth, nonconvex optimization, see, e.g., \cite{XuYin15,GhaLanZha16,GL2016}. 

Our basic idea is to show that both the proximal gradient and the semismooth Newton step yield approximate $\psi$-descent and that the error induced by gradient and Hessian sampling can be controlled in expectation. For a proximal gradient step this basically follows from \cref{prop:prox-descent}. For a Newton step, we combine the growth conditions \cref{eq:growth-1}--\cref{eq:growth-2} and \cref{lemma:gen-conv} to establish an estimate similar to \cref{eq:est-prox-step}. An analogous strategy was also used in \cite{Mil16,MilUlb14}. In our situation, however, a more careful discussion of the possible effects of the semismooth Newton steps is needed to cope with the stochastic situation. More specifically, since our convergence result is stated in expectation, all possible realizations of the random mini-batches $s^k$ and $t^k$, $k \in \N_0$, and their influence on the conditions \cref{eq:growth-1}--\cref{eq:growth-2} have to be considered. In order to apply \cref{lemma:gen-conv}, we now set up some preparatory definitions.  

Let $k \in \N_0$ be given and let us define ${\sf Q}_k : \Rn \to \R$,  ${\sf Q}_k(a) := \|a\| - (\eta+\nu_k) \theta_k$ and ${\sf P}_k : \Rn \times \Rn \to \R$, ${\sf P}_k(a,b) := \psi(a) - \beta \theta_k^{1-p} \|b\|^p - \psi(x^k)$, and
\[ {\sf S}_k := [\dom~r \times \Rn] \; \cap \; [\Rn \times \lev_{\veps_k^1}\,{\sf Q}_k] \; \cap \; {\mathrm{lev}}_{\veps_k^2}\,{\sf P}_k. \]
Then, setting ${\sf Y}_{k+1} := {\mathds 1}_{{\sf S}_k}(\nstep,\Fsubkk(\nstep))$, it holds
\[  {\sf Y}_{k+1} = \begin{cases} 1 & \text{if $\nstep = x^k +d^k$ is feasible and satisfies the conditions \cref{eq:growth-1} and \cref{eq:growth-2}}, \\ 0 & \text{otherwise} \end{cases} \]
and consequently, each iterate $x^{k+1}$ can be calculated as follows
\be \label{eq:def-xkk} \begin{aligned} x^{k+1} &= (1-{\sf Y}_{k+1}) \pstep + {\sf Y}_{k+1} \nstep \\ &= (1-{\sf Y}_{k+1})[ x^k - \alpha_k \Fsubk(x^k) ] + {\sf Y}_{k+1} [ x^k - M_k^+ \Fsubk(x^k) ].  \end{aligned} \ee
Here, the matrix $M_k^+$ denotes the \textit{Moore-Penrose inverse} of the generalized derivative $M_k$. Let us note that this compact representation of our iterative scheme turns out to be particularly useful in the proof of \cref{fact:one}, see \cref{sec:app-fact}. We also introduce the parameters ${\sf Z}_{k}$, $k \in \N_0$, which are defined recursively via
\[ {\sf Z}_0 := \theta_0 \in \R_+, \quad {\sf Z}_{k+1} := (\eta + \nu_k)^{{\sf Y}_{k+1}} {\sf Z}_k + {\sf Y}_{k+1} \veps_k^1. \]
By construction of Algorithm \ref{alg:ssn} and by induction, we have $\theta_k \leq {\sf Z}_k$ and thus,
\be \label{eq:new-fund-2} {\sf Y}_{k+1} \| \Fsubkk(\nstep)\| \leq {\sf Y}_{k+1}{\sf Z}_{k+1}, \quad \forall~k \in \N_0. \ee
Moreover, by identifying $Y_k \equiv {\sf Y}_{k+1}$ and $a_k \equiv {\sf Z}_{k}$, \cref{lemma:gen-conv} yields the following sample-independent and uniform bounds
\be \label{eq:new-fund-3}  \sum_{k=0}^{R-1} {\sf Y}_{k+1} {\sf Z}_{k+1}^q \leq \frac{C_\nu^q}{1-\eta^q} \left [ (\eta \theta_0)^q + \sum_{k=0}^\infty (\veps_k^1)^q \right ] =: C_{z}(q) < \infty \ee
and ${\sf Z}_{k+1} \leq C_\nu \left[ \theta_0 + \sum_{k=0}^\infty \veps_k^1 \right] =: C_z $ for all $k \geq 0$, $R \in \N$, and $q \in [p,1]$. We now state our main result of this section.

\begin{theorem} \label{theorem:conv-prox-v2} Let the sequence $(x^k)_k$ be generated by Algorithm \ref{alg:ssn}. Suppose that the assumptions {\rm(A.1)}--{\rm(A.2)}, {\rm(B.1)}--{\rm(B.2)}, and {\rm(C.1)}--{\rm(C.2)} are satisfied. Furthermore, suppose that the step sizes $\alpha_k \in [0,1]$, $k \in \N$, are chosen such that the approximate descent condition \cref{eq:est-prox-step} holds for some given $\gamma$ and $\rho$. 
Then, under the additional assumptions
\[ (\alpha_k)_k \, \text{ is monotonically decreasing}, \quad \sum~\alpha_k = \infty, \quad \sum~\alpha_k \sigma_k^2 < \infty, \] 
it holds $\liminf_{k \to \infty} {\mathds E}[\|F^\Lambda({{x}}^k)\|^2] = 0$ and $\liminf_{k \to \infty} F^\Lambda({x}^k) = 0$ a.s. for any $\Lambda \in \Spp$.
\end{theorem}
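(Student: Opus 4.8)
The plan is to derive a single one-step estimate for the change $\psi(x^{k+1}) - \psi(x^k)$ that covers both branches of Algorithm~\ref{alg:ssn} simultaneously through the indicator ${\sf Y}_{k+1}$, to sum it, telescope, and read off summability of a weighted residual sequence. On the event $\{{\sf Y}_{k+1} = 0\}$ the iterate is the damped proximal gradient step $\pstep$, and \cref{prop:prox-descent} (applicable since $\alpha_k \in [0,1]$ was chosen so that \cref{eq:est-prox-step} holds for the given $\gamma,\rho$) yields the genuine descent
\[ \psi(x^{k+1}) - \psi(x^k) \leq -\alpha_k \gamma \|\Fsubk(x^k)\|_{\Lambda_k}^2 + \frac{\alpha_k}{4\gamma(\rho-1)\lamm} \|\nabla f(x^k) - \gfsubk(x^k)\|^2. \]
On the event $\{{\sf Y}_{k+1} = 1\}$ the accepted step $\nstep$ satisfies \cref{eq:growth-2}, so it may \emph{increase} $\psi$ by at most $\beta \theta_k^{1-p}\|\Fsubkk(\nstep)\|^p + \veps_k^2$. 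Multiplying each branch by the corresponding indicator and adding gives a unified bound for $\psi(x^{k+1}) - \psi(x^k)$.

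The crucial --- and, I expect, the hardest --- step is to control the accumulated Newton ascent uniformly over \emph{all} realizations of the mini-batches. Here the machinery built around ${\sf Z}_k$ is decisive: since $\theta_k \leq {\sf Z}_k \leq C_z$ and, by \cref{eq:new-fund-2}, ${\sf Y}_{k+1}\|\Fsubkk(\nstep)\| \leq {\sf Y}_{k+1}{\sf Z}_{k+1}$, the bound \cref{eq:new-fund-3} applied with $q = p$ yields
\[ \sum_{k=0}^{R-1} {\sf Y}_{k+1}\,\beta \theta_k^{1-p}\|\Fsubkk(\nstep)\|^p \leq \beta C_z^{1-p} \sum_{k=0}^{R-1} {\sf Y}_{k+1}{\sf Z}_{k+1}^p \leq \beta C_z^{1-p} C_z(p), \]
a \emph{deterministic} constant; together with $(\veps_k^2)_k \in \ell^1_+$ the total Newton ascent is bounded by a sample-independent $C_N < \infty$. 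Summing the one-step estimate from $k = 0$ to $R-1$, using that $\psi$ is bounded below on $\dom~r$ by (A.2) and that the iterates remain feasible, taking total expectations, and invoking (C.2) together with $\sum \alpha_k \sigma_k^2 < \infty$ to absorb the gradient-error term, I obtain after letting $R \to \infty$ that
\[ \gamma \sum_{k=0}^{\infty} \alpha_k\, \Exp\big[(1-{\sf Y}_{k+1})\|\Fsubk(x^k)\|_{\Lambda_k}^2\big] < \infty. \]
Applying \cref{eq:new-fund-3} with $q = 1$ (and ${\sf Z}_{k+1} \leq C_z$) additionally shows $\sum_k \Exp[{\sf Y}_{k+1}\|\Fsubkk(\nstep)\|^2] < \infty$, i.e. the residuals recorded at accepted Newton iterates are square-summable.

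To reach the conclusion I would use $\sum \alpha_k = \infty$ together with $\|\cdot\|_{\Lambda_k}^2 \geq \lamm \|\cdot\|^2$ to extract a subsequence along which the sampled residual tends to zero (combining the proximal and Newton contributions as discussed below), and then transfer this to the deterministic residual. The transfer rests on two earlier facts: the $1$-Lipschitz property of the proximity operator gives $\|\FopLk(x^k) - \Fsubk(x^k)\| \leq \lamm^{-1}\|\nabla f(x^k) - \gfsubk(x^k)\|$, whose squared expectation is $\leq \lamm^{-2}\sigma_k^2$ by (C.2), while the norm equivalence \cref{eq:new:misc3} of \cref{remark:ada-lambda} lets me replace $\FopLk$ by $\FopL$. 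Since $\sum \alpha_k \sigma_k^2 < \infty$ forces $\liminf_k \sigma_k = 0$, passing to a common subsequence on which both the sampled residual and $\sigma_k$ vanish yields $\liminf_k \Exp[\|\FopL(x^k)\|^2] = 0$. The almost sure statement follows the same route pathwise: the non-negative series above have finite expectation, hence are finite almost surely, and on the resulting event of full probability one distinguishes whether only finitely or infinitely many Newton steps occur and applies the corresponding summable family to produce a subsequence with $\FopL(x^k) \to 0$.

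The main difficulty is structural rather than computational. Because the acceptance test for $\nstep$ is decided only after the fresh sample $s^{k+1}$ has been drawn, the indicator ${\sf Y}_{k+1}$ is only $\hat{\mathcal F}_k$-measurable (cf.\ \cref{fact:one}), and a Newton step may raise $\psi$; the accumulated ascent must therefore be bounded \emph{uniformly in the sample path}, which is exactly what the recursion for ${\sf Z}_k$ and \cref{lemma:gen-conv} provide. The second, subtler point is that the proximal branch controls the $\Lambda_k$-residual at $x^k$ only on $\{{\sf Y}_{k+1}=0\}$, whereas the Newton branch controls the recorded residual $\theta_{k+1}$ at $x^{k+1}$ only on $\{{\sf Y}_{k+1}=1\}$; the residual at a proximal-to-Newton transition iterate is not directly bounded, and one exploits that such an iterate is always immediately followed by an accepted Newton iterate with small $\theta_{k+1}$, together with the monotonicity of $(\alpha_k)_k$, to splice the two controlled families into a single subsequence realizing the $\liminf$.
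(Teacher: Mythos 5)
Your proposal is correct in substance and shares the paper's skeleton: \cref{prop:prox-descent} for the proximal branch, the growth condition \cref{eq:growth-2} for the Newton branch, and the ${\sf Z}_k$-recursion with \cref{eq:new-fund-2}--\cref{eq:new-fund-3} (i.e.\ \cref{lemma:gen-conv}) to bound the accumulated Newton ascent by a sample-path-independent constant, followed by telescoping against (A.2), taking expectations with (C.2) and $\sum\alpha_k\sigma_k^2<\infty$, and a Fatou argument for the almost-sure part. Where you genuinely diverge is the endgame. The paper never leaves the two branches expressed in different residuals: in the proximal branch it uses the Lipschitz continuity of $F^{\Lambda}$ together with $\|x^{k+1}-x^k\|=\alpha_k\|v^k\|$ to convert the descent into a bound on $\|F^\Lambda(x^{k+1})\|^2$, and in the Newton branch it adds and subtracts $c_1\alpha_{k+1}\|F^\Lambda(x^{k+1})\|^2$ (via \cref{eq:flam-err}), paying an extra term ${\sf T}_k\|\Fsubkk(\nstep)\|^p$ that the same $C_z(p)$ bound absorbs. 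This produces a single series $\sum_k\alpha_{k+1}\,\Exp[\|F^\Lambda(x^{k+1})\|^2]<\infty$ covering \emph{every} index (monotonicity of $(\alpha_k)_k$ enters only through $\min\{\alpha_k,\alpha_{k+1}\}=\alpha_{k+1}$), so the transition-iterate problem you wrestle with never arises. Your route, keeping the proximal control at $x^k$ and the Newton control at $x^{k+1}$ and splicing afterwards, is workable but strictly harder to close.

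One step in your closing paragraph is not valid as stated and needs repair. First, for the in-expectation claim, whether an index $k$ is a ``proximal-to-Newton transition'' is path-dependent (${\sf Y}_{k+1}\in\hat{\mathcal F}_k$ is random), so a pathwise splice cannot by itself produce a deterministic subsequence of indices along which $\Exp[\|F^\Lambda(x^k)\|^2]\to 0$. Second, ``passing to a common subsequence on which both the sampled residual and $\sigma_k$ vanish'' is a non sequitur: $\liminf a_k=0$ and $\liminf b_k=0$ do not yield a common subsequence. Both defects disappear if you upgrade the splice to a summability statement, using exactly the ingredients you name: on $\{{\sf Y}_{k+1}=1,\ {\sf Y}_k=0\}$ one has $x^k=x^{k-1}-\alpha_{k-1}F^{\Lambda_{k-1}}_{s^{k-1}}(x^{k-1})$, so Lipschitz continuity of $F^\Lambda$, the norm equivalence \cref{eq:new:misc3}, and $\alpha_k\le\alpha_{k-1}$ give $\alpha_k\,\Exp[{\sf Y}_{k+1}(1-{\sf Y}_k)\|\Fsubk(x^k)\|^2]\lesssim\alpha_{k-1}\Exp[(1-{\sf Y}_k)\|F^{\Lambda_{k-1}}_{s^{k-1}}(x^{k-1})\|^2]+\alpha_{k-1}\sigma_{k-1}^2+\alpha_k\sigma_k^2$, which is summable by your proximal family; combined with your two controlled families and $\Exp[\|F^\Lambda(x^k)\|^2]\lesssim\Exp[\|\Fsubk(x^k)\|^2]+\sigma_k^2$, this yields $\sum\alpha_k\,\Exp[\|F^\Lambda(x^k)\|^2]<\infty$, whence $\sum\alpha_k=\infty$ delivers the liminf at one stroke, and the almost-sure statement follows by Fatou exactly as in the paper—no case distinction on the number of accepted Newton steps is needed.
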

\begin{proof}  Assumption (A.1) implies that the gradient mapping $\nabla f(x)$ is Lipschitz continuous on $\Rn$ with Lipschitz constant $L$. Thus, for any matrix $\Gamma \in \Spp$ with $\lamM I \succeq \Gamma \succeq \lamm I$, we obtain the Lipschitz constant $1+ L \lamm^{-1}$ for $u^\Gamma(x) $. Since the proximity operator $\proxt{\Gamma}{r}$ is $\Gamma$-nonexpansive, we now have 
\begin{align*} \|F^\Gamma(x) - F^\Gamma(y)\| & \leq \|x-y\| + \lamm^{-\half} \|\proxt{\Gamma}{r}(u^\Gamma(x)) - \proxt{\Gamma}{r}(u^\Gamma(y))\|_\Gamma \\ & \leq \|x-y\| + (\lamm^{-1}\lamM)^{\half} \|u^\Gamma(x) - u^\Gamma(y)\| \\ & \leq (1+({\lamm^{-1}\lamM})^\half + L ({\lamm^{-3}\lamM})^\half) \|x-y\| \end{align*}
for all $x,y \in \Rn$. Hence, by assumption (B.1), the functions $x \mapsto F^{\Lambda_k}(x)$, $k \in \N$, are all Lipschitz continuous on $\Rn$ with modulus $L_{F} :=  1+({\lamm^{-1}\lamM})^\half + L ({\lamm^{-3}\lamM})^\half$.

We first consider the case where $x^{k+1} = \pstep = x^k + \alpha_k v^k$ is generated by the proximal gradient method in step \ref{alg:SSN5}. Then, due to (B.1) and \cref{remark:ada-lambda}, there exists a constant $\underline{\lambda} = \underline{\lambda}(\lamm,\lamM)$ such that
\begin{align*} 
\|F^{\Lambda}(x^{k+1})\| & \leq {\unl}^{-1} ( L_{F} \alpha_k \|v^k\| + \lamm^{-\half} \|F^{\Lambda_k}(x^k) - \Fsubk(x^k)\|_{\Lambda_k} + \lamm^{-\half} \|\Fsubk(x^k)\|_{\Lambda_k} ) \\ & \leq \unl^{-1}\lamm^{-\half} (L_F + 1)  \|\Fsubk(x^k)\|_{\Lambda_k} + (\unl \lamm)^{-1} \|\nabla f(x^k) - \gfsubk(x^k)\|. 
\end{align*}
Here, we again used the $\Lambda_k$-nonexpansiveness of the proximity operator $\proxt{\Lambda_k}{r}$. Thus, applying \cref{prop:prox-descent}, using the estimate $\|a+b\|^2 \leq 2 \|a\|^2 + 2 \|b\|^2$, for $a, b \in \Rn$, and setting $\mathcal E_k := \|\nabla f(x_k) - \gfsubk(x^k)\|$, we obtain
\begin{align*} \psi(x^k) - \psi(x^{k+1}) & \\ &\hspace{-12ex} \geq \underbracket{\begin{minipage}[t][6ex][t]{12ex}\centering$\displaystyle\frac{\gamma\unl^2\lamm}{2(L_F+1)^2}$\end{minipage}}_{=: \, c_1} \cdot \, \alpha_k \|F^\Lambda(x^{k+1})\|^2 - \underbracket{\begin{minipage}[t][6ex][t]{30ex}\centering$\displaystyle\frac{1}{\lamm} \left( \frac{\gamma}{(L_F+1)^2} + \frac{1}{4\gamma(\rho-1)} \right)$\end{minipage}}_{=: \, c_2}  \cdot \, \alpha_k\mathcal E_k^2. 
\end{align*}

Next, we derive a similar estimate for a Newton step $x^{k+1} = \nstep = x^k + d^k$. As before and due to assumption (B.1) and \cref{remark:ada-lambda}, we have
\be \label{eq:flam-err} \|F^\Lambda(x^{k+1})\|^2 \leq 2 \unl^{-2} \|\Fsubkk(\nstep)\|^2 + 2(\unl\lamm)^{-2} \|\nabla f(x^{k+1}) - \gfsubkk(x^{k+1}) \|^2. \ee
Combining the growth condition \cref{eq:growth-2}, \cref{eq:flam-err}, and the bound $\alpha_{k+1} \leq 1$, it holds 
\begin{align*} \psi(x^k) - \psi(x^{k+1}) & \geq - \beta \theta_k^{1-p} \|\Fsubkk(\nstep)\|^p - \veps_k^2 \\
& \geq  c_1 \cdot \alpha_{k+1} \|F^\Lambda(x^{k+1})\|^2 - \veps_k^2 - 2 c_1(\unl\lamm)^{-2} \cdot  \alpha_{k+1}\mathcal E_{k+1}^2 \\ & \hspace{2ex} -  \underbracket{\begin{minipage}[t][4ex][t]{34.5ex}\centering$\displaystyle\left( 2 c_1\unl^{-2} \|\Fsubkk(\nstep)\|^{2-p} + \beta \theta_k^{1-p}\right)$\end{minipage}}_{=: \,{\sf T}_k}  \|\Fsubkk(\nstep)\|^{p}.  \end{align*}
Furthermore, using $\|\Fsubkk(\nstep)\| = \theta_{k+1} \leq C_z$ and $\theta_k \leq C_z$, it can be easily shown that the term ${\sf T}_k$ is bounded by a constant $\bar {\sf T}$ that does not depend on any of the random mini-batches $s^j$, $t^j$, $j \in \N_0$. 

Now, let $R \in \N$ be arbitrary. Then, the monotonicity of $(\alpha_k)_k$, \cref{eq:new-fund-2}--\cref{eq:new-fund-3}, and our last results imply
\begin{align*} \psi(x^0) - \psi(x^{R+1}) & \\ & \hspace{-14ex} \geq \sum_{k=0}^{R}   c_1 \min\left\{{\alpha_k},{\alpha_{k+1}}\right\} \|F^{\Lambda}(x^{k+1})\|^2 - \sum_{k=0}^R (1- {\sf Y}_{k+1}) c_2 \cdot \alpha_k  \mathcal E_k^2 \\ & \hspace{-10ex} - \sum_{k=0}^R  {\sf Y}_{k+1} \left [ 2 c_1(\unl\lamm)^{-2} \cdot \alpha_{k+1}  \mathcal E_{k+1}^2 + \veps_k^2 + \bar {\sf T} \cdot  \|\Fsubkk(\nstep)\|^p \right ] \\
&  \hspace{-14ex} \geq  \sum_{k=0}^{R} \left[   c_1 \alpha_{k+1} \|F^{\Lambda}(x^{k+1})\|^2 -  c_2 \alpha_k \mathcal E_k^2 - 2 c_1(\unl\lamm)^{-2} \alpha_{k+1} \mathcal E_{k+1}^2 \right] - \bar {\sf T}C_z(p) - \sum_{k=0}^R \veps^2_k. \end{align*}
Thus, taking expectation and setting $ c_3 := c_2 +  2 c_1(\unl\lamm)^{-2}$, we obtain
\[  \sum_{k=0}^R c_1{\alpha_{k+1}}{\mathds E}[\|F^{\Lambda}(x^{k+1})\|^2] \leq \psi(x^{0}) - {\mathds E}[\psi(x^{R+1})] + \bar {\sf T} C_z(p) + \sum_{k=0}^R \veps_k^2+ {c_3} \sum_{k=0}^{R+1} \alpha_k{\sigma_k^2}. \]
Since the objective function $\psi$ is bounded from below and the sequences $(\alpha_k\sigma_k^2)_k$ and $(\veps^2_k)_k$ are summable, this obviously yields $\sum \alpha_k {\mathds E}[\|F^{\Lambda}(x^{k})\|^2] < \infty$. Consequently, our first claim follows from the assumption $\sum \alpha_k = \infty$. On the other hand, Fatou's lemma implies
\[ \mathds E\left[ \sum_{k=0}^\infty \alpha_{k} \|F^\Lambda(x^{k})\|^2 \right] \leq \liminf_{R \to \infty} \mathds E\left[ \sum_{k=0}^R {\alpha_{k}}\|F^{\Lambda}(x^{k})\|^2 \right] < \infty \]
and hence, we have $\sum \alpha_k \|F^\Lambda(x^k)\|^2 < \infty$ with probability 1. As before we can now infer $\liminf_{k \to \infty} F^\Lambda(x^k) = 0$ with probability 1 which completes our proof.  
\end{proof}

\begin{remark} \label{remark:sample-size}
In the case $\sum \sigma_k^2 < \infty$ and if the step sizes $\alpha_k$ are fixed or bounded, our results in \cref{theorem:conv-prox-v2} can be strengthened to $\lim_{k \to \infty} \Exp [\|F^\Lambda(x^k)\|^2] = 0$ and we have $\lim_{k \to \infty} F^\Lambda(x^k) = 0$ almost surely. Let us now assume that the samples $s^k_i$, $i \in [\ngk]$, $k \in \N$, are chosen independently of each other and that the conditions
\be \label{eq:remark-uni-exp} \Exp[\cG(x,s^k_i)] = \nabla f(x), \quad\quad \Exp[\| \nabla f(x)- \cG(x,s^k_{i})\|^2] \leq \bar\sigma^2, \ee
hold uniformly for all $i \in [\ngk]$, $k \in \N_0$, and $x \in \Rn$ and for some $\bar\sigma > 0$. Then, as shown in \cite{GhaLanZha16,IusJofOliTho17}, it follows $\Exp[\|\nabla f(x) - \gfsubk(x)\|^2] \leq \bar\sigma^2 [\ngk]^{-1}$ for all $x \in \Rn$ and consequently, due to ${\sf Y}_k \in \{0,1\}$ and $z^{k-1}_{\sf n}, z^{k-1}_{\sf p} \in \mathcal F_{k-1}$, we have  
\begin{align*} \Exp[\|\nabla f(x^k) - \gfsubk(x^k)\|^2] & \leq 2 \Exp[(1-{\sf Y}_k) \|\nabla f(z^{k-1}_{\sf p})- \gfsubk(z^{k-1}_{\sf p})\|^2] \\ & \hspace{4ex}+ 2 \Exp[{\sf Y}_k \|\nabla f(z^{k-1}_{\sf n}) - \gfsubk(z^{k-1}_{\sf n}) \|^2] \leq 4\bar\sigma^2 [\ngk]^{-1}. \end{align*}
Hence, one way to guarantee summability of the error terms $\sigma_k^2$ is to asymptotically increase the sample size $\ngk$ and set $\ngk = \mathcal O(k^{1+\varpi})$ for some $\varpi > 0$. This observation is similar to the results in \cite{XuYin15,GhaLanZha16}. We will discuss the conditions \cref{eq:remark-uni-exp} in more detail in the next section.
\end{remark}

In the following, we present a situation where the approximate $\psi$-descent condition \cref{eq:growth-2} is not needed in order to guarantee global convergence of the method. In applications, this can be quite important, since calculating the full objective function $\psi$ may be similarly expensive as evaluating the full gradient $\nabla f$. The following variant of \cref{theorem:conv-prox-v2} is mainly based on the strong convexity assumption (A.3) and on the boundedness assumption (A.4).

\begin{theorem} \label{theorem:conv-prox-strong} Let the sequence $(x^k)_k$ be generated by Algorithm \ref{alg:ssn} without checking the growth condition \cref{eq:growth-2}. Suppose that the assumptions {\rm(A.1)}, {\rm(A.3)}--{\rm(A.4)}, {\rm(B.1)}--{\rm(B.2)}, and {\rm(C.1)}--{\rm(C.2)} are satisfied. Furthermore, suppose that the step sizes $(\alpha_k)_k$ are chosen via $\alpha_k \in [\underline{\alpha}, \min\{1,\overline{\alpha}\}]$ for some $\underline{\alpha} > 0$ and all $k \in \N$. Then, under the additional assumption
\[  \sum \sigma_k < \infty, \] 
it holds $\lim_{k \to \infty} {\Exp}[\|F^\Lambda(x^k)\|^2] = 0$ and $\lim_{k\to\infty} F^\Lambda(x^k) = 0$ a.s. for any $\Lambda \in \Spp$ .
\end{theorem}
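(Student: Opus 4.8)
The plan is to exploit the strong convexity granted by (A.3) to replace the missing descent condition \cref{eq:growth-2}. Since $\psi$ is strongly convex with parameter $\bar\mu > 0$, problem \cref{eq:prob} has a unique minimizer $x^*$, and $V_k := \psi(x^k) - \psi(x^*) \geq 0$ for every feasible iterate. As established in the proof of \cref{theorem:conv-prox-v2}, the maps $F^{\Lambda_k}$ are Lipschitz continuous with a common modulus, and by \cref{remark:ada-lambda} the quantities $\|F^\Lambda(x^k)\|$ and $\|F^{\Lambda_k}(x^k)\|$ are equivalent up to $\unl,\ovl$; moreover $F^\Lambda(x^*) = 0$. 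Hence it suffices to show that $\sum_k \|F^\Lambda(x^k)\|^2$ is finite, both in expectation and almost surely, which at once yields $\Exp[\|F^\Lambda(x^k)\|^2] \to 0$ and $F^\Lambda(x^k) \to 0$ a.s. I would obtain this by telescoping a $\psi$-inequality over all iterations and bounding the proximal- and Newton-step contributions to $\sum_k \|F^\Lambda(x^{k+1})\|^2$ separately, the key new point being that the role of \cref{eq:growth-2} is now played by a summable bound on the $\psi$-ascent of a Newton step obtained from the error bound \cref{lemma:str-conv} and assumption (A.4).

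For the proximal gradient steps (where ${\sf Y}_{k+1} = 0$) nothing changes relative to \cref{theorem:conv-prox-v2}: combining \cref{prop:prox-descent}, which applies since $\alpha_k \leq \min\{1,\overline\alpha\}$, together with \cref{remark:ada-lambda} and the $\Lambda_k$-nonexpansiveness of $\proxt{\Lambda_k}{r}$, gives with $\mathcal E_k := \|\nabla f(x^k) - \gfsubk(x^k)\|$ and the constants $c_1,c_2 > 0$ from that proof the estimate $\psi(x^k) - \psi(x^{k+1}) \geq c_1 \alpha_k \|F^\Lambda(x^{k+1})\|^2 - c_2 \alpha_k \mathcal E_k^2$.

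For the Newton steps (where ${\sf Y}_{k+1} = 1$, so $x^{k+1} = \nstep \in \dom~r$ satisfies \cref{eq:growth-1}) I would derive two bounds. First, \cref{eq:flam-err} together with $\|\Fsubkk(\nstep)\| = \theta_{k+1} \leq {\sf Z}_{k+1}$ gives $\|F^\Lambda(x^{k+1})\|^2 \leq 2\unl^{-2}{\sf Z}_{k+1}^2 + 2(\unl\lamm)^{-2}\mathcal E_{k+1}^2$; since ${\sf Z}_{k+1} \leq C_z$ we get $\sum_k {\sf Y}_{k+1}{\sf Z}_{k+1}^2 \leq C_z \sum_k {\sf Y}_{k+1}{\sf Z}_{k+1} < \infty$ by \cref{eq:new-fund-3} with $q = 1$, while $\sum_k \Exp[\mathcal E_{k+1}^2] \leq \sum_k \sigma_{k+1}^2 < \infty$ because $\sum \sigma_k < \infty$ forces $\sum \sigma_k^2 < \infty$; hence $\sum_k {\sf Y}_{k+1}\Exp[\|F^\Lambda(x^{k+1})\|^2] < \infty$. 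Second, to control the ascent I would write $\psi(x^{k+1}) - \psi(x^k) = V_{k+1} - V_k \leq V_{k+1}$ and bound $V_{k+1} = \psi(\nstep) - \psi(x^*)$ by convexity: choosing $g = \nabla f(\nstep) + \lambda \in \partial \psi(\nstep)$ with $\lambda \in \partial r(\nstep)$, $\|\lambda\| \leq \bar g_r$ by (A.4), and using $\|\nabla f(\nstep)\| \leq \|\nabla f(x^*)\| + L\|\nstep - x^*\|$ yields $V_{k+1} \leq (\|\nabla f(x^*)\| + \bar g_r)\|\nstep - x^*\| + L\|\nstep - x^*\|^2$. Applying \cref{lemma:str-conv} at $\nstep$ with the sample $s^{k+1}$ gives $\|\nstep - x^*\| \leq \sqrt{B_1}\,{\sf Z}_{k+1} + \sqrt{B_2}\,\mathcal E_{k+1}$, so $V_{k+1}$ is dominated by linear and quadratic terms in ${\sf Z}_{k+1}$ and $\mathcal E_{k+1}$; summing over Newton steps and taking expectations, the quadratic terms are summable as above and the linear ones are summable precisely because $\sum_k {\sf Y}_{k+1}{\sf Z}_{k+1} < \infty$ and $\sum_k \Exp[\mathcal E_{k+1}] \leq \sum_k \sigma_{k+1} < \infty$. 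This produces $\sum_k {\sf Y}_{k+1}\Exp[V_{k+1}] < \infty$ and is exactly the step that explains the stronger hypothesis $\sum \sigma_k < \infty$.

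It then remains to assemble the pieces. Telescoping over $k = 0,\dots,R$ and using $\psi(x^{R+1}) \geq \psi(x^*)$ gives $\psi(x^0) - \psi(x^{R+1}) \leq \psi(x^0) - \psi(x^*) = V_0$; inserting the proximal descent on $\{{\sf Y}_{k+1} = 0\}$ and the crude bound $\psi(x^k) - \psi(x^{k+1}) \geq -V_{k+1}$ on $\{{\sf Y}_{k+1} = 1\}$ and rearranging yields $\sum_k (1-{\sf Y}_{k+1}) c_1 \alpha_k \|F^\Lambda(x^{k+1})\|^2 \leq V_0 + c_2 \sum_k \alpha_k \mathcal E_k^2 + \sum_k {\sf Y}_{k+1} V_{k+1}$. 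Taking expectations, the right-hand side is finite ($\sum_k \alpha_k \Exp[\mathcal E_k^2] \leq \sum_k \sigma_k^2 < \infty$ since $\alpha_k \leq 1$, and the ascent sum was bounded above), and since $\alpha_k \geq \underline{\alpha} > 0$ the left-hand side dominates $c_1 \underline{\alpha} \sum_k (1-{\sf Y}_{k+1}) \Exp[\|F^\Lambda(x^{k+1})\|^2]$. Adding the Newton contribution gives $\sum_k \Exp[\|F^\Lambda(x^{k+1})\|^2] < \infty$, whence $\Exp[\|F^\Lambda(x^k)\|^2] \to 0$; running the same estimate pathwise (the error and ascent sums are a.s. finite because their expectations are finite) gives $\sum_k \|F^\Lambda(x^k)\|^2 < \infty$ and thus $F^\Lambda(x^k) \to 0$ a.s. The main obstacle is the Newton-ascent estimate: without \cref{eq:growth-2} one must argue that an accepted Newton iterate is automatically close to $x^*$, which is forced by \cref{eq:growth-1} through the contraction of $\theta_k$ and the error bound \cref{lemma:str-conv}, and then convert this proximity into a summable bound on $\psi(\nstep) - \psi(x^*)$ using the Lipschitz continuity of $r$ granted by (A.4).
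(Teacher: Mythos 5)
Your proposal is correct, and it rests on exactly the same pillars as the paper's proof: the proximal-step descent estimate inherited from \cref{prop:prox-descent} and the proof of \cref{theorem:conv-prox-v2}, the error bound of \cref{lemma:str-conv} applied at the accepted Newton iterate, assumption {\rm(A.4)} to control the linear term in $\|\nstep - x^*\|$, the sample-independent summability $\sum_k {\sf Y}_{k+1}{\sf Z}_{k+1} \leq C_z(1)$ from \cref{eq:new-fund-3} with $q=1$, and Jensen's inequality to turn $\sum \sigma_k < \infty$ into summable first-moment error terms (this is indeed the step that explains the strengthened hypothesis). The one genuine structural difference is how the Newton-step residuals enter the final sum. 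The paper keeps the quadratic term $\frac{\bar\mu}{2}\|\nstep - x^*\|^2$ from the strong-convexity inequality \cref{eq:str-conv-dir}, lower-bounds it by $\frac{\bar\mu \unl^2}{2L_F^2}\|F^\Lambda(x^{k+1})\|^2$ via the Lipschitz continuity of $F^{\Lambda_{k+1}}$, and thereby obtains descent \emph{and} ascent control in a single inequality per Newton step, so that one telescoped $\psi$-sum with the constant $d_3 = \min\{c_1\underline{\alpha}, (2L_F^2)^{-1}\bar\mu\unl^2\}$ controls all residuals at once. You instead decouple the two tasks: you bound $\sum_k {\sf Y}_{k+1}\|F^\Lambda(x^{k+1})\|^2$ directly from \cref{eq:flam-err} together with ${\sf Y}_{k+1}{\sf Z}_{k+1}^2 \leq C_z\, {\sf Y}_{k+1}{\sf Z}_{k+1}$, and use only plain convexity of $\psi$ in the ascent bound $V_{k+1} \leq (\bar g_r + \|\nabla f(x^*)\|)\|\nstep - x^*\| + L\|\nstep - x^*\|^2$. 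Your version makes visible that summability of the accepted Newton residuals needs only the \cref{eq:growth-1} machinery, with strong convexity entering solely through \cref{lemma:str-conv} in the ascent control; the paper's combined inequality is more compact. Two minor points to tighten: since $f$ need not be convex by itself, your subgradient bound should be justified through the directional derivative of the convex function $\psi$, i.e.\ $\psi(x^*) - \psi(\nstep) \geq \iprod{\nabla f(\nstep)}{x^*-\nstep} + r'(\nstep; x^*-\nstep) \geq \iprod{\nabla f(\nstep) + \lambda}{x^*-\nstep}$ for the $\lambda \in \partial r(\nstep)$, $\|\lambda\| \leq \bar g_r$, furnished by {\rm(A.4)} (this is how the paper phrases it via \cref{eq:str-conv-dir}); and your pathwise argument for the almost sure statement, resting on the a.s.\ finiteness of the nonnegative error and ascent sums, is a valid substitute for the paper's appeal to Fatou's lemma.
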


\begin{proof}  As in the proof of \cref{theorem:conv-prox-v2}, we want to derive suitable lower bounds for the $\psi$-descent $\psi(x^k) - \psi(x^{k+1})$. We first consider the case where $x^{k+1} = \pstep$ is generated by the proximal gradient method. Then, as shown in the proof of \cref{theorem:conv-prox-v2} and using the bound on $\alpha_k$, we have 
\[ \psi(x^k) - \psi(x^{k+1}) \geq c_1  \underline{\alpha} \|F^\Lambda(x^{k+1})\|^2 - c_2 \cdot \mathcal E_k^2. \]
%

Next, we discuss the second case $x^{k+1} = \nstep$. By \cref{lemma:str-conv} and reusing the estimate $\|\Fsubkk(\nstep)\| \leq C_z$ (see again \cref{eq:new-fund-2}), it holds
\[ \|\nstep - x^*\|^2 \leq B_1(\tau)  \|\Fsubkk(\nstep)\|^2 + B_2(\tau) \mathcal E_{k+1}^2 \leq B_1(\tau)C_z \|\Fsubkk(\nstep)\| + B_2(\tau) \mathcal E_{k+1}^2 \]
for some $\tau > 0$. By assumption (A.3), the functions $f - \frac{\mu_f}{2}\|\cdot\|^2$ and $r - \frac{\mu_r}{2}\|\cdot\|^2$ are convex (and directionally differentiable) and hence, we have 
\be \label{eq:str-conv-dir} \psi(y) - \psi(x) \geq r^\prime(x;y-x) + \iprod{\nabla f(x)}{y-x} + \frac{\bar\mu}{2} \|y-x\|^2, \quad \forall~x,y \in \dom~r. \ee 
Now, applying the optimality of $x^*$, \cref{eq:str-conv-dir} with $x \equiv \nstep$ and $y \equiv x^*$, $\nstep \in \dom~r$, the Lipschitz continuity of $\nabla f$ and $F^\Lambda$, the subadditivity of the square root, and defining 
\[ d_1 := \sqrt{B_1(\tau)}(\bar g_r + \|\nabla f(x^*)\|) + B_1(\tau)C_z L, \quad d_2 := \sqrt{B_2(\tau)}(\bar g_r + \|\nabla f(x^*)\|), \] 
we obtain
\begin{align*} \psi(x^k) - \psi(x^{k+1}) & = \psi(x^k) - \psi(x^*) + \psi(x^*) - \psi(x^{k+1}) \\ & \geq r^\prime(\nstep; x^*-\nstep) + \iprod{\nabla f(\nstep)}{x^* - \nstep} + \frac{\bar\mu}{2} \|\nstep - x^*\|^2 \\ & \geq -(\bar g_r + \|\nabla f(x^*)\|) \cdot \|\nstep - x^*\| + \left(\frac{\bar\mu}{2} - L \right) \|\nstep - x^*\|^2 \\ & \geq - d_1  \|\Fsubkk(\nstep)\| - d_2 \mathcal E_{k+1}  - B_2(\tau)L \mathcal E_{k+1}^2 + \frac{\bar\mu{\underline{\lambda}}^2}{2L_F^2} \|F^\Lambda(x^{k+1})\|^2.  \end{align*}
Combining the last inequalities, setting $d_3 :=  \min\{c_1\underline{\alpha}, (2L_F^2)^{-1}\bar\mu{\underline{\lambda}}^2\}$ and using again \cref{eq:new-fund-3} with $q = 1$, it holds
\begin{align*}
\psi(x^0) - \psi(x^{R+1}) \geq \sum_{k=1}^{R+1} \left[ d_3 \|F^\Lambda(x^{k})\|^2 - d_2 \mathcal E_k \right] - (c_2 + B_2(\tau)L) \sum_{k=0}^{R+1} \mathcal E_k^2 - d_1 C_z(1),  \end{align*}
for all $R \in \N$. Taking expectation, our first claim now follows from (C.2), Jensen's inequality, $\sum \sigma_k < \infty$, and from the lower boundedness of $\psi(x^{R+1})$. The probabilistic convergence of the sequence $(F^\Lambda(x^k))_k$ can then be inferred as in the proof of \cref{theorem:conv-prox-v2}.
\end{proof}

\begin{remark} \label{remark:example-c3} Let us note that assumption {\rm(A.4)} is required to derive a suitable lower bound for the difference terms $\psi(x^k) - \psi(\nstep)$ which allows us to apply \cref{lemma:gen-conv}. Furthermore, condition {\rm(A.4)} is always satisfied in the following situation. Suppose that the mapping $r$ has the special form $r = \iota_{\mathcal C} + \varphi$, where $\varphi : \Rn \to \R$ is a real-valued, convex function and $\iota_{\mathcal C} : \Rn \to \Rex$ is the indicator function of a nonempty, convex, and closed set $\mathcal C \subset \Rn$. Then, assumption {\rm(A.4)} holds if the set $\mathcal C$ is either compact or if $\varphi$ is positively homogeneous. In particular, condition {\rm(A.4)} is satisfied if $r$ is a norm. \end{remark}
\begin{proof} For every feasible point $x \in \dom~r = \mathcal C$, we have $0 \in \partial \iota_{\mathcal C}(x)$ and thus, $\partial \varphi(x) \subset \partial r(x)$. By \cite[Proposition 16.17]{BauCom11}, the set $\bigcup_{x \in \mathcal C} \partial \varphi(x)$ is bounded if $\mathcal C$ is bounded. On the other hand, if $\varphi$ is positively homogeneous, then it follows $\partial \varphi(x) = \{\lambda \in \partial \varphi(0): \iprod{\lambda}{x} = \varphi(x)\} \subset \partial \varphi(0)$, see, e.g., \cite[Proposition 16.18]{BauCom11} and \cite[Example 2.5.17]{Mil16}. Since $\partial \varphi (0)$ is again a compact set, this proves our claim.
\end{proof}

\begin{remark} The result in \cref{theorem:conv-prox-strong} can be further improved by additionally damping the semismooth Newton step and setting $\nstep := x^k + \alpha_k d^k$. Then, due to the convexity of $\psi$, we have $\psi(x^k) - \psi(\nstep) \geq \alpha_k (\psi(x^k) - \psi(x^k+d^k))$ and we can use the weaker conditions
\[ (\alpha_k)_k \, \text{ is monotonically decreasing}, \quad \sum~\alpha_k = \infty, \quad \sum~\alpha_k\sigma_k < \infty \]
to guarantee $\liminf_{k \to \infty} {\mathds E}[\|F^\Lambda(x^k)\|^2] = 0$. Similar to \cite{FS2012, XuYin15,BBN2016} it is also possible to derive global convergence rates in terms of the expected distance to optimality $\mathds E[\psi(x^k) - \psi(x^*)]$. However, in our case these rates will depend on the occurrence and total number of accepted Newton steps which are of stochastic nature in general.  
 \end{remark}
 
Finally, let us emphasize that our global results do not explicitly depend on the sampling strategy or on any (uniform) invertibility properties of the stochastic second order oracle $\hfsub$ or of the chosen generalized derivatives $M_k$. Moreover, our results still hold if a different type of direction $d^k$ is used instead of the semismooth Newton direction $d^k = - M_k^+ \Fsubk(x^k)$. (In our proofs, we only require $\mathcal F_k$-measurability of $d^k$). 
%
%


\section{Local convergence}\label{sec:local} 

In this part of the paper, we analyze the local convergence properties of our proposed method in detail. We will focus on a probabilistic setting, i.e., we consider a single trajectory of the stochastic process $(x^k)_k$ and show that transition to fast local convergence and a fast rate of convergence can be achieved with high probability if the sample sizes $\ngk$ and $\nhk$ are chosen appropriately. With a slight abuse of notation, we will use $(x^k)_k$ to denote either the underlying stochastic process or a corresponding trajectory generated by a single run of Algorithm \ref{alg:ssn} which should be clear from the context.

Our analysis heavily relies on different second order properties of the proximity operator $\proxt{\Lambda}{r}$ and on concentration inequalities for vector- and matrix-valued martingales. In particular, these inequalities will allow us to quantify and control the errors induced by the stochastic oracles and by approximating the gradient and Hessian of $f$. A similar strategy was also used in \cite{RKM2016I,RKM2016II,XRKM2017,YeLuoZha17} for the analysis of pure, sub-sampled Newton methods for smooth optimization problems. 
In the next subsection, we present our local assumptions and the mentioned concentration results. 


\subsection{Assumptions and Conditional Concentration Inequalities} \label{sec:loc-condconc}
We will mainly work with the following set of local assumptions.

\begin{assumption} \label{assumption:local} Let the trajectory $(x^k)_k$ and the sequence $(\Lambda_k)_k$ be generated by Algorithm \ref{alg:ssn} and suppose that $x^*$ and $\Lambda_*$ are accumulation points of $(x^k)_k$ and $(\Lambda_k)_k$, respectively. We assume that the following conditions are satisfied. \vspace{.5ex}
\begin{itemize}
\item[{\rm(D.1)}] There exists $\bar k \in \N$ such that $\Lambda_k = \Lambda_*$ for all $k \geq \bar k$.
\item[{\rm(D.2)}] The function $f : \Rn \to \R$ is twice continuously differentiable on $\Rn$.
\item[{\rm(D.3)}] The proximity operator $\proxt{\Lambda_*}{r}$ is semismooth at $u^{\Lambda_*}(x^*)$. 
%
%
\item[{\rm(D.4)}] The function $\psi$ is Lipschitz continuous in a neighborhood of $x^*$ with constant $L_\psi$. 
\item[{\rm(D.5)}] There exists $C > 0$ such that every generalized derivative $M \in \mathcal M^{\Lambda_*}(x^*)$ is nonsingular with $\|M^{-1}\| \leq C$. 

\end{itemize}
If, in addition, $x^*$ is a stationary point of \cref{eq:prob}, then we assume:
\begin{itemize}
\item[{\rm(D.6)}] The accumulation point $x^*$ is a local minimum of the problem \cref{eq:prob}.
\end{itemize}
\end{assumption}

Let us briefly discuss the conditions in \cref{assumption:local}. Assumption (D.5) can be interpreted as a \textit{BD-} or \textit{CD-regularity condition} which is a common condition in the local analysis of nonsmooth optimization methods, see, e.g., \cite{QiSun93,Qi93,PanQi93}. 
Let us also mention that, by \cite[Corollary 8.30]{BauCom11}, the Lipschitz condition (D.4) is equivalent to $x^* \in \inter~\dom~r$. Hence, in a suitable neighborhood of $x^*$, any point $x$ will be feasible with $x \in \dom~r$. See \cref{remark:proj-loc} for further comments. Finally, as shown in \cite[section 5.4]{Mil16}, the assumptions (D.5) and (D.6) are both satisfied if $x^*$ is a stationary point and $\nabla^2 f(x^*)$ is positive definite.
%
%
%
%
%
%
%
%
%
%

In the following, we introduce three additional conditions that are connected to the variance of the error terms $\errgk$ and $\errhk$,
\[ \errgk(x) :=  \|\gfsubk(x) - \nabla f(x)\|, \quad \errhk(x) := \|\hfsubk(x) - \nabla^2 f(x)\|, \]
and that extend assumption (C.2). 
%
%

\begin{assumption} \label{assumption:con-var} We consider the conditions:
%
%
%
\begin{itemize}
\item[{\rm (E.1)}] The random mappings $s^k_i$, $t^k_j$ are mutually independent to each other for all $i \in [\ngk]$, $j \in [\nhk]$, and $k \in \N_0$. Furthermore, the stochastic oracles $\sfo$ and $\sso$ generate unbiased estimators of the gradient and Hessian of $f$, i.e., for all $x \in \Rn$ it holds
\end{itemize}
\[ \Exp[\cG(x,s^k_i)] = \nabla f(x), \quad \Exp[\cH(x,t^k_j)] = \nabla^2 f(x), \quad \forall~i \in [\ngk], \; \forall~j \in [\nhk], \; \forall~k \in \N. \]
\begin{itemize}
\item[{\rm (E.2)}] We have $\Exp[[\errgk(z^{k-1}_{\sf n})]^2] < \infty$, $\Exp[[\errgk(z^{k-1}_{\sf p})]^2] < \infty$, $\Exp[[\errhk(x^k)]^2] < \infty$ for all $k \in \N$ and there exists $\bar\sigma, \bar\rho \geq 0$ such that for all $k \in \N_0$, $i \in [\ngk]$, $j \in [\nhk]$ it holds
\end{itemize}
\be \label{eq:var-bound-quad} \Exp[\|\cG(x,s^k_i) - \nabla f(x)\|^2] \leq \bar\sigma^2, \quad \Exp[\|\cH(x,t^k_j) - \nabla^2 f(x)\|^2] \leq \bar\rho^2, \quad \forall~x \in \Rn. \ee
\begin{itemize}
\item[{\rm (E.3)}] There exists $\bar\sigma, \bar\rho > 0$ such that $\Exp[\exp([\errgk(z^{k-1}_{\sf n})]^2/\bar\sigma^2)] < \infty$, 
\[ \Exp[\exp([\errgk(z^{k-1}_{\sf p})]^2/\bar\sigma^2)] < \infty, \quad \Exp[\exp([\errhk(x^k)]^2/\bar\rho^2)] < \infty, \quad \forall~k \in \N, \]
and for all $x \in \Rn$ and all $k \in \N_0$, $i \in [\ngk]$, $j \in [\nhk]$, we have
\end{itemize}
\be \label{eq:var-bound-exp} \Exp[\exp(\|\cG(x,s^k_i)- \nabla f(x)\|^2/\bar\sigma^2)] \leq e, \;\; \Exp[\exp(\|\cH(x,t^k_j) - \nabla^2 f(x)\|^2/\bar\rho^2)] \leq e. \ee
\end{assumption}

The inequalities and properties stated in (E.1)--(E.2) have already been discussed in \cref{remark:sample-size}. As we have seen and as we will verify in \cref{lemma:tail-bound} and in \cref{sec:trans-loc} in more detail, these conditions allow us to bound and control the error terms $\errgk$ and $\errhk$ by means of the sample sizes $\ngk$ and $\nhk$. We note that the conditions given in \cref{assumption:con-var} are commonly used in the complexity and convergence analysis of stochastic optimization methods, see, e.g., \cite{GL2013,BBN2016,GhaLanZha16}. Next, we summarize several conditional, large deviation bounds for vector- and matrix-valued martingales. For more information on tail bounds and additional matrix concentration inequalities, we refer to the papers \cite{JudNem08,Tro12}. To the best of our knowledge, the ``light tail'' result for symmetric random matrices presented in \cref{lemma:tail-bound} (ii) seems to be new. 

\begin{lemma} \label{lemma:tail-bound} Let  $(\mathcal U_k)_{k=0}^m$ be a given filtration of the $\sigma$-algebra $\mathcal F$ and let $\bar \sigma \in \R^m$ be a given vector with $\bar\sigma_k \neq 0$ for all $k$. It holds:
\begin{itemize}
\item[{\rm(i)}] Let $({\sf X}_k)_{k=1}^m$, ${\sf X}_k : \Omega \to \Rn$, be a family of random vectors, satisfying ${\sf X}_k \in \mathcal U_k$, $\Exp[{\sf X}_k \mid \mathcal U_{k-1}] = 0$, and $\Exp[\|{\sf X}_k\|^2 \mid \mathcal U_{k-1}] \leq \bar\sigma_k^2$ $\text{a.e.}$ for all $k \in [m]$. Then, we have 
\end{itemize}
\[ \Exp[\|{\textstyle \sum_{k=1}^m} {\sf X}_k\|^2 \mid \mathcal U_0] \leq \|\bar\sigma\|^2 \quad \text{and} \quad \Prob( \|{\textstyle \sum_{k=1}^m} {\sf X}_k\| \geq \tau \|\bar\sigma\| \mid \mathcal U_0) \leq \tau^{-2}, \quad \forall~t > 0 \]
\begin{itemize}
\item[] almost everywhere. In addition, if it holds $\Exp[ \exp(\|{\sf X}_k\|^2/\bar\sigma_k^2) \mid \mathcal U_{k-1}] \leq \exp(1)$
$\text{a.e.}$ and for all $k \in [m]$, then with probability 1 it follows
\end{itemize}
\[ \Prob\left(\left\|{\textstyle\sum_{k=1}^m} {\sf X}_k\right\| \geq (1 + \tau) \|\bar\sigma\| \mid \mathcal U_0 \right) \leq \exp(-\tau^2/3), \quad \forall~\tau > 0. \]
\begin{itemize}
\item[\rm(ii)] Let $({\sf X}_k)_{k=1}^m$ be a sequence of symmetric random $n \times n$ matrices satisfying ${\sf X}_k \in \mathcal U_k$, $\Exp[{\sf X}_k \mid \mathcal U_{k-1}] = 0$, and $\Exp[\|{\sf X}_k\|^2 \mid \mathcal U_{k-1}] \leq \bar\sigma_k^2$ $\text{a.e.}$ for all $k \in [m]$. Then, it holds
\end{itemize}
\[ \Prob( \|{\textstyle \sum_{k=1}^m} {\sf X}_k\| \geq \tau \|\bar\sigma\| \mid \mathcal U_0) \leq \kappa_n \cdot \tau^{-2}, \quad \forall~\tau > 0 \]
\begin{itemize}
\item[] $\text{a.e.}$ with $\kappa_n :=(2\log(n+2)-1)e$. Additionally, if we have $\Exp[ \exp(\|{\sf X}_k\|^2/\bar\sigma_k^2) \mid \mathcal U_{k-1}] \leq \exp(1)$ $\text{a.e.}$ and for all $k \in [m]$, then with probability 1 it follows
\end{itemize}
\be \label{eq:mat-bound-light} \Prob\left(\left\|{\textstyle\sum_{k=1}^m} {\sf X}_k\right\| \geq \tau \|\bar\sigma\| \mid \mathcal U_0 \right) \leq 2n \cdot \exp(-\tau^2/3), \quad \forall~\tau > 0. \ee
\end{lemma}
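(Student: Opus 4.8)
The plan is to establish the four claimed tail and moment bounds by reducing everything to two martingale tools: the classical Pythagorean identity for martingale differences (for the $L^2$ bounds) and a Bernstein-type subgaussian martingale inequality (for the ``light tail'' bounds), combined with a symmetrization/dilation trick in the matrix case so that the operator norm can be controlled through a scalar martingale. Throughout, write $\mathsf{S}_m := \sum_{k=1}^m \mathsf{X}_k$ and note that by the assumptions $\Exp[\mathsf{X}_k \mid \mathcal U_{k-1}] = 0$ together with $\mathsf{X}_k \in \mathcal U_k$, the partial sums form a martingale adapted to $(\mathcal U_k)_k$.

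For part (i), the $L^2$ bound follows directly: conditioning on $\mathcal U_0$ and using the tower property together with orthogonality of martingale increments, the cross terms vanish since $\Exp[\iprod{\mathsf X_j}{\mathsf X_k}\mid \mathcal U_0] = \Exp[\iprod{\mathsf X_j}{\Exp[\mathsf X_k\mid \mathcal U_{k-1}]}\mid \mathcal U_0]=0$ for $j<k$, leaving $\Exp[\|\mathsf S_m\|^2\mid \mathcal U_0] \leq \sum_k \bar\sigma_k^2 = \|\bar\sigma\|^2$; the polynomial tail is then the conditional Chebyshev/Markov inequality applied to $\|\mathsf S_m\|^2$. For the light-tail statement I would invoke a conditional vector-valued Bernstein/Azuma inequality under the subgaussian increment hypothesis $\Exp[\exp(\|\mathsf X_k\|^2/\bar\sigma_k^2)\mid \mathcal U_{k-1}]\leq \exp(1)$, which is exactly the ``light tail'' regime; this is the standard martingale analogue of the bound in \cite{JudNem08} and yields the stated $\exp(-\tau^2/3)$ decay with the $(1+\tau)$ shift absorbing the first-moment contribution.

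Part (ii) is where the real work lies, and I expect the \emph{main obstacle} to be passing from scalar martingale concentration to the operator norm of a matrix martingale while keeping the dimensional factor sharp. The plan is to control $\|\mathsf S_m\| = \max\{\lambda_{\max}(\mathsf S_m), \lambda_{\max}(-\mathsf S_m)\}$ for symmetric $\mathsf S_m$ via the matrix Laplace transform / matrix moment generating function machinery of Tropp \cite{Tro12}, adapted to the conditional (martingale) setting. Concretely, one bounds $\Exp[\tr \exp(\theta \mathsf S_m)\mid \mathcal U_0]$ by iteratively peeling off increments using the subadditivity of the matrix cgf and the hypothesis $\Exp[\|\mathsf X_k\|^2\mid \mathcal U_{k-1}]\leq \bar\sigma_k^2$; the trace introduces the factor $n$ (hence the $2n$ in \cref{eq:mat-bound-light} from the two-sided estimate) and, in the polynomial case, a more careful optimization over a ``smoothing'' parameter produces the logarithmic constant $\kappa_n = (2\log(n+2)-1)e$. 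The delicate point is that the standard matrix Chernoff results are stated for independent summands, so I would need the \emph{martingale} version of the matrix Laplace transform bound, which requires verifying that the conditional matrix cgf inequality holds step by step and that the required measurability/tower-property manipulations are valid $\text{a.e.}$

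Finally, I would tie the two regimes together by a uniform optimization over the free scalar parameter $\theta$: for the polynomial tail in (ii) this gives $\kappa_n \tau^{-2}$ after balancing the trace factor against a second-moment estimate, while for the light-tail case \cref{eq:mat-bound-light} the subgaussian hypothesis lets $\theta$ range freely, producing the $\exp(-\tau^2/3)$ decay multiplied by the dimensional factor $2n$. The cleanest exposition is probably to prove the scalar/vector statements of part (i) first as warm-ups, then state explicitly the conditional matrix Laplace transform lemma (citing \cite{Tro12} for the unconditional analogue and remarking that the martingale extension is routine), and finally optimize; the novelty flagged in the text — the light-tail matrix bound — is exactly the step where the subgaussian increment condition replaces a boundedness assumption, so I would double-check that the derivation of $\kappa_n$ and the constant $1/3$ does not secretly require almost-sure bounds on $\|\mathsf X_k\|$.
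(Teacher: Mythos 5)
Your part (i) and the light-tail half of part (ii) track the paper's proof, but your route to the polynomial matrix bound $\kappa_n\tau^{-2}$ in part (ii) contains a genuine gap. You propose to get it from the matrix Laplace transform machinery, with ``a more careful optimization over a smoothing parameter'' producing $\kappa_n = (2\log(n+2)-1)e$. This cannot work: in that regime the only hypothesis is the conditional second-moment bound $\Exp[\|{\sf X}_k\|^2\mid\mathcal U_{k-1}]\le\bar\sigma_k^2$, and under such an assumption the conditional moment generating function $\Exp[e^{\theta{\sf X}_k}\mid\mathcal U_{k-1}]$ may be infinite for every $\theta>0$ (already in the scalar case $n=1$, with heavy-tailed increments of finite variance), so no trace-exponential/Chernoff argument is available; moreover the Laplace method intrinsically yields tails decaying exponentially in $\tau$, never a Chebyshev-type $\tau^{-2}$ bound. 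The paper instead invokes the Juditsky--Nemirovski result \cite{JudNem08} that the spectral norm on $\Sn$ is $\kappa_n$-\emph{regular} (a $2$-smoothness property, established via a Schatten-norm surrogate of the operator norm), which gives the conditional moment inequality $\Exp[\|\sum_{k}{\sf X}_k\|^2\mid\mathcal U_0]\le\kappa_n\|\bar\sigma\|^2$ a.e.\ for matrix martingale differences; the tail bound then follows from the conditional Markov inequality, in exact parallel to your (correct) argument for part (i). The logarithmic constant thus comes from a moment inequality in a regular normed space, not from balancing a trace factor in a Chernoff bound.

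The remainder of your plan coincides with the paper's. For part (i), the orthogonality-plus-Chebyshev argument and the appeal to \cite[Theorem 4.1]{JudNem08} (the Euclidean norm being $1$-smooth) are precisely what the paper does. For the light-tail matrix bound \cref{eq:mat-bound-light}, your outline matches the paper's appendix proof, and the concern you flag at the end --- that Tropp's martingale (Azuma-type) bounds assume almost surely bounded increments --- is exactly the crux; the paper resolves it with an explicit conditional semidefinite mgf lemma that your proposal is missing. Namely, combining $e^x\le x+e^{9x^2/16}$ with Jensen's inequality (for $\theta\le 4/(3\bar\sigma_k)$) and Young's inequality $\theta x\le 3\theta^2\bar\sigma_k^2/8+2x^2/(3\bar\sigma_k^2)$ (for large $\theta$), the hypothesis $\Exp[\exp(\|{\sf X}_k\|^2/\bar\sigma_k^2)\mid\mathcal U_{k-1}]\le e$ yields $\Exp[e^{\theta{\sf X}_k}\mid\mathcal U_{k-1}]\preceq\exp(3\theta^2\bar\sigma_k^2/4)\,I$ a.e.\ for \emph{all} $\theta\ge0$; feeding this into the Lieb-based conditional iteration of \cite[Theorem 7.1]{Tro12} gives $\Exp[\tr\exp(\theta\sum_k{\sf X}_k)\mid\mathcal U_0]\le n\exp(3\theta^2\|\bar\sigma\|^2/4)$, and choosing $\theta=2\tau/(3\|\bar\sigma\|^2)$ followed by a dilation (or two-sided) argument produces the factor $2n$ and the rate $\exp(-\tau^2/3)$. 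Supplying that mgf lemma, and replacing your Laplace-transform derivation of the $\kappa_n\tau^{-2}$ bound with the norm-regularity moment inequality, would close the two gaps.
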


\begin{proof} The first result in part (i) is well-known, see \cite{JudNem08,GhaLanZha16,IusJofOliTho17}. The associated probability bound directly follows from the conditional Markov inequality. Since the Euclidean norm is \textit{$1$-smooth}, the second result in part (i) follows from \cite[Theorem 4.1]{JudNem08}. In \cite{JudNem08}, Juditsky and Nemirovski also verified that the spectral norm is \textit{$\kappa_n$-regular} which implies 
\[ \Exp[\|{\textstyle\sum_{k=1}^m} {\sf X}_k\|^2 \mid \mathcal U_0] \leq \kappa_n \|\bar\sigma\|^2 \quad \text{a.e.} \]
and establishes the first bound in part (ii). The remaining result can be shown by combining the techniques presented in \cite{Tro12} and \cite[Proposition 4.2]{JudNem08}. For the sake of completeness, an explicit proof is given in \cref{sec:app-3}. \end{proof}

Let us now suppose that the assumptions (E.1)--(E.2) are satisfied. Then, using the integrability condition $\Exp[[\errgk(z^{k-1}_{\sf n})]^2] < \infty$, \cref{fact:one}, and since $\cG$ is a Carath\'{e}odory function and the $\sigma$-algebras $\mathcal F_{k-1}$ and $\sigma(s^{k}_i)$ are independent, it follows \vspace{1ex}
\begin{itemize}
\item $\Exp[\cG(z^{k-1}_{\sf n},s^{k}_i) - \nabla f(z^{k-1}_{\sf n}) \mid \mathcal F_{k-1}] = \Exp[\cG(\cdot,s^{k}_i)](z^{k-1}_{\sf n}) - \nabla f(z^{k-1}_{\sf n}) = 0$, \vspace{.5ex}
\item $\Exp[\|\cG(z^{k-1}_{\sf n},s^{k}_i) - \nabla f(z^{k-1}_{\sf n})\|^2 \mid \mathcal F_{k-1}] = \Exp[\|\cG(\cdot,s^{k}_i) - \nabla f(\cdot)\|^2](z^{k-1}_{\sf n}) \leq \bar\sigma^2$, \vspace{1.5ex}
\end{itemize}
$\text{a.e.}$ and for all $i = 1,...,\ngk$ and $k \in \N$, see \cite[Theorem 2.10]{BhaWay16}. In a similar fashion (and since $\sigma(t_j^k)$ and $\hat{\mathcal F}_{k-1}$ are independent), we can derive almost sure bounds for the proximal gradient step $z^{k-1}_{\sf p}$ and for the Hessian error terms $\cH(x^k,t^k_j) - \nabla^2 f(x^k)$, $j \in [\nhk]$. Hence, the results and bounds in \cref{lemma:tail-bound} are applicable in this situation. We also want to point out that the integrability conditions in \cref{assumption:con-var} are only required for the well-definedness of the conditional expectations and for the associated calculus, \cite{Wil91,BhaWay16}. 

\subsection{Second Order Properties and Bounded Invertibility} 
In this subsection, we derive a general invertibility result that can be applied to guarantee bounded invertibility of the generalized derivatives used in our stochastic semismooth Newton framework.  

We start with the presentation of several second order properties of the proximity operator that are essential for our analysis and that are motivated by the results in \cite{HirStrNgu84,MenSunZha05}.

Since the proximity operator is a Lipschitz continuous function, Rademacher's theorem implies that the proximal mapping $\proxt{\Lambda}{r}$ is Fr\'echet differentiable almost everywhere. Let $\Omega_{r}^\Lambda \subset \Rn$ denote the set of all points at which the proximity operator $\proxt{\Lambda}{r}$ is differentiable. Then, as shown in \cite[Section 3.3]{Mil16}, the following statements are valid: \vspace{1ex}
\begin{itemize} 
\item For all $x \in \Omega_{r}^\Lambda$ the matrix $\Lambda{D}\proxt{\Lambda}{r}(x)$ is symmetric and positive semidefinite. \vspace{.5ex}
\item For all $x \in \Omega_{r}^\Lambda$ the matrix $\Lambda(I - {D}\proxt{\Lambda}{r}(x))$ is symmetric and positive semidefinite. 
\end{itemize}

\vspace{1ex}
A continuity argument shows that the last properties are also satisfied for every generalized derivative $D \in \partial_B \proxt{\Lambda}{r}(x)$. In the following lemma, we summarize our observations and state an analogue result for the Clarke subdifferential of the proximity operator $\proxt{\Lambda}{r}$. Let us mention that Meng et al. \cite{MenSunZha05} established a similar result for metric projections onto convex, nonempty, and closed sets. Our result also extends Theorem 3.2 in \cite{PatSteBem14}.

\begin{lemma} \label{lemma:prox:sec-clarke}
Let $\Lambda \in \Spp$ and $x \in \Rn$ be arbitrary. Then, for every $D \in \partial \proxt{\Lambda}{r}(x)$, the following statements are true: \vspace{0.5ex}
\begin{itemize} 
\item[\rmn{(i)}] The matrices $\Lambda D$ and $\Lambda (I - D)$ are symmetric and positive semidefinite. \vspace{0.5ex}
\item[\rmn{(ii)}] It holds $\iprod{Dh}{\Lambda(I-D)h} \geq 0$ for all $h \in \Rn$.
\end{itemize}
\end{lemma}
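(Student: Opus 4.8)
The plan is to build on the second order properties of $\proxt{\Lambda}{r}$ that the preceding discussion has already established at differentiability points and extended, by a continuity argument, to the B-subdifferential $\partial_B \proxt{\Lambda}{r}(x)$: for every $D \in \partial_B \proxt{\Lambda}{r}(x)$ both $\Lambda D$ and $\Lambda(I-D)$ are symmetric and positive semidefinite. The whole proof then reduces to passing from $\partial_B$ to the full Clarke subdifferential, using the standard identity $\partial \proxt{\Lambda}{r}(x) = \mathrm{co}(\partial_B \proxt{\Lambda}{r}(x))$, see \cite{Clarke1990}.

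For part (i), I would write an arbitrary $D \in \partial \proxt{\Lambda}{r}(x)$ as a convex combination $D = \sum_i \mu_i D_i$ with $D_i \in \partial_B \proxt{\Lambda}{r}(x)$, $\mu_i \geq 0$, and $\sum_i \mu_i = 1$. Since the maps $D \mapsto \Lambda D$ and $D \mapsto \Lambda(I-D)$ are affine, this gives $\Lambda D = \sum_i \mu_i \Lambda D_i$ and $\Lambda(I-D) = \sum_i \mu_i \Lambda(I-D_i)$, where the normalization $\sum_i \mu_i = 1$ is used for the second identity. As the set of symmetric positive semidefinite matrices is a convex cone and each summand lies in it, both $\Lambda D$ and $\Lambda(I-D)$ are symmetric and positive semidefinite, which proves (i).

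For part (ii), the idea is to symmetrize by the congruence $A := \Lambda^{\half} D \Lambda^{-\half}$, so that $\Lambda D = \Lambda^{\half} A \Lambda^{\half}$ and $\Lambda(I-D) = \Lambda^{\half}(I-A)\Lambda^{\half}$. Since $\Lambda^{\half}$ is symmetric and nonsingular, congruence invariance together with part (i) shows that $A$ is symmetric with $0 \preceq A \preceq I$; hence its eigenvalues all lie in $[0,1]$, the commuting symmetric matrices $A$ and $I-A$ share an eigenbasis, and $A(I-A)$ is symmetric positive semidefinite. Finally, substituting $g := \Lambda^{\half} h$ and using $D = \Lambda^{-\half} A \Lambda^{\half}$ together with $A = A^\top$, a direct computation yields $\iprod{Dh}{\Lambda(I-D)h} = \iprod{g}{A(I-A)g} \geq 0$, as claimed.

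The computations here are routine once the setup is fixed. The only point requiring genuine care is the transition to the convex hull in part (i): I would make sure that passing from $\partial_B$ to $\partial$ preserves symmetry \emph{and} semidefiniteness simultaneously, which is exactly where the convex-cone structure of the positive semidefinite matrices and the normalization $\sum_i \mu_i = 1$ (for the $\Lambda(I-D)$ term) are essential. Part (ii) then follows purely from the spectral information $0 \preceq A \preceq I$ produced by (i), so no new structural input about $\proxt{\Lambda}{r}$ is needed beyond what part (i) already delivers.
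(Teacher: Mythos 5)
Your proof is correct. For part (i) you take essentially the same route as the paper: the paper's one-line argument is precisely the passage $\partial \proxt{\Lambda}{r}(x) = \mathrm{co}(\partial_B \proxt{\Lambda}{r}(x))$ from \cite{Clarke1990}, applied to the properties already established for $\partial_B$-elements in the preceding discussion, and your convex-combination computation (affinity of $D \mapsto \Lambda D$ and $D \mapsto \Lambda(I-D)$, convex-cone structure of the symmetric positive semidefinite matrices, the normalization $\sum_i \mu_i = 1$ for the second identity) is exactly what that sentence compresses. Part (ii) is where you genuinely diverge: the paper gives no argument at all, declaring the proof identical to \cite[Proposition 1]{MenSunZha05}, which treats metric projections and works from the (firm) nonexpansiveness of the underlying operator at differentiability points before passing to the Clarke subdifferential. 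You instead derive (ii) as a pure corollary of (i): the congruence $A := \Lambda^{\half} D \Lambda^{-\half}$ satisfies $A = \Lambda^{-\half}(\Lambda D)\Lambda^{-\half}$ and $I - A = \Lambda^{-\half}(\Lambda(I-D))\Lambda^{-\half}$, so (i) gives $0 \preceq A = A^\top \preceq I$, hence $A(I-A) \succeq 0$ by spectral reasoning, and with $g = \Lambda^{\half}h$ one checks $\iprod{Dh}{\Lambda(I-D)h} = \iprod{g}{A(I-A)g} \geq 0$; I verified this computation and it is correct. What your route buys is that (ii) becomes a statement of pure linear algebra, valid for \emph{any} matrix $D$ with $\Lambda D$ and $\Lambda(I-D)$ symmetric positive semidefinite, with no second appeal to differentiability points or to the convex hull — a step the projection-based route must handle separately (there it works because $D \mapsto \iprod{Dh}{\Lambda(I-D)h}$ is concave in $D$, the quadratic term $\|\Lambda^{\half}Dh\|^2$ being convex). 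The citation-based route, in turn, avoids introducing $\Lambda^{\half}$ and keeps the argument parallel to the classical projection case; note, however, that your symmetrization trick is entirely consistent with the paper's own style, which uses the same congruence $\Lambda^{\half}D\Lambda^{-\half}$ in the proof of \cref{lemma:invertibility}.
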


\begin{proof} The first part is a consequence of $\partial \proxt{\Lambda}{r}(x) = {\rm conv}(\partial_B \proxt{\Lambda}{r}(x))$ and of the invertibility of $\Lambda$. The proof of the second part is identical to the proof of \cite[Proposition 1]{MenSunZha05} and therefore will be omitted. \end{proof}

Next, we present the promised, local invertibility result. 

\begin{lemma} \label{lemma:invertibility} Suppose that the conditions {\rm(A.1)}, {\rm(D.2)}, and {\rm(D.5)} are satisfied and let $s$ and $t$ be arbitrary sample mini-batches. Furthermore, let $x^*$ and $\Lambda_*$ be given as in \cref{assumption:local} with $\lamM I \succeq \Lambda_* \succeq \lamm I$. Then, for all $\gamma_c \in (0,\beta_c/C)$, $\beta_c \in (0,1)$, there exists $\veps_c > 0$ (that does not depend on $s$ or $t$) such that under the additional conditions  
\be \label{eq:bd-inv-cond} x \in B_{\veps_c}(x^*), \quad \|\nabla f(x) - \gfsub(x)\| \leq \veps_c,  \quad \|\nabla^2 f(x) - \hfsub(x)\| \leq 0.5\lamm\gamma_c, \ee
%
the matrices $M \in \mathcal M^{\Lambda_*}_{s,t}(x)$ are all boundedly invertible with $\|M^{-1}\| \leq C / (1-\beta_c)$.
%
%
\end{lemma}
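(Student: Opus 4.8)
The plan is to realize every $M \in \mathcal M^{\Lambda_*}_{s,t}(x)$ as a small perturbation of a suitable \emph{deterministic} derivative $\tilde M \in \mathcal M^{\Lambda_*}(x^*)$ and then to invoke the Banach perturbation lemma together with the bounded-invertibility assumption (D.5). Fix such an $M$ and write $M = (I-D) + D\Lambda_*^{-1}\hfsub(x)$ with $D \in \partial\proxt{\Lambda_*}{r}(u_s)$, where $u_s := x - \Lambda_*^{-1}\gfsub(x)$. First I would control how far the evaluation point $u_s$ is from $u_* := x^* - \Lambda_*^{-1}\nabla f(x^*)$: using $\|x - x^*\| \leq \veps_c$, the Lipschitz bound (A.1), and $\|\Lambda_*^{-1}\| \leq \lamm^{-1}$, one obtains $\|u_s - u_*\| \leq (1 + \lamm^{-1}(1+L))\veps_c =: c_0\veps_c$, a bound that is independent of the mini-batch $s$.

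The crucial step is to pass from $D$, which lives in the Clarke Jacobian at the \emph{perturbed} point $u_s$, to a subgradient at $u_*$. Since $\proxt{\Lambda_*}{r}$ is globally Lipschitz, its Clarke generalized Jacobian $\partial\proxt{\Lambda_*}{r}$ is upper semicontinuous and compact-valued. Hence for every $\delta > 0$ there is $\veps_c > 0$ with $\partial\proxt{\Lambda_*}{r}(y) \subseteq \partial\proxt{\Lambda_*}{r}(u_*) + \delta B$ for all $\|y - u_*\| \leq c_0\veps_c$; in particular there exists $\tilde D \in \partial\proxt{\Lambda_*}{r}(u_*)$ with $\|D - \tilde D\| \leq \delta$. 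Because this neighborhood bound depends only on $\proxt{\Lambda_*}{r}$ and not on the realized samples, the resulting $\veps_c$ is uniform in $s$ and $t$. Setting $\tilde M := (I - \tilde D) + \tilde D\Lambda_*^{-1}\nabla^2 f(x^*) \in \mathcal M^{\Lambda_*}(x^*)$, assumption (D.5) then guarantees that $\tilde M$ is nonsingular with $\|\tilde M^{-1}\| \leq C$. I expect this upper-semicontinuity and uniformity argument to be the main obstacle, since it is the only place where the nonsmoothness of the problem genuinely enters.

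It then remains to estimate $\|M - \tilde M\|$. Writing $M - \tilde M = (\tilde D - D) + D\Lambda_*^{-1}(\hfsub(x) - \nabla^2 f(x^*)) + (D - \tilde D)\Lambda_*^{-1}\nabla^2 f(x^*)$, I would use \cref{lemma:prox:sec-clarke} to see that $D\Lambda_*^{-1} = \Lambda_*^{-1}(\Lambda_* D)\Lambda_*^{-1}$ is symmetric positive semidefinite and, from $\Lambda_* D \preceq \Lambda_*$, satisfies $D\Lambda_*^{-1} \preceq \Lambda_*^{-1}$, so that $\|D\Lambda_*^{-1}\| \leq \lamm^{-1}$. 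Splitting $\hfsub(x) - \nabla^2 f(x^*) = (\hfsub(x) - \nabla^2 f(x)) + (\nabla^2 f(x) - \nabla^2 f(x^*))$, the hypothesis $\|\nabla^2 f(x) - \hfsub(x)\| \leq 0.5\lamm\gamma_c$ and the continuity of $\nabla^2 f$ from (D.2) bound the Hessian term by $0.5\gamma_c + \lamm^{-1}\omega(\veps_c)$, where $\omega$ is the modulus of continuity of $\nabla^2 f$ near $x^*$. Collecting terms gives $\|M - \tilde M\| \leq 0.5\gamma_c + \delta(1 + \lamm^{-1}\|\nabla^2 f(x^*)\|) + \lamm^{-1}\omega(\veps_c)$. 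Since $\gamma_c < \beta_c/C$, the gap $\beta_c/C - 0.5\gamma_c$ is strictly positive, and I would finally shrink $\veps_c$ (hence $\delta$ and $\omega(\veps_c)$) so that the last two terms fall below this gap, yielding $\|M - \tilde M\| \leq \beta_c/C$. The Banach perturbation lemma then delivers invertibility of $M$ with $\|M^{-1}\| \leq \|\tilde M^{-1}\|/(1 - \|\tilde M^{-1}\|\,\|M - \tilde M\|) \leq C/(1-\beta_c)$, and since all estimates are uniform in $s$ and $t$, the proof is complete.
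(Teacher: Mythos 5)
Your proposal is correct and follows essentially the same route as the paper's proof: the same perturbation decomposition of $M - \tilde M$, the same appeal to upper semicontinuity of $\partial\proxt{\Lambda_*}{r}$ to find a nearby $\tilde D \in \partial\proxt{\Lambda_*}{r}(u^{\Lambda_*}(x^*))$, the same use of \cref{lemma:prox:sec-clarke} to bound $\|D\Lambda_*^{-1}\| \leq \lamm^{-1}$, and the same conclusion via (D.5) and the Banach perturbation lemma. The only cosmetic difference is that the paper fixes explicit constants (e.g., $\tilde\delta = \lamm\gamma_c/(4(\lamm+K_*))$, targeting $\|M - M_*\| \leq \gamma_c$) where you work with a generic modulus of continuity and aim directly at the threshold $\beta_c/C$.
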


\begin{proof} Let us set $K_* := \|\nabla^2 f(x^*)\|$. Since the multifunction $\partial \proxt{\Lambda_*}{r} : \Rn \rightrightarrows \R^{n \times n}$ is upper semicontinuous, there exists $\tilde\veps > 0$ such that
\be \label{eq:prf-upp-semi} \partial \proxt{\Lambda_*}{r}(y) \subset \partial \proxt{\Lambda_*}{r}(u^{\Lambda_*}(x^*)) + B_{\tilde\delta}(0), \quad \forall~y \in B_{\tilde\veps}(u^{\Lambda_*}(x^*)), \ee
where $\tilde\delta := \lamm\gamma_c / (4(\lamm + K_*))$, see, e.g., \cite[Proposition 2.6.2]{Clarke1990}. Moreover, by the continuity of the Hessian $\nabla^2 f$, we also have $\|\nabla^2 f(x) - \nabla^2 f(x^*)\| \leq 0.25\lamm\gamma_c$ for all $x \in B_{\tilde\veps}(x^*)$ without loss of generality. Let us now set $\veps_c := \min\{(L+\lamm)^{-1},1\}\frac{\tilde\veps\lamm}{2}$ and let us consider an arbitrary matrix $M \in \mathcal M^{\Lambda_*}_{s,t}(x)$ with 
\[ M = I - D + D\Lambda^{-1}_* \hfsub(x) \quad \text{and} \quad D \in \partial \proxt{\Lambda_*}{r}(u^{\Lambda_*}_s(x)). \] 
Then, due to
\begin{align} \nonumber \|u^{\Lambda_*}_s(x) - u^{\Lambda_*}(x^*)\| & \leq \|u^{\Lambda_*}_s(x) - u^{\Lambda_*}(x)\| + \|u^{\Lambda_*}(x) - u^{\Lambda_*}(x^*)\| \\ & \leq \lamm^{-1} \|\nabla f(x) - \gfsub(x)\| + (1+L\lamm^{-1}) \|x-x^*\| \leq \tilde\veps \label{eq:bound-ulam} \end{align}
and \cref{eq:prf-upp-semi}, there exists $D_* \in \partial \proxt{\Lambda_*}{r}(u^{\Lambda_*}(x^*))$ such that $\|D - D_*\| \leq \tilde \delta$. Using \cref{lemma:prox:sec-clarke} (i), we have $\|D\Lambda^{-1}_*\| = \|\Lambda_*^{-\half}[\Lambda_*^{\half}D\Lambda^{-\half}_*]\Lambda^{-\half}_*\| \leq \|\Lambda^{-1}_*\|$. Thus, defining $M_* := I - D_* + D_* \Lambda^{-1}_* \nabla^2 f(x^*) \in \mathcal M^{\Lambda_*}(x^*)$, it follows
\begin{align*} \|M - M_*\| & = \|(D_* - D)(I - \Lambda_*^{-1}\nabla^2 f(x^*)) + D\Lambda_*^{-1}(\hfsub(x) - \nabla^2 f(x^*))\| \\ & \leq (1+\lamm^{-1}K_*)\tilde\delta + \lamm^{-1} (\|\nabla^2 f(x) - \hfsub(x)\| + \|\nabla^2 f(x) - \nabla^2 f(x^*)\|) \leq \gamma_c
\end{align*}
Due to (D.5) and $\gamma_c \leq \beta_c/C$, it now holds $\|M_*^{-1}(M - M_*)\| \leq \beta_c < 1$. Consequently, by the Banach perturbation lemma, $M$ is invertible with
\[ \|M^{-1}\| = \|(M_* + M - M_*)^{-1}\| \leq \frac{\|M_*^{-1}\|}{1- \|M_*^{-1}(M - M_*)\|} \leq \frac{C}{1-\beta_c}. \]
This finishes the proof of \cref{lemma:invertibility}. 
%
\end{proof}

\subsection{Transition to Fast Local Convergence and Convergence Rates} \label{sec:trans-loc}

We now present our local convergence theory. As mentioned, our analysis and results rely on the observation that the stochastic Newton step $\nstep = x^k + d^k$ is always accepted as a new iterate with high probability if $x^k$ is close to a local solution $x^*$ and if the sample sizes $\ngk$ and $\nhk$ are sufficiently large. This will be discussed in detail in the next theorem. 
%
%
Throughout this subsection, we will work with the following functions and constants $\mu(x) := \min\{(2L_FC)^{-1},1\}  x$, 
${\mu_p}(x) := \min \{x,\min\{x^{p^{-1}},x^{(1-p)^{-1}}\}\}$,
\[ {\Upsilon}_k := \min \{ \mu(\veps_k^1),{\mu_p}(\veps_k^2)\}, \quad \text{and} \quad \Gamma_k := \min\{{\Upsilon}_{k-1},{\Upsilon}_k\}, \]  
where $L_F$ denotes the Lipschitz constant of $F^{\Lambda_*}$. (See the proof of \cref{theorem:conv-prox-v2}). Furthermore, let us set $\beta_1 := (2^pL_\psi C)^{-1} \min\{\beta,\half\}$ and
\be \label{eq:def-gam}  \beta_2 := \min  \left\{\frac{6\eta}{4L_F C+3\eta}, \beta_1^{\frac{1}{1-p}}\right\}, \quad \gamma_f := \frac{1}{2\max\{C,1\}}\min \left \{ \frac{1}{2},\beta_2 \right \}. \ee


\begin{theorem} \label{theorem:mega} Assume that the conditions {\rm(A.1)}--{\rm(A.2)}, {\rm(B.1)}--{\rm(B.2)}, {\rm(C.1)}, and {\rm(E.1)}--{\rm(E.2)} are satisfied and let the sequences $(x^k)_k$, $(\Lambda_k)_k$, and $(\alpha_k)_k$ be generated by Algorithm \ref{alg:ssn}. Let $x^*$ and $\Lambda_*$ be accumulation points of $(x^k)_k$ and $(\Lambda_k)_k$ fulfilling the conditions {\rm(D.1)}--{\rm(D.6)} and let $\gamma_f \in (0,1)$ be given as in \cref{eq:def-gam}. Moreover, suppose that the selected step sizes $(\alpha_k)_k$ are bounded via $\alpha_k \in [\underline{\alpha},\min\{1,\overline{\alpha}\}]$ for some $\underline{\alpha} > 0$ and all $k \in \N$. Then, for a given sequence $(\delta_k)_k \subset (0,1)$, the following statements are true:
\begin{itemize}
\item[{\rm(i)}] Suppose that there exists $\bar \ell \in \N$ such that \vspace{-0.5ex}
\end{itemize}
\be \label{eq:bounds-i} \ngk \geq \frac{1}{\delta_k}\left[\frac{2\bar\sigma}{\lamm \Gamma_k}\right]^2, \quad \nhk \geq \frac{\kappa_n}{\delta_k}\left[\frac{2\bar\rho}{\lamm\gamma_f} \right]^2, \quad \forall~k \geq \bar\ell. \vspace{-0.5ex} \ee
\begin{itemize}
\item[] Then, with probability $\delta_* := \prod_{k=\bar\ell}^\infty(1-\delta_k)(1-2\delta_k)$, the point $x^*$ is a stationary point of \cref{eq:prob}, there exists $\ell_* \in \N$ such that $x^k$ results from a stochastic semismooth Newton step for all $k \geq \ell_*$, and the whole sequence $(x^k)_k$ converges to $x^*$. 
\item[{\rm(ii)}] Let $\gamma_\eta \in (0,1)$ and $\bar \ell \in \N$ be given constants and let us set $\Gamma_k^\circ := \min \{\Upsilon_{k-1}^\circ,\Upsilon_k^\circ\}$ and $\Upsilon_k^\circ := \min\{\mu(\min\{\veps_k^1,\gamma_\eta^{k-\bar\ell}\}),\mu_p(\veps_k^2)\}$. Suppose that the bounds \vspace{-0.5ex}
\end{itemize}
\be \label{eq:bounds-ii} \ngk \geq \frac{1}{\delta_k}\left[\frac{2\bar\sigma}{\lamm \Gamma^\circ_k}\right]^2, \quad \nhk \geq \frac{\kappa_n}{\delta_k}\left[\frac{2\bar\rho}{\lamm\gamma_f} \right]^2 \vspace{-0.5ex} \ee
\begin{itemize}
\item[] hold for all $k \geq \bar\ell$. Then, with probability $\delta_*$, the statements in part {\rm(i)} are satisfied and $(x^k)_k$ converges r-linearly to $x^*$ with rate $\max\{\gamma_\eta,\half\}$. 
\item[{\rm(iii)}] Let $(\gamma_k)_k \subset (0,\infty)$ be a non-increasing sequence with $\gamma_k \to 0$ and let $(\rho_k)_k \subset (0,\infty)$ with $\rho_k \to 0$ and $\bar \ell \in \N$ be given. Let us define $\Gamma_k^\diamond := \min \{\Upsilon_{k-1}^\diamond,\Upsilon_k^\diamond\}$ and $\Upsilon_k^\diamond := \min\{\mu(\min\{\veps_k^1,\gamma_k^{k-\bar\ell}\}),\mu_p(\veps_k^2)\}$ and assume that the sample sizes fulfill \vspace{-0.5ex}
\end{itemize}
\be \label{eq:bounds-iii} \ngk \geq \frac{1}{\delta_k}\left[\frac{2\bar\sigma}{\lamm \Gamma^\diamond_k}\right]^2, \quad \nhk \geq \frac{1}{\delta_k\rho_k}  \vspace{-0.5ex} \ee
\begin{itemize}
\item[] for all $k \geq \bar \ell$. Then, with probability $\delta_*$, the statements in part {\rm(i)} are satisfied (for a possibly different $\ell_*$) and $(x^k)_k$ converges r-superlinearly to $x^*$.
\end{itemize}
\end{theorem}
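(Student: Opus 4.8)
The plan is to reduce the entire statement to a deterministic-style semismooth Newton analysis carried out pathwise on a single high-probability ``good event''. For each $k \ge \bar\ell$ I introduce the events on which the gradient error $\errgk$ (at the relevant point $\nstep$ or $\pstep$) is at most $\lamm\Gamma_k/2$ and the Hessian error $\errhk(x^k)$ is at most $\lamm\gamma_f/2$. Using the conditional variance bounds (E.2), the independence of $\sigma(s^k_i),\sigma(t^k_j)$ from $\mathcal F_{k-1},\hat{\mathcal F}_{k-1}$ recorded after \cref{lemma:tail-bound}, and the conditional Chebyshev / matrix--Chebyshev inequalities in \cref{lemma:tail-bound}(i)--(ii), I verify that the sample sizes prescribed in \cref{eq:bounds-i} make each event fail with conditional probability at most $\delta_k$ for the gradient and $2\delta_k$ for the Hessian; e.g.\ $\Exp[[\errgk(x^k)]^2\mid\mathcal F_{k-1}]\le\bar\sigma^2/\ngk$ combined with Chebyshev gives exactly the threshold in \cref{eq:bounds-i}. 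Because these are \emph{conditional} bounds, the tower property lets me multiply the per-step success probabilities, so the intersection over all $k\ge\bar\ell$ has probability at least $\delta_*=\prod_{k\ge\bar\ell}(1-\delta_k)(1-2\delta_k)$. Everything below is argued on this event.

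On the good event, \cref{lemma:invertibility} (with $\beta_c,\gamma_c$ fixed so that $\gamma_f\le\gamma_c$) shows that as soon as $x^k\in B_{\veps_c}(x^*)$ every $M_k\in\dsetMk(x^k)$ is invertible with $\|M_k^{-1}\|\le 2C$, so $d^k=-M_k^{-1}F^{\Lambda_*}_{s^k}(x^k)$ is well defined (recall $\Lambda_k=\Lambda_*$ for $k\ge\bar k$ by (D.1)). The key estimate is the Newton error expansion
\[ \nstep - x^* = M_k^{-1}\bigl[(M_k-M)(x^k-x^*) - F^{\Lambda_*}(x^*) - R_k - (F^{\Lambda_*}_{s^k}(x^k)-F^{\Lambda_*}(x^k))\bigr], \]
where $M\in\mathcal M^{\Lambda_*}(x^k)$ is the deterministic counterpart of $M_k$ and $R_k:=F^{\Lambda_*}(x^k)-F^{\Lambda_*}(x^*)-M(x^k-x^*)$. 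By (D.3) and the semismooth chain rule, $F^{\Lambda_*}$ is semismooth at $x^*$, hence $\|R_k\|=o(\|x^k-x^*\|)$; moreover $\|M_k-M\|\le\lamm^{-1}\errhk(x^k)\le\tfrac12\gamma_f$ and $\|F^{\Lambda_*}_{s^k}(x^k)-F^{\Lambda_*}(x^k)\|\le\lamm^{-1}\errgk(x^k)$. The constant $\gamma_f$ in \cref{eq:def-gam} is calibrated precisely so that $\|M_k^{-1}\|\,\|M_k-M\|\le\tfrac14$, which together with the $o(\cdot)$ term yields the contraction $\|\nstep-x^*\|\le\tfrac12\|x^k-x^*\|+c\,\errgk(x^k)$ once $x^*$ is known to be stationary.

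The crux, and the step I expect to be the main obstacle, is to close the loop: to prove that $x^*$ is stationary and that the Newton step is genuinely accepted, without presupposing $F^{\Lambda_*}(x^*)=0$. I would argue that once some iterate enters a sufficiently small ball around $x^*$ (which must happen since $x^*$ is an accumulation point), the residual decrease induced by the Newton step makes \cref{eq:growth-1} pass --- here the role of $\mu(\veps^1_k)$ inside $\Gamma_k$ is exactly to keep $\errgk$ below the slack $\veps^1_k$ so that $\|\Fsubkk(\nstep)\|\le(\eta+\nu_k)\theta_k+\veps^1_k$, while $\mu_p(\veps^2_k)$ (whose exponents $p^{-1},(1-p)^{-1}$ mirror the powers in \cref{eq:growth-2}) together with the local Lipschitz bound (D.4) on $\psi$ makes \cref{eq:growth-2} pass. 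Hence the Newton phase is entered and persists, the iterates are Cauchy because $\|x^{k+1}-x^k\|=\|d^k\|\le\|M_k^{-1}\|\,\|F^{\Lambda_*}_{s^k}(x^k)\|$ is summable, and they converge to some $x^{**}$ with $F^{\Lambda_*}(x^{**})=0$. As a convergent sequence has a unique accumulation point, $x^{**}=x^*$, so $x^*$ is stationary and, by (D.6), a local minimizer; this also legitimizes \textit{a posteriori} the contraction centred at $x^*$. There is $\ell_*$ beyond which every step is an accepted Newton step, which proves part (i).

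The rate statements follow by sharpening the error term $c\,\errgk(x^k)$ in the recursion $e_{k+1}\le\tfrac12 e_k+c\,\errgk(x^k)$, $e_k:=\|x^k-x^*\|$. For (ii), the prescription \cref{eq:bounds-ii} forces $\errgk\le\lamm\Gamma^\circ_k/2\lesssim\gamma_\eta^{\,k-\bar\ell}$, so $e_{k+1}\le\tfrac12 e_k+c\,\gamma_\eta^{\,k-\bar\ell}$; unrolling this linear recursion gives $e_k=O(\max\{\gamma_\eta,\tfrac12\}^{k})$, i.e.\ r-linear convergence with rate $\max\{\gamma_\eta,\tfrac12\}$. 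For (iii), \cref{eq:bounds-iii} uses $\gamma_k\to0$ inside $\Gamma^\diamond_k$ (so the gradient error decays super-geometrically) and $\nhk\ge(\delta_k\rho_k)^{-1}$ with $\rho_k\to0$, which by \cref{lemma:tail-bound}(ii) drives $\errhk(x^k)\to0$, hence $\|M_k-M\|\to0$; the contraction factor $\|M_k^{-1}\|\,\|M_k-M\|+o(1)$ then tends to $0$, turning the recursion into $e_{k+1}\le\omega_k e_k+(\text{super-geometric})$ with $\omega_k\to0$, from which r-superlinear convergence follows. Throughout, the genuine difficulty is the pathwise bootstrap of the previous paragraph, since all constants must stay uniform over the suppressed realizations of $s^k,t^k$.
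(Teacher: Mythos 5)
Your Step 1 (conditional Chebyshev/matrix-Chebyshev bounds from \cref{lemma:tail-bound} plus the tower property, giving $\delta_* = \prod_{k\geq\bar\ell}(1-\delta_k)(1-2\delta_k)$) and your rate recursions in (ii)--(iii) coincide with the paper's Steps 1, 5, and 6. The genuine gap is exactly the step you yourself flag as the crux: your bootstrap for stationarity of $x^*$, persistent acceptance, and whole-sequence convergence is circular and does not close. Your Newton error expansion contains the term $-M_k^{-1}F^{\Lambda_*}(x^*)$, which you cannot discard without already knowing $F^{\Lambda_*}(x^*)=0$, so the contraction $\|\nstep-x^*\|\leq \tfrac12\|x^k-x^*\|+c\,\errgk(x^k)$ is unavailable at the moment you need it. Likewise, acceptance via \cref{eq:growth-1} compares $\|F^{\Lambda_*}_{s^{k+1}}(\nstep)\|$ against $\theta_k$, the residual of the last \emph{accepted} Newton step, which bears no a priori relation to the current residual; merely ``entering a small ball around $x^*$'' does not give $\|F^{\Lambda_*}_{s^k}(x^k)\|\leq\theta_k$, and your Cauchy/summability claim for $\|d^k\|$ presupposes that the Newton phase has been entered and persists --- the very statement being proved.

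The paper breaks this circle with a global-to-local argument your proposal omits: on the good event the realized errors satisfy $\mathcal E_k \leq \lamm\Gamma_k/2 \leq \lamm\veps_k^1/2$, and since $(\veps_k^1)_k\in\ell^p_+\subset\ell^1_+$ by {\rm(B.2)}, they are summable pathwise; re-running the telescoping $\psi$-descent argument of \cref{theorem:conv-prox-v2} (with $\alpha_k\in[\underline\alpha,\min\{1,\overline\alpha\}]$) then yields $F^{\Lambda_*}(x^k)\to 0$ along the entire realized sequence, so $x^*$ is stationary \emph{before} any contraction estimate is invoked. Whole-sequence convergence then follows not from Cauchy-ness but from isolatedness of $x^*$ (semismoothness of $F^{\Lambda_*}$ together with the BD-regularity {\rm(D.5)}, as in Pang--Qi) combined with the Mor\'e--Sorensen accumulation-point lemma, using $\|x^{k+1}-x^k\|\to 0$ along subsequences converging to $x^*$. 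Finally, because $F^{\Lambda_*}_{s^k}(x^k)\to 0$ while $\theta_k$ is frozen during proximal steps, there exists an index with $\|F^{\Lambda_*}_{s^k}(x^k)\|\leq\theta_k$, from which acceptance propagates inductively (the paper's Step 4); this is the mechanism your ``small ball'' heuristic was missing. Two minor slips, neither fatal: the conditional variance bounds must be stated at the $\mathcal F_{k-1}$-measurable candidates $z^{k-1}_{\sf n}$, $z^{k-1}_{\sf p}$ (the iterate $x^k\in\hat{\mathcal F}_{k-1}$ already depends on $s^k$, so your displayed bound at $x^k$ is ill-posed, though your event definitions are correct); and the per-step failure probabilities $2\delta_k$ and $\delta_k$ attach to the pair of gradient events and to the Hessian event respectively --- the reverse of your attribution, although the product $\delta_*$ is unaffected.
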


\begin{proof}  
%
The proof is split into several steps. First, we utilize the concentration results in \cref{lemma:tail-bound} to quantify the occurrence and (conditional) probability of the events
\[ {\sf G}^{\sf n}_k(\veps) := \{\omega \in \Omega: {\mathcal E}^{\sf g}_{k}(z^{k-1}_{\sf n}(\omega)) \leq \veps\}, \quad {\sf G}^{\sf p}_k(\veps) := \{\omega\in \Omega: {\mathcal E}^{\sf g}_{k}(z^{k-1}_{\sf p}(\omega)) \leq \veps \}, \] 
%
${\sf G}_k(\veps) := {\sf G}^{\sf n}_k(\veps) \cap {\sf G}^{\sf p}_k(\veps)$, and ${\sf H}_k(\veps) := \{\omega \in \Omega: {\mathcal E}^{\sf h}_{k}(x^k(\omega)) \leq \veps\}$ for appropriately chosen $\veps > 0$. 
As a result and to some extent, the local convergence analysis then reduces to a discussion of a highly inexact (rather than stochastic) version of the deterministic semismooth Newton method for convex composite programming. In particular, we can reuse some of the strategies presented in \cite{MilUlb14,Mil16} in our proof. Based on the invertibility result in \cref{lemma:invertibility}, we establish convergence of the whole sequence $(x^k)_k$ in step 2. Afterwards, in step 3 and 4, we show that the growth conditions \cref{eq:growth-1}--\cref{eq:growth-2} are always satisfied whenever $k$ is sufficiently large. In the last steps we derive r-linear and r-superlinear convergence rates and prove part (ii) and (iii) of \cref{theorem:mega}. 

\textit{Step 1: Probability bounds}. 
%
%
We want to show that the event ${\sf E} := \bigcap_{k = \bar \ell}^\infty {\sf G}_k({\lamm\Gamma_k}/{2}) \cap {\sf H}_k({\lamm\gamma_f}/{2})$ occurs with probability $\Prob({\sf E}) \geq \delta_*$. Using the assumptions (E.1)--(E.2), \cref{fact:one} and as demonstrated in the paragraph after \cref{lemma:tail-bound}, we can apply the concentration results in \cref{lemma:tail-bound} via identifying ${\sf X}_i \equiv \cG(z_{\sf n}^{k-1},s^k_i) - \nabla f(z^{k-1}_{\sf n})$, $i \in [\ngk]$, etc. Specifically, setting $\tau = \sqrt{1/\delta_k}$ in part (i) and $\tau = \sqrt{\kappa_n/\delta_k}$ in part (ii) of  \cref{lemma:tail-bound} and using the bounds \cref{eq:bounds-i}, it easily follows $\Prob({\sf G}^{\sf n}_k(\lamm\Gamma_k/2) \mid \mathcal F_{k-1}) \geq 1 - \delta_k$, 
\[  \Prob({\sf G}^{\sf p}_k(\lamm\Gamma_k/2) \mid \mathcal F_{k-1}) \geq 1 - \delta_k, \quad \text{and} \quad \Prob({\sf H}_k(\lamm\gamma_f/2) \mid \hat{\mathcal F}_{k-1}) \geq 1 - \delta_k \]
almost everywhere and for all $k \geq \bar\ell$. Let us define $\bar\veps_k := \frac{\lamm\Gamma_k}{2}$ and $\bar\gamma := \frac{\lamm\gamma_f}{2}$. Then, by the tower property of the conditional expectation and by utilizing
\[ {\mathds 1}_{{\sf G}_k(\bar\veps_k)}(\omega) = {\mathds 1}_{{\sf G}^{\sf n}_k(\bar\veps_k) \cap {\sf G}^{\sf p}_k(\bar\veps_k)}(\omega) \geq {\mathds 1}_{{\sf G}^{\sf n}_k(\bar\veps_k)}(\omega) + {\mathds 1}_{{\sf G}^{\sf p}_k(\bar\veps_k)}(\omega) - 1, \quad \forall~\omega \in \Omega,\] 
and ${\sf G}^{\sf n}_k(\bar\veps_k), {\sf G}^{\sf p}_k(\bar\veps_k) \in \hat{\mathcal F}_{k-1}$, ${\sf H}_k(\bar\gamma) \in \mathcal F_k$, $k \geq \bar \ell$, we inductively obtain
\begin{align*} \Prob\left({\textstyle \bigcap_{k=\bar\ell}^L}~{\sf G}_k(\bar\veps_k) \cap {\sf H}_k(\bar\gamma) \right) & \\ & \hspace{-20ex}= \Exp\left[{\textstyle\prod_{k=\bar\ell}^{L-1}}{\mathds 1}_{{\sf G}_k(\bar\veps_k)}{\mathds 1}_{{\sf H}_k(\bar\gamma)} \left\{ \Exp[{\mathds 1}_{{\sf G}^{\sf n}_L(\bar\veps_L)\cap{\sf G}^{\sf p}_L(\bar\veps_L)}\Exp[{\mathds 1}_{{\sf H}_L(\bar\gamma)} \mid \hat{\mathcal F}_{L-1} ]  \mid {\mathcal F}_{L-1}] \right\} \right] \\ & \hspace{-20ex} \geq (1-2\delta_L)(1-\delta_L) \cdot \Exp\left[{\textstyle\prod_{k=\bar\ell}^{L-1}}{\mathds 1}_{{\sf G}_k(\bar\veps_k)}{\mathds 1}_{{\sf H}_k(\bar\gamma)} \right]  \geq \ldots \geq {\textstyle\prod_{k=\bar\ell}^L} (1-2\delta_k)(1-\delta_k)  \end{align*}
%
%
for any $L > \bar \ell$. Hence, taking the limit $L \to \infty$, this yields $\Prob({\sf E}) \geq \delta_*$. We now assume that the trajectories $(x^k)_k$, $(z^k_{\sf n})_k$, and $(z^k_{\sf p})_k$ are generated by a sample point $\bar\omega \in {\sf E}$, i.e., we have $(x^k)_k \equiv (x^k(\bar \omega))_k$ etc. (As we have just shown this happens with probability at least $\delta_*$).

\textit{Step 2: Convergence of $(x^k)_k$}. Let us continue with the proof of the first part. 
Using the definition of the event ${\sf G}_k$, we can infer
\be \label{eq:bound-cg} \mathcal E_k := \errgk(x^k) \leq \max\{\errgk(z^{k-1}_{\sf n}),\errgk(z^{k-1}_{\sf p})\} \leq \frac{\lamm\Gamma_k}{2} \leq \frac{\lamm \Upsilon_k}{2} \leq \frac{\lamm \veps_k^1}{2} \ee 
for all $k \geq \bar \ell$ 
and hence, by (B.2), the sequence $(\mathcal E_k)_k$ is summable. Following the proof of \cref{theorem:conv-prox-v2} and utilizing the boundedness of $(\alpha_k)_k$, this implies 
\be \label{eq:prf-flam-conv} F^{\Lambda_*}(x^k) \to 0 \ee
and thus, in this situation every accumulation point of $(x^k)_k$ is a stationary point of problem \cref{eq:prob}. Since the event ${\sf H}_k(\bar\gamma)$ occurs for all $k \geq \bar\ell$ by assumption, we can apply \cref{lemma:invertibility} with $\gamma_c := \gamma_f$ and $\beta_c := \frac{1}{4}$. Hence, there exists a constant $\veps_c$ (that neither depends on the samples $s^k$, $t^k$ nor on $k$) such that the statement in \cref{lemma:invertibility} holds whenever we have $x^k \in B_{\veps_c}(x^*)$ and $\mathcal E_k \leq \veps_c$. Furthermore, since $(\mathcal E_k)_k$ converges to zero, there exists $\tilde \ell \geq \max\{\bar\ell,\bar k\}$ such that the inequality $\mathcal E_k \leq \veps_c$ is satisfied for all $k \geq \tilde \ell$. Setting $\mathcal K_{\veps_c} := \{k \geq \tilde \ell: x^k \in B_{\veps_c}(x^*)\}$, this now implies
%
%
%
\be \label{eq:uni-bound-mk} \| M_k^{-1} \|  \leq \frac{4C}{3}, 
\quad \forall~M_k \in \mathcal M_{s^k,t^k}^{\Lambda_*}(x^k), \quad \forall~k \in \mathcal K_{\veps_c}. \ee
%
Next, let $(x^k)_{k \in K}$ denote an arbitrary subsequence of $(x^k)_k$ converging to $x^*$. Then, there exists $\tilde k \in K$ such that $\{k \in K: k \geq \tilde k\} \subset {\mathcal K}_{\veps_c}$ and
%
consequently, using \cref{eq:prf-flam-conv}, the estimate $\|F^{\Lambda_*}_{s^k}(x) - F^{\Lambda_*}(x)\| \leq \lamm^{-1}\errgk(x)$, and the summability of $(\mathcal E_k)_k$, we obtain
\[ \|x^{k+1} - x^k\| \leq \min\{4C/3,1\} \|F^{\Lambda_*}_{s^k}(x^k)\| \leq \min\{4C/3,1\} [ \|F^{\Lambda_*}(x^k)\| + \lamm^{-1} \mathcal E_k ] \to 0 \]
as $K \ni k \to \infty$. Let us emphasize that this limit behavior holds for any arbitrary subsequence $(x^k)_K$ converging to $x^*$. Moreover, combining the assumptions (D.2) and (D.3) and as mentioned in \cref{subsec:algo}, it follows that $F^{\Lambda_*}$ is semismooth at $x^*$ with respect to the multifunction $\mathcal M^{\Lambda_*}$. Thus, as in \cite[Proposition 3]{PanQi93}, it can be shown that $x^*$ is an isolated stationary point. Since condition \cref{eq:prf-flam-conv} ensures that every accumulation point of $(x^k)_k$ is a stationary point of problem \cref{eq:prob}, this also proves that $x^*$ is an isolated accumulation point of $(x^k)_k$. Hence, a well-known result by Mor\'{e} and Sorensen \cite{MorSor83} yields convergence of the whole sequence $(x^k)_k$ to $x^*$. 
%
%
%
%
%
%

\textit{Step 3: Acceptance of Newton steps}. Utilizing the assumptions (D.2)--(D.4), and (D.6), there exists $\veps_s > 0$ such that the following properties and inequalities hold simultaneously: \vspace{0.5ex}
%
%
%
%
\begin{itemize} 
\item Let us set $\bar \beta := (12(1+L\lamm^{-1}))^{-1}$. Then, for all $u \in B_{\veps_s}(u^{\Lambda_*}(x^*))$ and all generalized derivatives $D(u) \in \partial \proxt{\Lambda_*}{r}(u)$ we have \vspace{-0.5ex}
\end{itemize}
\be \label{eq:prf-semism} \|\proxt{\Lambda_*}{r}(u) - \proxt{\Lambda_*}{r}(u^{\Lambda_*}(x^*)) - D(u)(u-u^{\Lambda_*}(x^*))\| \leq \bar\beta\gamma_f \|u-u^{\Lambda_*}(x^*)\|. \ee
\begin{itemize}
\item It holds $\|\nabla f(x) - \nabla f(x^*) - \nabla^2 f(x^*)(x-x^*)\| \leq \frac{\lamm\gamma_f}{12} \|x - x^*\|$ and $\|\nabla^2 f(x) - \nabla^2 f(x^*)\| \leq \frac{\lamm\gamma_f}{12}$ for all $x \in B_{\veps_s}(x^*)$. \vspace{.5ex}
\item The objective function $\psi$ is Lipschitz continuous on $B_{\veps_s}(x^*)$ (with constant $L_\psi$). \vspace{.5ex}
\item The stationary point $x^*$ is a global minimizer of $\psi$ on $B_{\veps_s}(x^*)$. \vspace{.5ex}
\end{itemize}
%
%
%
%
%
%
%
%
Now, let us define 
\[ \veps_e := \min \left\{ \frac{2\lamm\veps_s}{3\lamm+2L}, \veps_c  \right \}, \,\, \veps_\psi := \min\left \{\frac{1}{2}(2L_\psi C^{1-p})^{-\frac{1}{p}}, \frac{1}{2L_F}\right\}, \,\, \veps := \min\left\{\veps_e,\veps_\psi \right\}. \] 
Then, since $(x^k)_k$ converges to $x^*$ and (B.2) implies $\Gamma_k \to 0$, there exists $\hat\ell \geq \tilde \ell$ such that $x^k \in B_\veps(x^*)$ and $\min\{(\max\{C,1\})^{-1}\veps,\Gamma_k\} = \Gamma_k$ for all $k \geq \hat\ell$. Next, let $x^k$, $k \geq \hat\ell$, be an arbitrary iterate and let us consider the associated semismooth Newton step $\nstep = x^k + d^k$, $d^k = - M_k^{+}F^{\Lambda_*}_{s^k}(x^k)$ with $M_k = I-D_k + D_k \Lambda_*^{-1}\hfsubk(x^k)$ and $D_k \in \partial \proxt{\Lambda_*}{r}(u^{\Lambda_*}_{s^k}(x^k))$. 
%
%
By \cref{lemma:invertibility} and \cref{eq:uni-bound-mk}, $M_k$ is invertible with $\|M_k^{-1}\| \leq 2C$ 
for all $k \geq \hat \ell$. 
Since the events ${\sf G}_k(\lamm\Gamma_k/2)$ and ${\sf G}_{k+1}^{\sf n}(\lamm\Gamma_{k+1}/2)$ occur for all $k \geq \bar \ell$ by assumption, we can reuse the bounds \cref{eq:bound-cg}, i.e., we have 
\be \label{eq:est-G} \max\{\mathcal E_k, \mathcal E_{k+1}^{\sf g}(\nstep)\} \leq \frac{\lamm}{2} \min \left\{\frac{\veps}{\max\{C,1\}}, \Upsilon_k \right\}. \ee 
%
%
%
%
%
Consequently, setting $w^k := u^{\Lambda_*}_{s^k}(x^k) - u^{\Lambda_*}(x^*)$ and as shown in the proof of \cref{lemma:invertibility}, it holds
\be \label{eq:est-wk} \|w^k\| \leq (1+L\lamm^{-1})\|x^k-x^*\|+\lamm^{-1}\mathcal E_k \leq (1 +L\lamm^{-1})\veps_e + \frac{\veps_e}{2} \leq \veps_s. \ee
Moreover, combining \cref{eq:prf-semism}, $\errhk(x^k) \leq \bar\gamma$, $\bar\beta\gamma_f \leq 1/48$, and the last estimates, we can infer
\begin{align}
\nonumber \|\nstep - x^*\| &= \|M_k^{-1} [F^{\Lambda_*}_{s^k}(x^k) - F^{\Lambda_*}(x^*) - M_k(x^k-x^*)]\| \nonumber \\ 
\nonumber & \leq (4C/3) \|\proxt{\Lambda_*}{r}(u^{\Lambda_*}_{s^k}(x^k)) - \proxt{\Lambda_*}{r}(u^{\Lambda_*}(x^*)) \\
\nonumber &\hspace{6ex}- D_k(I-\Lambda_*^{-1}\hfsubk(x^k))(x^k - x^*)\| \\
\nonumber & \leq (4C/3) \|\proxt{\Lambda_*}{r}(u^{\Lambda_*}(x^*)+w^k) - \proxt{\Lambda_*}{r}(u^{\Lambda_*}(x^*)) - D_k w^k\| \\ 
\nonumber &\hspace{6ex}+ (4C/3) \|D_k\Lambda_*^{-1}[\gfsubk(x^k)-\nabla f(x^*) - \hfsubk(x^k)(x^k-x^*)]\| \\
\nonumber &\leq (4C/3)\bar\beta\gamma_f \|w^k\| + (4C/3)\lamm^{-1} [\|\nabla f(x^k)-\nabla f(x^*) - \nabla^2 f(x^*)(x^k-x^*)\| \\
\nonumber &\hspace{6ex}+ (\|\nabla^2 f(x^k) - \nabla^2 f(x^*)\| + \errhk(x^k))\|x^k-x^*\|)+\mathcal E_k] \\
\nonumber & \leq \frac{4C}{3} \left[\bar\beta (1+L\lamm^{-1}) + \frac{1}{12} + \frac{1}{12} + \frac{1}{2} \right] \gamma_f \|x^k-x^*\| + \frac{4C}{3}\lamm^{-1} \left[\frac{1}{48}+1\right] \mathcal E_k \\
\label{eq:loc-est-1} & \leq C  \gamma_f \|x^k-x^*\| + 1.5C \lamm^{-1}\mathcal E_k. 
\end{align}
%
Let us note that due to \cref{eq:loc-est-1}, \cref{eq:est-G}, $C\gamma_f \leq \frac{1}{4}$, and (D.4), we also have $\nstep \in B_\veps(x^*) \subset \dom~r$. 
The next steps essentially follow the proofs of \cite[Theorem 4.8]{MilUlb14} and \cite[Theorem 4.3.10]{Mil16}. In particular, our last result implies 
\[ \|x^k-x^*\| \leq \|\nstep-x^*\| + \|d^k\|  \leq C \gamma_f  \|x^k-x^*\| + 1.5C\lamm^{-1}\mathcal E_k + (4C/3) \|F^{\Lambda_*}_{s^k}(x^k)\| \]
and thus, it follows
\[ \|x^k - x^*\| \leq \frac{4C}{3(1-C\gamma_f)} \|F^{\Lambda_*}_{s^k}(x^k)\| + \frac{3C\lamm^{-1}}{2(1 - C \gamma_f)} \mathcal E_k. \]
Furthermore, using the Lipschitz continuity of $F^{\Lambda_*}$, $F^{\Lambda_*}(x^*) = 0$, $(1-C\gamma_f)^{-1} \leq \frac{4}{3}$, the definition of $\gamma_f$, and \cref{eq:est-G}, we obtain 
\begin{align*} \| F^{\Lambda_*}_{s^{k+1}}(\nstep) \| & \leq \|F^{\Lambda_*}(\nstep)\| + \|F^{\Lambda_*}_{s^{k+1}}(\nstep) - F^{\Lambda_*}(\nstep)\| \leq L_F \|\nstep - x^*\| + \lamm^{-1} \mathcal E_{k+1}^{\sf g}(\nstep) \\ & \leq  L_F C \gamma_f \|x^k - x^*\| + 1.5L_F C \lamm^{-1} \mathcal E_k + \lamm^{-1} \mathcal E_{k+1}^{\sf g}(\nstep) \\ & \leq \frac{ 4L_F C^2 \gamma_f}{3(1 - C \gamma_f)} \| F^{\Lambda_*}_{s^k}(x^k)\| +  \frac{3L_F C\lamm^{-1}}{2(1 - C\gamma_f)} \mathcal E_k + \lamm^{-1} \mathcal E_{k+1}^{\sf g}(\nstep)  \\ & \leq \eta \|F^{\Lambda_*}_{s^k}(x^k)\| + 2L_FC\lamm^{-1} \mathcal E_k + \lamm^{-1} \mathcal E_{k+1}^{\sf g}(\nstep) \leq \eta \|F^{\Lambda_*}_{s^k}(x^k)\| + \veps_k^1. \end{align*}
Since the Newton step $\nstep$ is contained in $B_{\veps}(x^*)$, we can utilize the semismoothness condition \cref{eq:prf-semism} for $\nstep$. Moreover, \cref{lemma:invertibility} is also applicable for any matrix $M_* = I - \bar D(\nstep) + \bar D(\nstep)\Lambda_*^{-1}\nabla^2 f(x^*)$ with $\bar D(\nstep) \in \partial \proxt{\Lambda_*}{r}(u^{\Lambda_*}(\nstep))$. This yields $\|M_*^{-1}\| \leq (4C)/3$ and similar to the estimates in \cref{eq:loc-est-1} (but simpler), we can get
\begin{align*} \| \nstep - x^*\| & = \|M_*^{-1} [F^{\Lambda_*}(\nstep) - F^{\Lambda_*}(x^*) - M_*(\nstep - x^*) - F^{\Lambda_*}(\nstep)]\| \\ 
& \leq  (4C/3)[  \|\proxt{\Lambda_*}{r}(u^{\Lambda_*}(\nstep)) - \proxt{\Lambda_*}{r}(u^{\Lambda_*}(x^*)) - \bar D(\nstep)(I-\Lambda_*^{-1}\nabla^2 f(x^*)) \\ 
& \hspace{4ex} \cdot (\nstep - x^*)\| +  \|F^{\Lambda_*}_{s^{k+1}}(\nstep) \| + {\lamm^{-1}} \mathcal E_{k+1}^{\sf g}(\nstep)] \\
& \leq(4C/3) [ \|\proxt{\Lambda_*}{r}(u^{\Lambda_*}(\nstep)) - \proxt{\Lambda_*}{r}(u^{\Lambda_*}(x^*)) - \bar D(\nstep)\bar w^k\| + \|F^{\Lambda_*}_{s^{k+1}}(\nstep) \|  \\
& \hspace{4ex} +\|\bar D(\nstep)\Lambda^{-1}_* [\nabla f(\nstep) - \nabla f(x^*) - \nabla^2 f(x^*)(\nstep - x^*)]\|   + {\lamm^{-1}} \mathcal E_{k+1}^{\sf g}(\nstep)] \\
& \leq \frac{2}{9} C\gamma_f \|\nstep - x^*\| + \frac{4C}{3} \|F^{\Lambda_*}_{s^{k+1}}(\nstep) \| + \frac{4C}{3} \lamm^{-1} \mathcal E_{k+1}^{\sf g}(\nstep), \end{align*}
where we used $\bar w^k := u^{\Lambda_*}(\nstep) - u^{\Lambda_*}(x^*)$, $\|\bar w^k\| \leq (1+L\lamm^{-1})\|\nstep - x^*\|$, \cref{eq:prf-semism}, and the differentiability of $\nabla f$. This implies 
\[ \|\nstep - x^*\| \leq \frac{12C}{9 - 2C\gamma_f} \|F^{\Lambda_*}_{s^{k+1}}(\nstep)\| + \frac{12C\lamm^{-1}}{9 - 2C\gamma_f} \mathcal E_{k+1}^{\sf g}(\nstep). \] 
Finally, using condition (D.4), $(9-2C\gamma_f)^{-1} \leq \frac{1}{8}$, the subadditivity of the mapping $x\mapsto x^q$, $q \in \{p,1-p\} \subset (0,1)$, $0.5^q + 1.5^q \leq 2$ for all $q \in [0,1]$, and the fact that the stationary point $x^*$ is a global minimum of the problem $\min_ {x \in B_{\veps}(x^*)}~\psi(x)$, it follows 
\begin{align*} \psi(\nstep) - \psi(x^k) & \\ &\hspace{-12ex}\leq \psi(\nstep) - \psi(x^*) \leq L_\psi \|\nstep - x^*\| = L_\psi \|\nstep - x^* \|^{1-p}  \|\nstep - x^*\|^{p} \\ &\hspace{-12ex}\leq  L_\psi C^{1-p} [\gamma_f^{1-p} \|x^k - x^*\|^{1-p} + (1.5\lamm^{-1}\mathcal E_k)^{1-p}] \|\nstep-x^*\|^p \\ &\hspace{-12ex}\leq L_\psi C^{1-p} \left[ (2C\gamma_f \|F^{\Lambda_*}_{s^k}(x^k)\|)^{1-p} + ((2C\gamma_f)^{1-p}+1.5^{1-p})(\lamm^{-1} \mathcal E_k)^{1-p} \right] \|\nstep-x^*\|^p \\ &\hspace{-12ex} \leq L_\psi (2C^2\gamma_f)^{1-p} \|F^{\Lambda_*}_{s^k}(x^k)\|^{1-p} [ (1.5C)^p \|F^{\Lambda_*}_{s^{k+1}}(\nstep)\|^p + (1.5C\lamm^{-1} \mathcal E_{k+1}^{\sf g}(\nstep))^{p} ] \\ & \hspace{-10ex} + 2L_\psi(C\lamm^{-1})^{1-p}\mathcal E_k^{1-p} \|\nstep-x^*\|^p \\ & \hspace{-12ex} \leq 2L_\psi C (C\gamma_f)^{1-p} \|F^{\Lambda_*}_{s^k}(x^k)\|^{1-p} \|F^{\Lambda_*}_{s^{k+1}}(\nstep)\|^p + 0.5{\veps^2_k}\\ & \hspace{-10ex} + 2L_\psi C(C \gamma_f)^{1-p}(L_F \|x^k -x^*\| + \lamm^{-1} \mathcal E_k)^{1-p} \lamm^{-p} [\mathcal E_{k+1}^{\sf g}(\nstep)]^p \\ & \hspace{-12ex} \leq  \beta \|F^{\Lambda_*}_{s^k}(x^k)\|^{1-p} \|F^{\Lambda_*}_{s^{k+1}}(\nstep)\|^p + \frac{\veps^2_k}{2} + \half (2^{p-1} + 2^{p-1}\mu_p(\veps^2_k)^{1-p}) 2^{-p} \mu_p(\veps^2_k)^p \\ & \hspace{-12ex} \leq \beta \|F^{\Lambda_*}_{s^k}(x^k)\|^{1-p} \|F^{\Lambda_*}_{s^{k+1}}(\nstep)\|^p + \veps^2_k . \end{align*}
%

\textit{Step 4: Transition to fast local convergence}. Let us point out that the bounds and inequalities derived in the last part primarily depend on the occurrence of the events ${\sf G}_k$ and ${\sf H}_k$ and hold for any $k \geq \hat\ell$. Now, let $k$ be any index $k \geq \hat \ell$ with $ \|F^{\Lambda_*}_{s^k}(x^k)\| \leq \theta_k$. Since the algorithm does not terminate after a finite number of steps and we have $F^{\Lambda_*}_{s^k}(x^k) \to 0$, there exist infinitely many such indices. Let $\ell_* - 1$ be the smallest such index. Then, as shown in step 3, the semismooth Newton step $z^{\ell_*-1}_{\sf n}$ satisfies all of the acceptance criterions and it follows $x^{\ell_*} = z^{\ell_*-1}_{\sf n}$ and $\theta_{\ell_*} = \|F^{\Lambda_*}_{s^{\ell_*}}(x^{\ell_*})\|$. Inductively, this implies $x^{k+1} = \nstep$ for all $k \geq \ell_* - 1$. Since the success probability of the event ${\sf E}$ is at least $\delta_*$, this finishes the proof of the first part. 

\textit{Step 5: Proof of part (ii)}. Revisiting the derivations in step 1, it is easy to see that the event ${\sf E}^\circ := \bigcap_{k=\bar\ell}^\infty {\sf G}_k(\lamm\Gamma_k^\circ/2) \cap {\sf H}_k(\lamm\gamma_f/2)$ also occurs with probability $\delta_*$. Moreover, due to ${\sf E}^\circ \subset {\sf E}$, all of the results and inequalities shown in step 2--4 remain valid. 
In particular, the estimate \cref{eq:loc-est-1} holds for all $k \geq \ell_* - 1$ with $x^{k+1} = \nstep$ and using $L_F \geq 1$, we obtain
\[ \|x^{k+1} - x^*\| \leq C\gamma_f \|x^k-x^*\| + \frac{3}{4}C \Upsilon_k^\circ \leq \half \left [ \half \|x^k - x^*\| + \gamma_\eta^{k-\bar\ell} \right]. \]
Let us set $\bar\tau :=\max\{\|x^{\ell_*-1}-x^*\|,\gamma_\eta^{\ell_*-\bar\ell-2}\} $ and $\tau_k := \bar\tau \max \{\half,\gamma_\eta\}^{k-\ell_*+1}$. We now prove $\|x^k - x^*\| \leq \tau_k$ for all $k \geq \ell_*-1$ by induction. For $k = \ell_*-1$, it follows $\tau_{\ell_*-1} = \bar\tau$ and the latter inequality is obviously true. Furthermore, it holds
\begin{align*} \|x^{k+1}-x^*\| & \leq {\textstyle\half}\max\{{\textstyle\half},\gamma_\eta\} [ \tau_k + \gamma_\eta^{k-\bar \ell-1}] \\ & \leq {\textstyle\half}\max\{{\textstyle\half},\gamma_\eta\}^{k-\ell_*+2} [\bar\tau + \gamma_\eta^{\ell_*-\bar\ell-2}] \leq \tau_{k+1}.  \end{align*}
Consequently, due to $\tau_k \to 0$ and $\tau_{k+1}/\tau_k = \max\{\half,\gamma_\eta\} < 1$,
%
%
the sequence $(x^k)_k$ converges r-linearly to $x^*$ with rate $\max\{\half,\gamma_\eta\}$.

\textit{Step 6: Proof of part (iii)}. Again, following our previous discussions, it can be easily shown that the event ${\sf E}^\diamond := \bigcap_{k=\bar\ell}^\infty {\sf G}_k(\lamm\Gamma_k^\diamond/2) \cap {\sf H}_k(\bar\rho\sqrt{\kappa_n\rho_k})$ occurs with probability $\delta_*$. Since $(\rho_k)_k$ converges to zero, we have ${\sf H}_k(\bar\rho\sqrt{\kappa_n\rho_k}) \subset {\sf H}_k(\lamm\gamma_f/2)$ for all $k$ sufficiently large and hence, the results derived in step 2--4 still hold after possibly adjusting the constant $\ell_*$. In particular, we have $x^k \to x^*$ as $k\to \infty$. Next, let us set
\begin{align*} \beta^1_k & = \frac{\|\proxt{\Lambda_*}{r}(u^{\Lambda_*}(x^*)+w^k) - \proxt{\Lambda_*}{r}(u^{\Lambda_*}(x^*)) - D_k w^k\|}{\|w^k\|}, \\ \beta_k^2 & = \frac{\|\nabla f(x^k) - \nabla  f(x^*) - \nabla^2 f(x^*)(x^k-x^*)\|}{\|x^k-x^*\|},  \end{align*}
and $\beta_k^3 = \|\nabla^2 f(x^k) - \nabla^2 f(x^*)\| + \errhk(x^k)$, where $w^k$ and $D_k$ have been defined in step 3. Since $(w^k)_k$ converges to zero, the semismoothness of $\proxt{\Lambda_*}{r}$ implies $\beta_k^1 \to 0$. Moreover, by the differentiability of $f$ and using $\rho_k \to 0$, we obtain $\beta^2_k \to 0$ and $\beta_k^3 \to 0$.
%
%
Without loss of generality, we now may assume $x^{k+1} = \nstep$ for all $k \geq \ell_* - 1$ and as a consequence, by \cref{eq:est-wk} and \cref{eq:loc-est-1} and defining $\vartheta_k := (4C/3)\lamm^{-1}[(\lamm+L)\beta_k^1 + \beta_k^2 + \beta_k^3]$, we have
\begin{align*} \|x^{k+1} - x^*\| &\leq ({4C}/3)\lamm^{-1} [(\lamm+L)\beta_k^1 + \beta_k^2+\beta_k^3] \|x^k - x^*\| + ({4C}/3)\lamm^{-1}  (\bar\beta\gamma_f+1) \mathcal E_k \\ & \leq \vartheta_k \|x^k - x^*\| + (3C/4)\Upsilon_k^\diamond \leq  \vartheta_k \|x^k - x^*\| + \gamma_k^{k-\bar\ell} \end{align*}
for all $k \geq \ell_*-1$. Next, due to $\vartheta_k,\gamma_k \to 0$, there exists a constant $\ell_\diamond \geq \ell_*$ such that $\vartheta_k, \gamma_k \leq \half$ for all $k \geq \ell_\diamond$. Let us set
\[ \tau_{\ell_\diamond+1} := \max\left\{\|x^{\ell_\diamond}-x^*\|,\gamma_{\ell_\diamond}^{(\ell_\diamond-\bar\ell)/2}\right\}, \,\, \tau_{k+1} := \max\left\{(\vartheta_k+\gamma_k^{(k-\bar\ell)/2})\tau_k,\gamma_{k+1}^{(k+1-\bar\ell)/2}\right\} \] 
for all $k > \ell_\diamond$. Then, by induction and by using $\tau_k \geq \gamma_k^{(k-\bar\ell)/2}$, it follows 
\[ \|x^{k+1}-x^*\| \leq \vartheta_k \|x^k-x^*\| + \gamma_k^{(k-\bar\ell)/2}\tau_k \leq (\vartheta_k + \gamma_k^{(k-\bar\ell)/2}) \tau_k \leq \tau_{k+1} \]
for all $k > \ell_\diamond$. Due to $\vartheta_k,\gamma_k \leq \half$, this also establishes $\|x^{\ell_\diamond+1}-x^*\| \leq \tau_{\ell_{\diamond}+1}$. Finally, utilizing the boundedness of $(\tau_k)_{k>{\ell_\diamond}}$ and the monotonicity of $(\gamma_k)_k$, we obtain $\tau_k \to 0$ and
\[ \frac{\tau_{k+1}}{\tau_k} = \max\left\{ \vartheta_k + \gamma_k^{\frac{k-\bar\ell}{2}}, \gamma_{k+1}^{\half} \left[\frac{\gamma_{k+1}}{\gamma_k}\right]^{{(k-\bar\ell)}/{2}} \right\} \leq \max\left\{ \vartheta_k + \gamma_k^{\frac{k-\bar\ell}{2}}, \gamma_{k+1}^{\half} \right\} \to 0, \]
which concludes the proof of \cref{theorem:mega}. 
\end{proof}

Neglecting the dependence on $\delta_k$ and on $\veps_k^1$, $\veps_k^2$ for a moment, the results in \cref{theorem:mega} can be summarized as follows. In order to guarantee r-linear convergence of the sequence $(x^k)_k$, it suffices to increase the sample size $\ngk$ at a geometric rate and to choose $\nhk$ sufficiently large. If $\ngk$ is increased at a rate that is faster than geometric and we have $\nhk \to \infty$, then we obtain r-superlinear convergence with high probability. Similar results were established in \cite{BBN2016,RKM2016II,YeLuoZha17} for stochastic Newton-type methods for smooth optimization problems. Clearly, the rate of convergence in \cref{theorem:mega} (ii) can be further improved if $\gamma_f$ is adjusted appropriately. Moreover, if the full gradient is used in the algorithm eventually, i.e., we have $\gfsubk(x^k) \equiv \nabla f(x^k)$ for all $k$ sufficiently large, then the gradient related error terms $\mathcal E_k^{\sf g}$ vanish and we can derive q-linear and q-superlinear convergence, respectively. We continue with an additional remark.

\begin{remark} \label{remark:proj-loc} Similar to \cref{remark:example-c3}, let us again assume that the function $r$ has the special form $r = \iota_{\mathcal C} + \varphi$, where $\varphi$ is a real-valued, convex mapping and $\iota_{\mathcal C}$ is the indicator function of a closed, convex, and nonempty set $\mathcal C$. In this case, as has already been discussed in \cref{sec:algo}, we can perform a projected Newton step $z^k = \mathcal P_{\dom~r}(x^k+d^k) = \mathcal P_{\mathcal C}(x^k+d^k)$ to obtain a feasible trial point $z^k \in \dom~r$. Due to the nonexpansiveness of the projection, this additional operation does also not affect our local convergence results. Moreover, since $\varphi$ is locally Lipschitz continuous, assumption {\rm(D.4)} is no longer needed in this situation. \end{remark}

Based on the ``light tail'' assumption (E.3), we now present a straight-forward variant of \cref{theorem:mega} with an improved dependence on the probabilities $\delta_k$.

\begin{corollary} \label{corollary:mega} Consider the setup discussed in \cref{theorem:mega} and let us assume that the conditions in \cref{theorem:mega} are fulfilled. Suppose that assumption {\rm(E.3)} is satisfied. Then, the statements in \cref{theorem:mega} {\rm(i)} hold under the following improved sample size bounds
\be \label{eq:bounds-iv} \ngk \geq \left[\left(1+\sqrt{3\log(\delta_k^{-1}})\right)\frac{2\bar\sigma}{\lamm\Gamma_k}\right]^2, \quad \nhk \geq 3\log(2n\delta_k^{-1})\left[ \frac{2\bar\rho}{\lamm\gamma_f} \right]^2, \quad k \geq \bar \ell.  \ee
Furthermore, if \cref{eq:bounds-iv} holds with $\Gamma_k \equiv \Gamma_k^\circ$ or with $\Gamma_k \equiv \Gamma_k^\diamond$ and $\nhk \geq \log(2n\delta_k^{-1})\rho_k^{-1}$, then the statements in \cref{theorem:mega} {\rm(ii)} and {\rm(iii)} are satisfied, 
respectively. 
\end{corollary}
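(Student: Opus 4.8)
The plan is to observe that the proof of \cref{theorem:mega} is organized so that Steps~2--6 depend only on the \emph{occurrence} of the concentration events ${\sf G}_k$ and ${\sf H}_k$, and never on the particular inequalities used to bound their probabilities. Consequently, under assumption {\rm(E.3)} only \emph{Step~1} (the probability bounds) of that proof needs to be revisited: I would replace the polynomial Markov-type estimates from \cref{lemma:tail-bound}~(i) and~(ii) by their ``light tail'' counterparts and check that the sharper sample-size requirements \cref{eq:bounds-iv} still deliver the per-iteration success probabilities $1-\delta_k$ and $1-2\delta_k$. Everything downstream---convergence of $(x^k)_k$, acceptance of the Newton steps, and the r-linear/r-superlinear rates---then carries over verbatim.

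First I would treat the gradient events. Identifying ${\sf X}_i \equiv \ngk^{-1}(\cG(z^{k-1}_{\sf n},s^k_i) - \nabla f(z^{k-1}_{\sf n}))$ exactly as in the paragraph following \cref{lemma:tail-bound}, and arguing in complete analogy with the verification there, assumption {\rm(E.3)} supplies the exponential hypothesis $\Exp[\exp(\|{\sf X}_i\|^2/\bar\sigma_i^2) \mid \mathcal F_{k-1}] \leq \exp(1)$ with $\bar\sigma_i = \bar\sigma\,\ngk^{-1}$, so the light-tail bound in \cref{lemma:tail-bound}~(i) applies with effective deviation $\|\bar\sigma\| = \bar\sigma\,\ngk^{-1/2}$. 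Choosing $\tau = \sqrt{3\log(\delta_k^{-1})}$ gives $\exp(-\tau^2/3) = \delta_k$, and imposing $(1+\tau)\bar\sigma\,\ngk^{-1/2} \leq \lamm\Gamma_k/2$ reproduces precisely the first inequality of \cref{eq:bounds-iv}; hence $\Prob({\sf G}^{\sf n}_k(\lamm\Gamma_k/2) \mid \mathcal F_{k-1}) \geq 1-\delta_k$, with the identical bound for ${\sf G}^{\sf p}_k$.

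Next I would handle the Hessian event using the matrix light-tail inequality \cref{eq:mat-bound-light}, where the prefactor is $2n$ rather than $\kappa_n$. Setting $\tau = \sqrt{3\log(2n\delta_k^{-1})}$ yields $2n\exp(-\tau^2/3) = \delta_k$, and the threshold condition $\tau\bar\rho\,\nhk^{-1/2} \leq \lamm\gamma_f/2$ gives the second inequality of \cref{eq:bounds-iv}, so $\Prob({\sf H}_k(\lamm\gamma_f/2) \mid \hat{\mathcal F}_{k-1}) \geq 1-\delta_k$. With these three per-iteration bounds, the inclusion--exclusion estimate ${\mathds 1}_{{\sf G}_k} \geq {\mathds 1}_{{\sf G}^{\sf n}_k} + {\mathds 1}_{{\sf G}^{\sf p}_k} - 1$ together with the tower-property telescoping of Step~1 in \cref{theorem:mega} yields $\Prob({\sf E}) \geq \prod_{k=\bar\ell}^\infty(1-2\delta_k)(1-\delta_k) = \delta_*$, completing part~{\rm(i)}.

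Parts~{\rm(ii)} and~{\rm(iii)} follow by the same substitution applied to the events ${\sf E}^\circ$ and ${\sf E}^\diamond$, replacing $\Gamma_k$ by $\Gamma_k^\circ$, respectively $\Gamma_k^\diamond$, in the gradient bound; for the r-superlinear case I would invoke \cref{eq:mat-bound-light} with the choice $\nhk \geq \log(2n\delta_k^{-1})\rho_k^{-1}$, which forces the Hessian error $\errhk(x^k)$ to be of order $\bar\rho\sqrt{\rho_k} \to 0$ and thus recovers the shrinking-threshold event driving Step~6. I do not anticipate a genuine obstacle, as the argument is a direct exchange of tail inequalities; the only point requiring care is the bookkeeping of the additive $(1+\tau)$ form of the vector light-tail bound---which is exactly what produces the $1+\sqrt{3\log(\delta_k^{-1})}$ factor in the gradient sample size---versus the purely multiplicative $\tau$ form of \cref{eq:mat-bound-light}, which yields the clean $3\log(2n\delta_k^{-1})$ factor for the Hessian.
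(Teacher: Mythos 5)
Your proposal is correct and matches the paper's intent exactly: the paper's own proof consists of the single remark that \cref{corollary:mega} follows directly by applying \cref{lemma:tail-bound}, i.e., by substituting the light-tail bounds for the Markov-type bounds in Step~1 of the proof of \cref{theorem:mega}, which is precisely what you carry out. Your calibrations of $\tau$ (additive $(1+\tau)$ form for the vector bound, multiplicative $\tau$ form with prefactor $2n$ for the matrix bound) correctly reproduce the sample-size requirements \cref{eq:bounds-iv} and the vanishing Hessian-error threshold of order $\bar\rho\sqrt{\rho_k}$ needed for part (iii), and your observation that Steps~2--6 depend only on the occurrence of the events ${\sf G}_k$, ${\sf H}_k$ is the key structural point.
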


\cref{corollary:mega} can be shown directly by applying \cref{lemma:tail-bound}. Finally, let us note that our local results can be further improved in the situation that was considered in \cref{theorem:conv-prox-strong}. We conclude this section with an example and discuss a specific choice of the parameters and sample sizes satisfying the assumptions in \cref{theorem:mega} and \cref{corollary:mega}.

\begin{example} Suppose that the conditions in \cref{corollary:mega} are satisfied and let $C_1, C_2 > 0$, and $\varpi > 0$ be given. Let the parameter sequences $(\veps_k^1)_k$, $(\veps_k^2)_k$ be chosen via
\[ \veps_1^k = {C_1}{k^{-(2+\frac{\varpi}{4})}}, \quad \veps_2^k = {C_2}{k^{-(1+\frac{\varpi}{8})}}, \quad \forall~k \in \N, \]
and let us set $p = \half$ and $\delta_k = \frac{1}{2k^{8}}$. Then, setting $\ngk =k^{4+\varpi}\log(k)$ and $\nhk = \log(k)^{1+\varpi}$, it can be shown that the statements in \cref{theorem:mega} {\rm(i)} hold with probability 
\[ \textstyle\delta_* \geq \left[\prod_{k=2}^\infty \left(1-\frac{1}{k^{8}}\right)\right]^2 = \left[\frac{\sinh(\pi)(\cosh(\pi\sqrt{2})-\cos(\pi\sqrt{2}))}{16\pi^3}\right]^2 \geq 0.99, \]
see \cite{Wei17}. Additionally, if the gradient sample size increases geometrically, i.e., if we have $\ngk = C_3 \ell^{k}$ for some $\ell > 1$, $C_3 > 0$, the conditions in \cref{theorem:mega} {\rm(ii)} are satisfied and we can guarantee r-linear convergence with rate $\max\{\half,\frac{1}{\ell}\}$ with probability $99\%$.
\end{example}


\section{Numerical Results}\label{sec:numerical} 
In this section, we demonstrate the efficiency of the proposed stochastic semismooth Newton framework and compare it with several state-of-the-art algorithms on a variety of test problems. All numerical experiments are performed using MATLAB R2017b on a desktop computer with Intel(R) Core(TM) i7-7700T 2.90GHz and 8GB memory.

\subsection{Logistic Regression} \label{sec:logloss}
%
%
In our first experiment, we consider the well-known empirical $\ell_1$-logistic regression problem
%
\be\label{eq:LR}
\min_{x\in\R^n}~\psi(x) := f(x)+\mu\|x\|_1,\quad f(x):=\frac{1}{N}\sum_{i=1}^Nf_i(x),
\ee
where $f_i(x):=\log(1+\exp(-b_i \cdot \iprod{a_i}{x}))$ denotes the logistic loss function and the data pairs $(a_i,b_i) \in \Rn \times \{-1,1\}$, $i \in [N]$, correspond to a given dataset or are drawn from a given distribution. 
The regularization parameter  $\mu > 0$ controls the level of sparsity of a solution of problem \cref{eq:LR}. In our numerical tests, we always choose $\mu=0.01$. 

\subsubsection{Algorithmic details and implementation} 
Next, we describe the implementational details of our method and of the state-of-the-art algorithms used in our numerical comparison. 

\textit{Stochastic oracles.} In each iteration and similar to other stochastic second order methods, \cite{BCNN2011,BBN2016,BHNS2016,RKM2016I}, we generate stochastic approximations of the gradient and Hessian of $f$ via first selecting two sub-samples $\cS_k, \mathcal T_k \subset [N]$ uniformly at random and without replacement from the index set $\{1,...,N\}$. We then define the following mini-batch-type stochastic oracles
\be \label{eq:exp-oracle} \gfsubk(x) := \frac{1}{|\cS_k|} \sum_{i \in \cS_k} \nabla f_i(x), \quad \hfsubk(x) := \frac{1}{|\mathcal T_k|} \sum_{j \in \mathcal T_k} \nabla^2 f_j(x). \ee
%
We refer to the variant of Algorithm \ref{alg:ssn} using the stochastic oracles \cref{eq:exp-oracle} as S4N (\textbf{s}ub-\textbf{s}ampled \textbf{s}emi\textbf{s}mooth \textbf{N}ewton method). Furthermore, motivated by the recent success of variance reduction techniques \cite{JZ2013, XZ2014, RHSPS2016, AZH2016, WMGL2017}, we will also work with a variance reduced stochastic gradient that can be calculated as follows
\be \label{eq:var-oracle} \begin{cases} {\scriptsize \textbf{1}} \quad \textbf{if} \quad {k}~\mathrm{mod}~{m} = 0 \quad \textbf{then} \quad \text{set $\tilde x := x^k$ and calculate $\tilde u := \nabla f(\tilde x)$.} \\ {\scriptsize \textbf{2}} \quad \text{Compute $\gfsubk(x^k) := \frac{1}{|\cS_k|} \sum_{i \in \cS_k} (\nabla f_i(x^k) - \nabla f_i(\tilde x)) + \tilde u$.} \end{cases} \ee
Here, $k \in \N$ is the current iteration and $m \in \N$ denotes the number of iterations after which the full gradient $\nabla f$ is evaluated at the auxiliary variable $\tilde x$. As in \cite{RHSPS2016, AZH2016, WMGL2017}, this additional noise-free information is stored and utilized in the computation of the stochastic oracles for the following iterations.  

\textit{Overview of the tested methods.} 

\begin{itemize}
\item \textbf{Adagrad}, \cite{DHS2011}. Adagrad is a stochastic proximal gradient method with a specific strategy for choosing the matrices $\Lambda_k$. We use the mini-batch gradient \cref{eq:exp-oracle} as first order oracle in our implementation. This leads to the following update rule
\end{itemize}
\be \label{eq:adagrad-update} x^{k+1} = \proxt{\Lambda_k}{\vp}(x^k - \Lambda_k^{-1} \gfsubk(x^k)), \quad \Lambda_k := \lambda^{-1} \diag(\delta\mathds 1 + \sqrt{G_k}), \ee
\begin{itemize}
\item[] where $\delta, \lambda > 0$, $G_k := G_{k-1} + \gfsubk(x^k) \odot \gfsubk(x^k)$, and the multiplication ``$\odot$'' and the square root ``$\sqrt{\,\cdot\,}$'' are performed component-wise.
\item \textbf{prox-SVRG}, \cite{XZ2014}. Prox-SVRG is a variance reduced, stochastic proximal gradient method. Similar to  \cite{RHSPS2016,RSPS2016,WZ2017}, we substitute the basic variance reduction technique proposed in \cite{JZ2013,XZ2014} with the mini-batch version \cref{eq:var-oracle} to improve the performance of prox-SVRG. \vspace{.5ex}
\item \textbf{S2N-D}. S2N-D is the deterministic version of the stochastic semismooth Newton method using the full gradient and Hessian of $f$ instead of stochastic oracles. \vspace{.5ex}
\item \textbf{S4N-HG}. S4N with both sub-sampled gradient and Hessian \cref{eq:exp-oracle}. In the numerical experiments, the maximum sample size $|\cS_k|$ of the stochastic oracle $G_{s^k}$ is limited to 10\%, 50\% and 100\% of the training data size $N$, respectively. \vspace{.5ex}
\item \textbf{S4N-H}. This version of S4N uses the full gradient $\nabla f$ and the sub-sampled Hessian $H_{t^k}$ as defined in \cref{eq:exp-oracle}. \vspace{.5ex}
\item \textbf{S4N-VR}. S4N-VR is a variant of S4N combining the variance reduced stochastic oracle \cref{eq:var-oracle} with the basic sub-sampling strategy \cref{eq:exp-oracle} for the Hessian of $f$. \vspace{.5ex}
\end{itemize}
Adagrad and prox-SVRG are two popular and efficient first order stochastic optimization approaches for solving nonsmooth and possibly nonconvex problems of the form \cref{eq:prob}. We compare them with four different versions of our S4N method: S2N-D (deterministic), S4N-HG (sub-sampled gradient and Hessian), S4N-H (full gradient and sub-sampled Hessian), and S4N-VR (variance reduced stochastic gradient and sub-sampled Hessian). 

\textit{Implementational details.} For Adagrad, the sample size $|\cS_k|$ of the stochastic gradient is fixed to 5\% of the training data size $N$ and  we set $\delta = 10^{-7}$. The parameter $\lambda$ varies for the different tested datasets and is chosen from the set $\{i\cdot10^j : i \in [9], j \in \{-2,-1,0,1\}\}$ to guarantee optimal performance. The iterative scheme of prox-SVRG basically coincides with \cref{eq:adagrad-update}. Here, we also use a fixed sample size $|\cS_k| = \lfloor 0.01 N \rfloor$ and we set $m = 10$. The parameter matrix $\Lambda_k$ is defined via $\Lambda_k := (1/\lambda_k)I$ and based on the full gradient values $\nabla f(\tilde x)$, $\lambda_k$ is chosen adaptively to approximate the Lipschitz constant of the gradient $\nabla f$. 

In S4N-HG, the initial sample size of the stochastic gradient is set to $|\cS_0| = \lfloor 0.01 N \rfloor$. The size of the mini-batch $\cS_k$ is then increased by a factor of 3.375 every 30 iterations until $|\cS_k|$ reaches the maximum sizes $\lfloor 0.1 N \rfloor$, $\lfloor 0.5 N \rfloor$, and $N$, respectively. In the following, we will use S4N-HG 10\%, S4N-HG 50\%, and S4N-HG 100\% to denote the different variants of S4N-HG. In S4N-VR, we use the fixed sample size $|\cS_k| = \lfloor 0.01 N \rfloor$ for all $k$ and $m = 6$. The mini-batch sizes of the stochastic Hessians are adjusted in a similar way. More specifically, in S4N-HG, S4N-H, and S4N-VR, we first set $|\mathcal T_0| = \lfloor 0.01 N \rfloor$. As soon as the sample $\cS_k$ reaches its maximum size, we repeatedly increase the size of the set $\mathcal T_k$ by a factor of 3.375 after 15 iterations. The upper limit of the Hessian sample size is set to 10\% of the training data size, i.e., we have 
\[ |\mathcal T_k| \leq t_{\max}, \quad t_{\max} := \lfloor 0.1 N \rfloor, \quad \forall~k. \] 
In S4N-HG 10\% and different from the other methods, the size of $\mathcal T_k$ is not changed, i.e., it holds $t_{\max} = \lfloor 0.01 N \rfloor$.
As in prox-SRVG, we use $\Lambda_k := (1/\lambda_k)I$ and choose $\lambda_k$ adaptively to estimate the Lipschitz constant of the gradient. In particular, we compute 
\[ \lambda_k^1 = \frac{\|x^k - x^{k-1}\|}{\|G_{s^k}(x^k) - G_{s^{k-1}}(x^{k-1})\|}, \quad \lambda_k^2 = \max\{10^{-3},\min\{10^4,\lambda_k^1\}\}. \] 
In order to prevent outliers, we calculate a weighted mean of $\lambda_k^2$ and of the previous parameters $\lambda_j$, $j \in [k-1]$. This mean is then used as the new step size parameter $\lambda_k$. The initial step size is set to $\lambda_0 = 0.1$. 

The proximity operator of the $\ell_1$-norm has the explicit representation $\proxt{\Lambda_k}{\mu\|\cdot\|_1}(u) = u - \mathcal P_{[-\mu\lambda_k,\mu\lambda_k]}(u)$ and is also known as the shrinkage operator or soft-thresholding function. Similar to \cite{MilUlb14,PatSteBem14,XLWZ2016}, we will work with the following generalized Jacobian of $\proxt{\Lambda_k}{\mu\|\cdot\|_1}$ at some $u \in \Rn$  
%
%
%
%
%
\[ D(u) := \diag(d(u)), \quad d(u) \in \Rn, \quad d(u)_i := \begin{cases} 1 & |u_i| > \mu\lambda_k, \\ 0 & \text{otherwise.} \end{cases} \]
The generalized derivatives of $\Fsubk$ are then built as in \cref{eq:gen-deriv}. As described in \cite{MilUlb14,PatSteBem14,XLWZ2016}, we can exploit the structure of the resulting semismooth Newton system and reduce it to a smaller and symmetric linear system of equations. We utilize an early terminated conjugate gradient (CG) method to solve this system approximately. The maximum number of iterations and the desired accuracy of the CG method are adjusted adaptively depending on the computed residual $\|\Fsubk(x^k)\|$. When the residual is large, only few iterations are performed to save time. The initial relative tolerance and the initial maximum number of iterations are set to 
 0.01 and 2, respectively. The total maximum number of CG-iterations is restricted to 12.   
In order to numerically robustify the computation of the Newton step $\nstep$, we also consider the following, regularized version of the Newton system
\[\left(M_k + \rho_k I\right) \cdot d^k = -\Fsubk(x^k), \quad M_k \in \dsetMk(x^k), \]
where $\rho_k>0$ is a small positive number. We adjust $\rho_k$ according to the norm of the residual $\Fsubk(x^k)$ so that $\rho_k \to 0$ as $\|\Fsubk(x^k)\| \to 0.$

Finally, in our implementation of S4N, we only check the first growth condition \cref{eq:growth-1} to measure the quality of the Newton step $\nstep$. Although both growth conditions \cref{eq:growth-1} and \cref{eq:growth-2} are generally required to guarantee global convergence, this adjustment does not affect the globalization process and convergence of S4N in the numerical experiments. Moreover, as we have shown in \cref{theorem:conv-prox-strong} and in \cref{theorem:mega}, the condition \cref{eq:growth-2} is actually not necessary for strongly convex problems and it is satisfied locally close to a stationary point of problem \cref{eq:prob} under certain assumptions. These different observations motivate us to restrict the acceptance test of the semismooth Newton steps to the cheaper condition \cref{eq:growth-1}. We use the following parameters  $\eta = 0.85$, $\nu_k =\veps_k^1 = c_\nu k^{-1.1}$, $c_\nu = 500$, and $\alpha_k = 10^{-2}$. 

  

\subsubsection{Numerical comparison}
The datasets tested in our numerical comparison are summarized in \cref{table:datasets}. We linearly scale the entries of the data-matrix $ (a_1,...,a_N)$ to $[0,1]$ for each dataset. The datasets for multi-class classification have been manually divided into two types or features. For instance, the MNIST dataset is used for classifying even and odd digits. For all methods, we choose $x^0 = 0$ as initial point.

\begin{table}[t]
\centering
\begin{tabular}{|c||c|c|c|c|}
\hline
Data Set & Data Points $N$ & Variables $n$ & {Density} & Reference \\
\hline
$\mathtt{CINA}$ & 16033 & 132 & 29.56\% &\cite{CINA}\\
$\mathtt{gisette}$ & 6000 & 5000 & 12.97\% &\cite{gisette}\\
$\mathtt{MNIST}$ & 60000 & 784 & 19.12\% &\cite{MNIST}\\
$\mathtt{rcv1}$ & 20242 & 47236 & 0.16\% &\cite{RCV1}\\
\hline
\end{tabular}
\caption{A description of binary datasets used in the experiments}
\label{table:datasets}
\end{table}


\begin{figure}
\centering
\begin{tabular}{cc}
\subfloat[$\mathtt{CINA}$]{
\includegraphics[width=6cm]{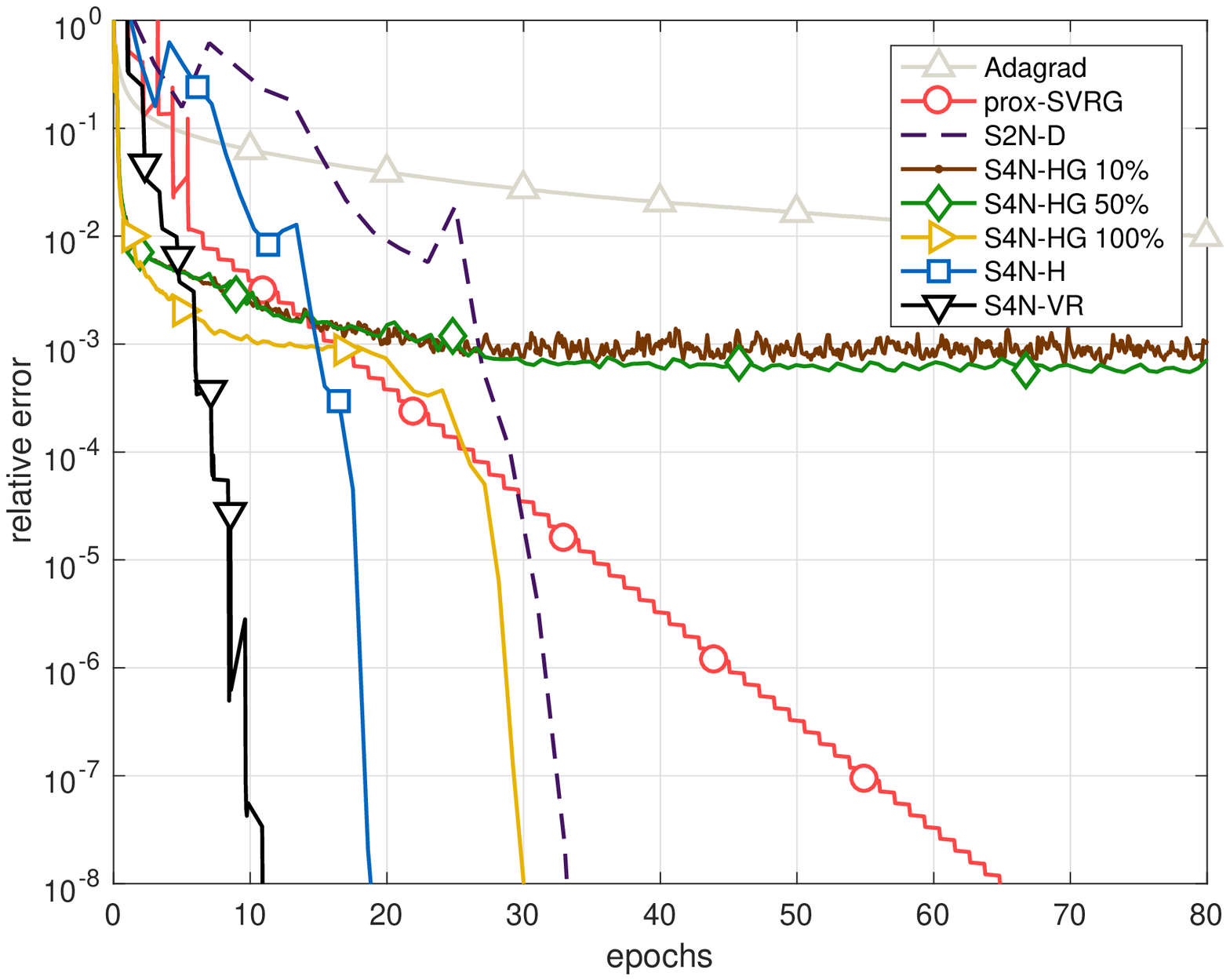}} &
\subfloat[$\mathtt{gisette}$]{
\includegraphics[width=6cm]{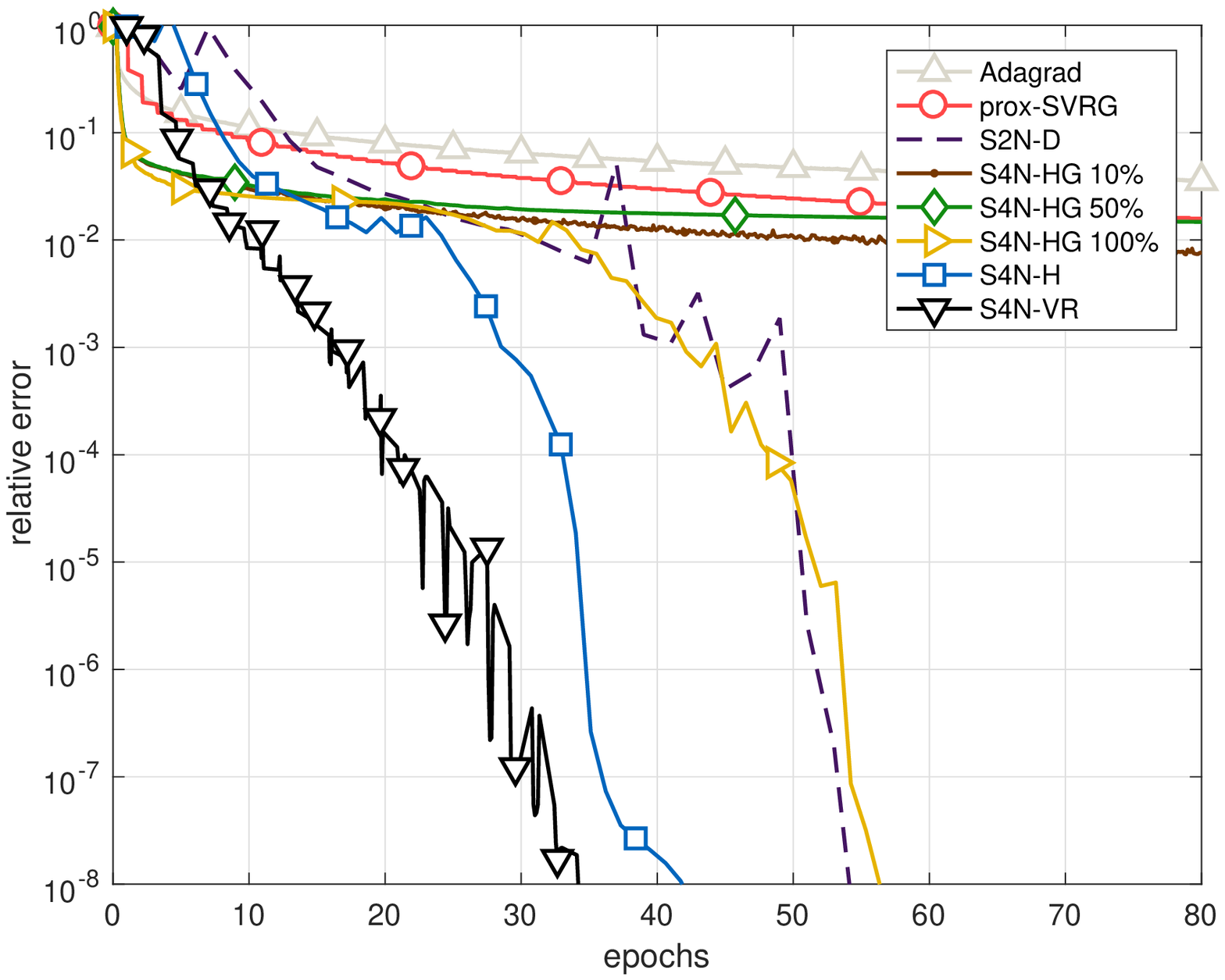}}\\
\subfloat[$\mathtt{MNIST}$]{
\includegraphics[width=6cm]{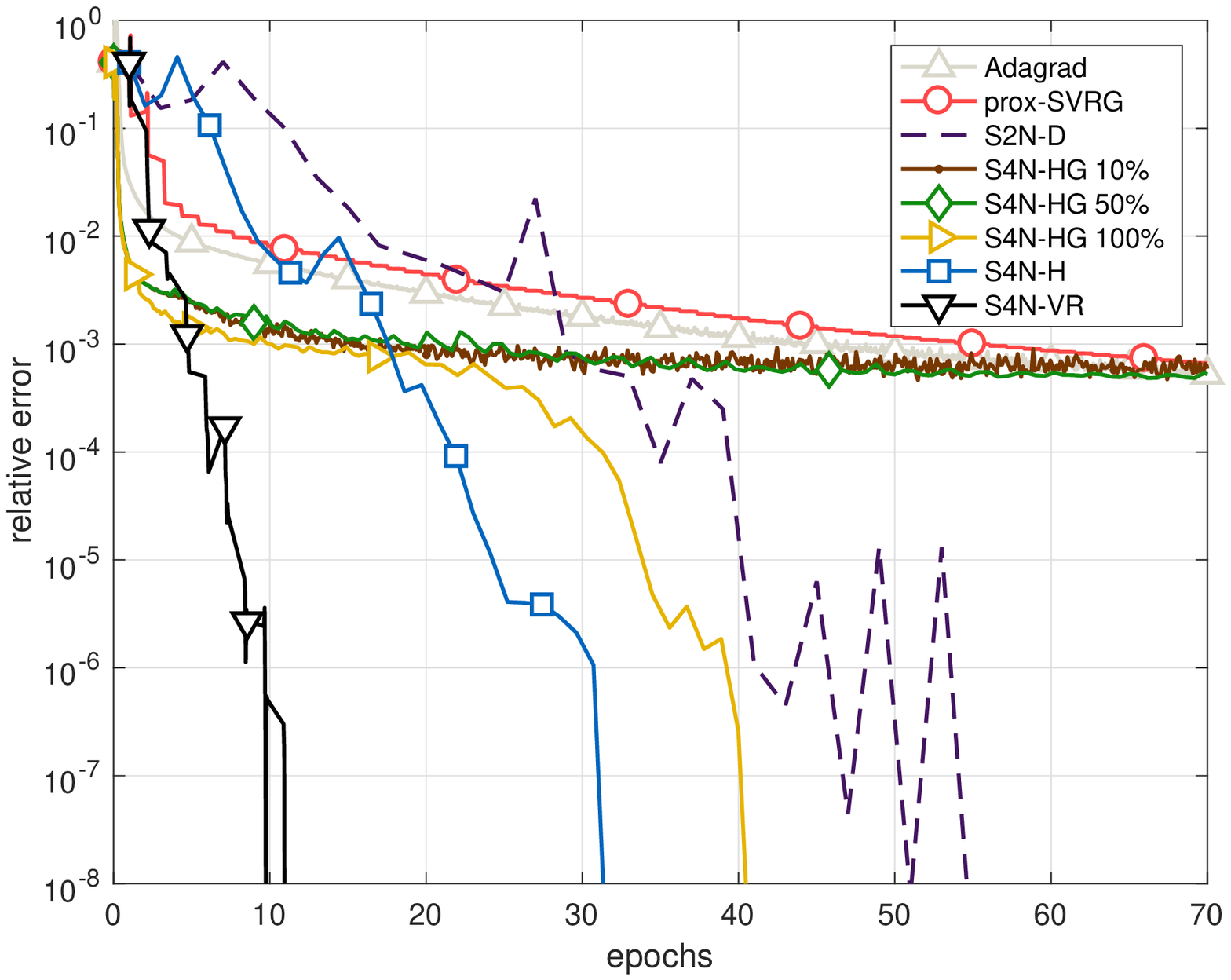}} &
\subfloat[$\mathtt{rcv1}$]{
\includegraphics[width=6cm]{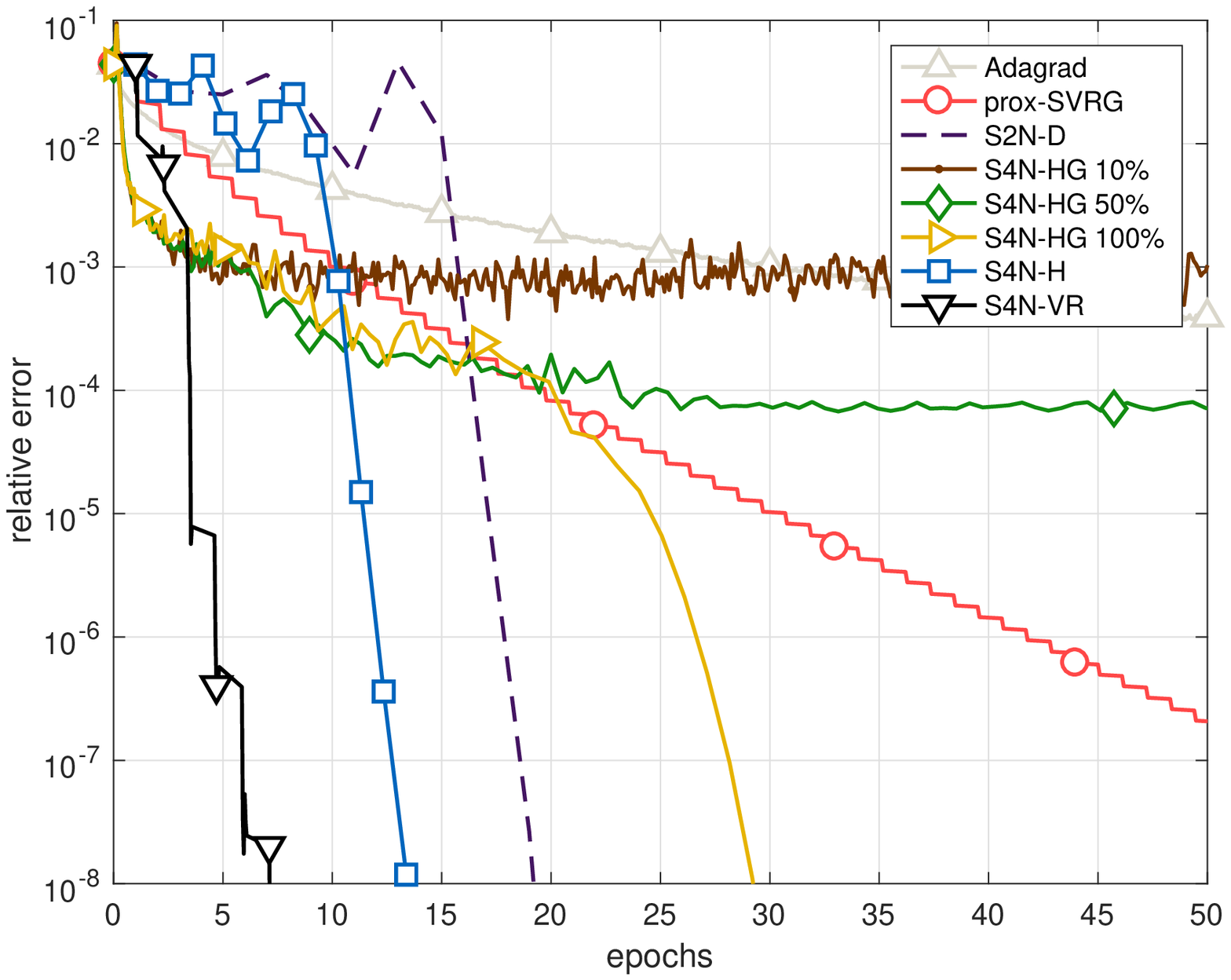}} 
\end{tabular}
\caption{Change of the relative error with respect to the required epochs for solving the $\ell_1$-logistic regression problem \cref{eq:LR}. (Averaged over 50 independent runs).}
\label{figure:logistic_epoch}
\end{figure}

\begin{figure}
\centering
\begin{tabular}{cc}
\subfloat[$\mathtt{CINA}$]{
\includegraphics[width=6cm]{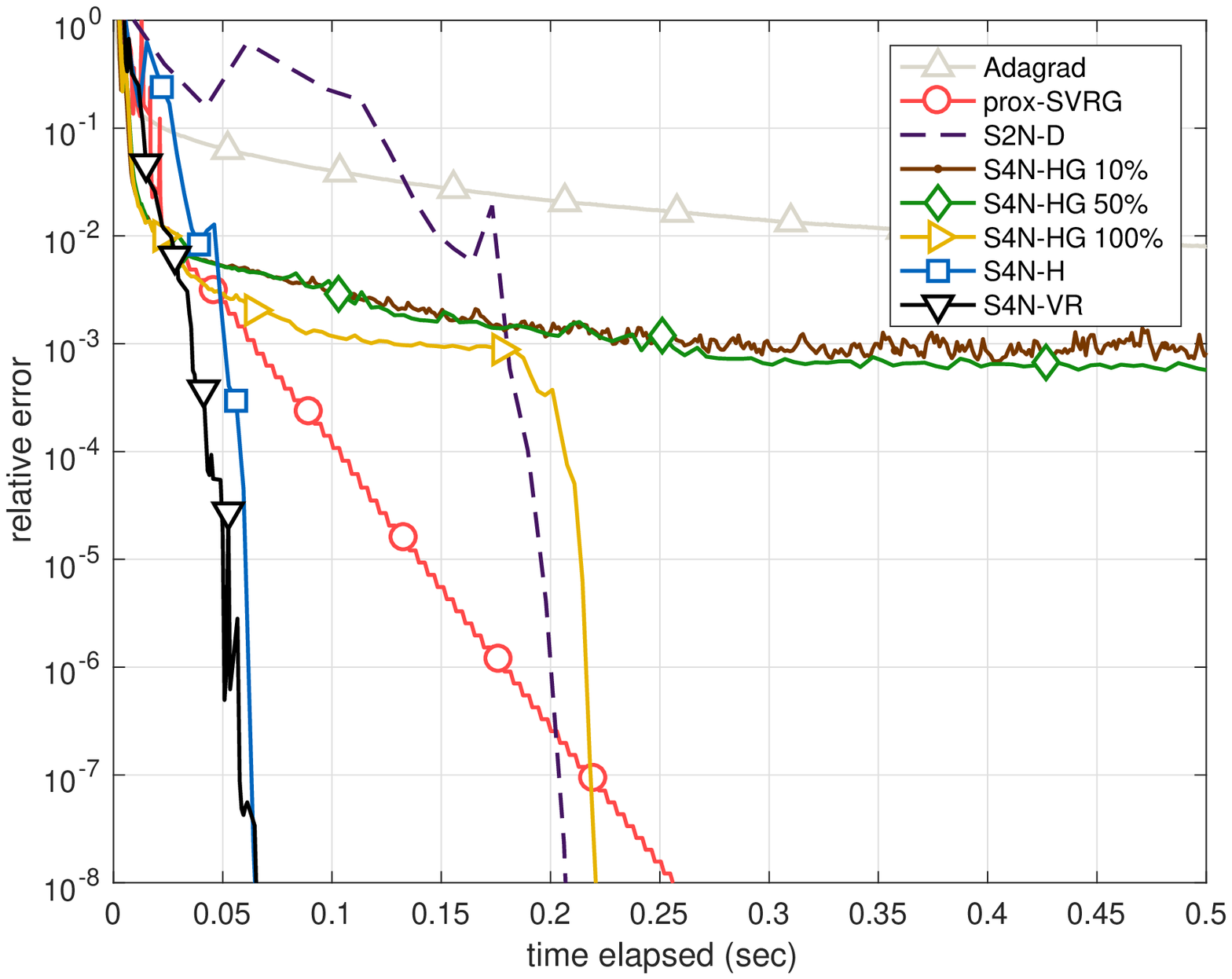}} &
\subfloat[$\mathtt{gisette}$]{
\includegraphics[width=6cm]{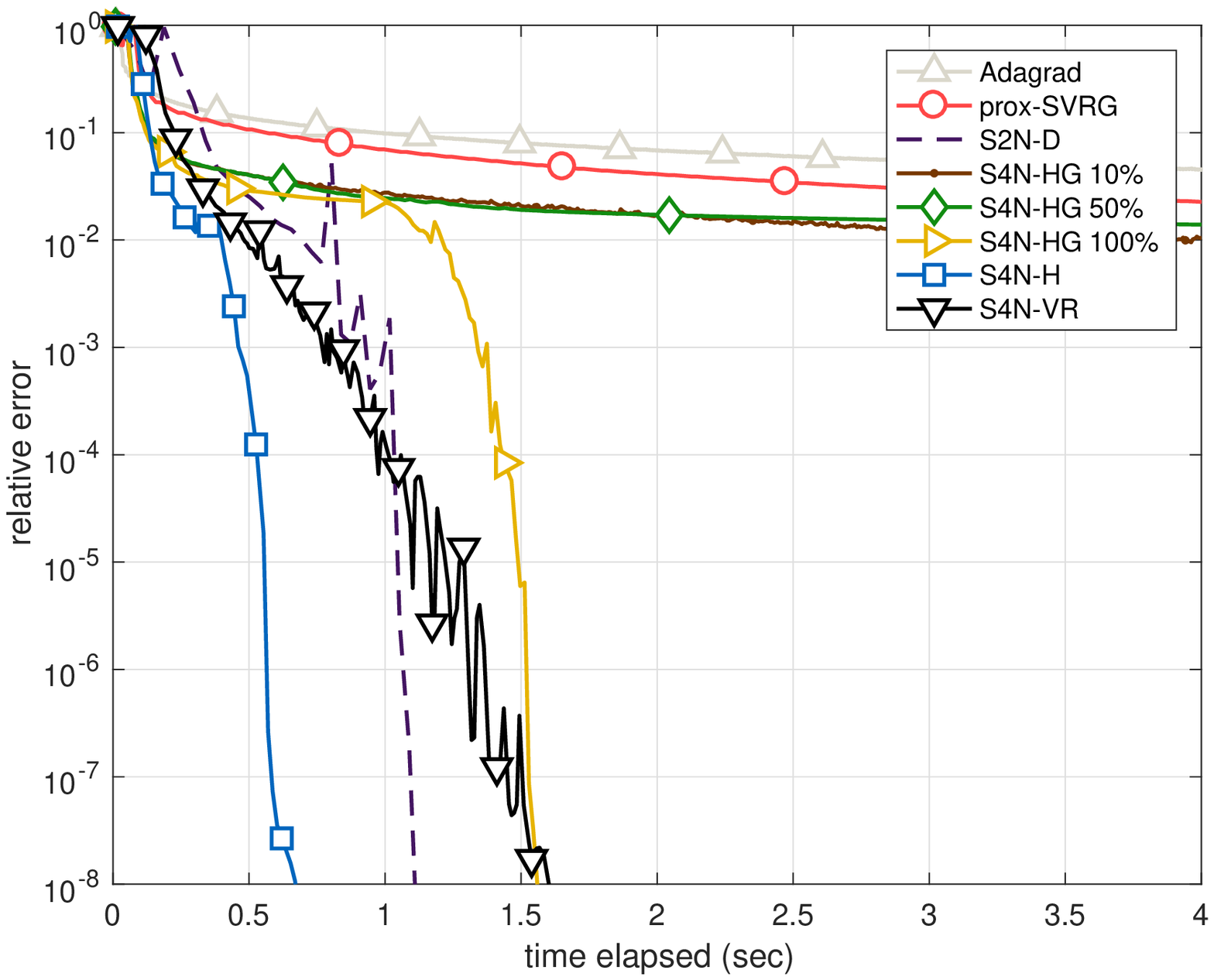}}\\
\subfloat[$\mathtt{MNIST}$]{
\includegraphics[width=6cm]{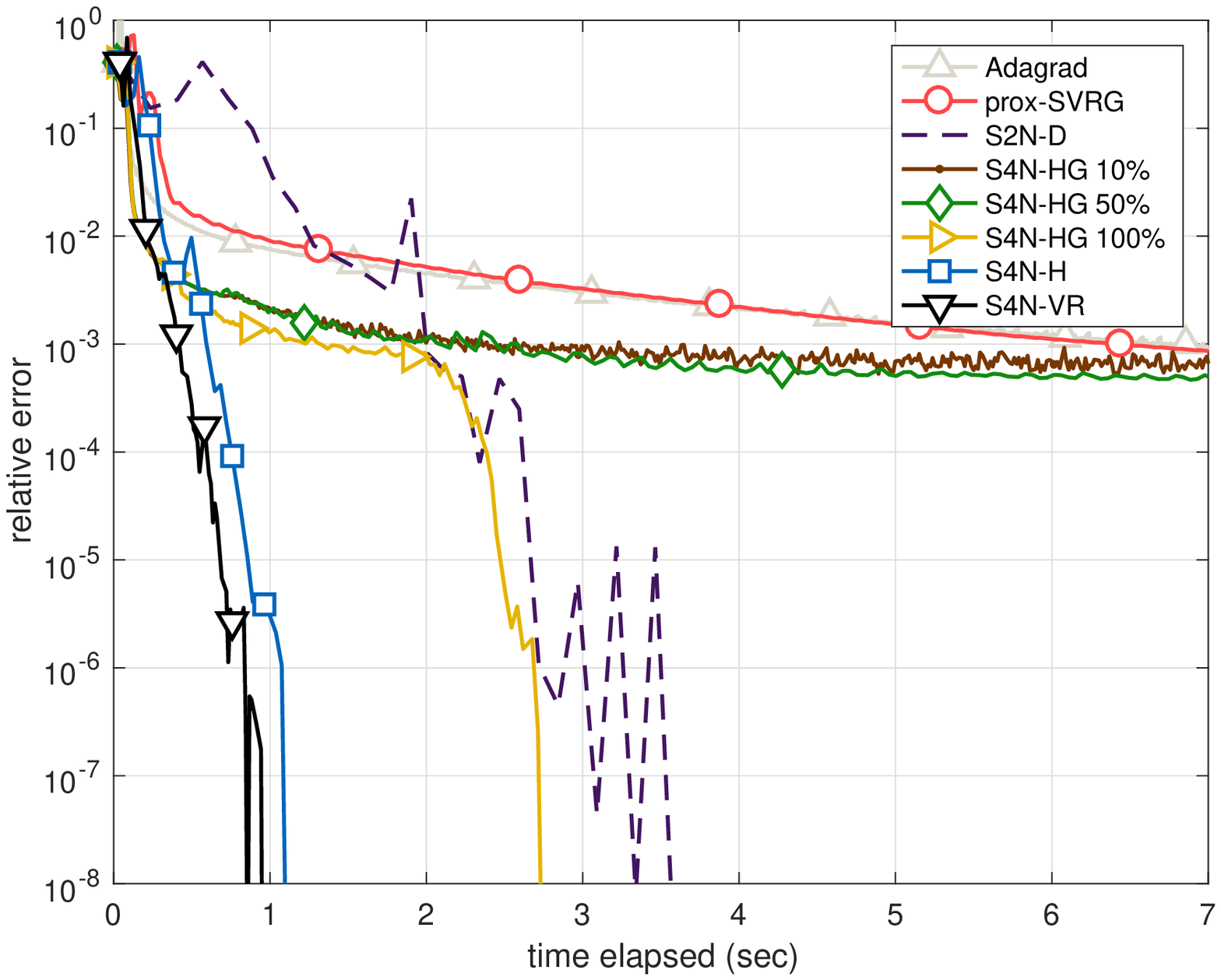}} &
\subfloat[$\mathtt{rcv1}$]{
\includegraphics[width=6cm]{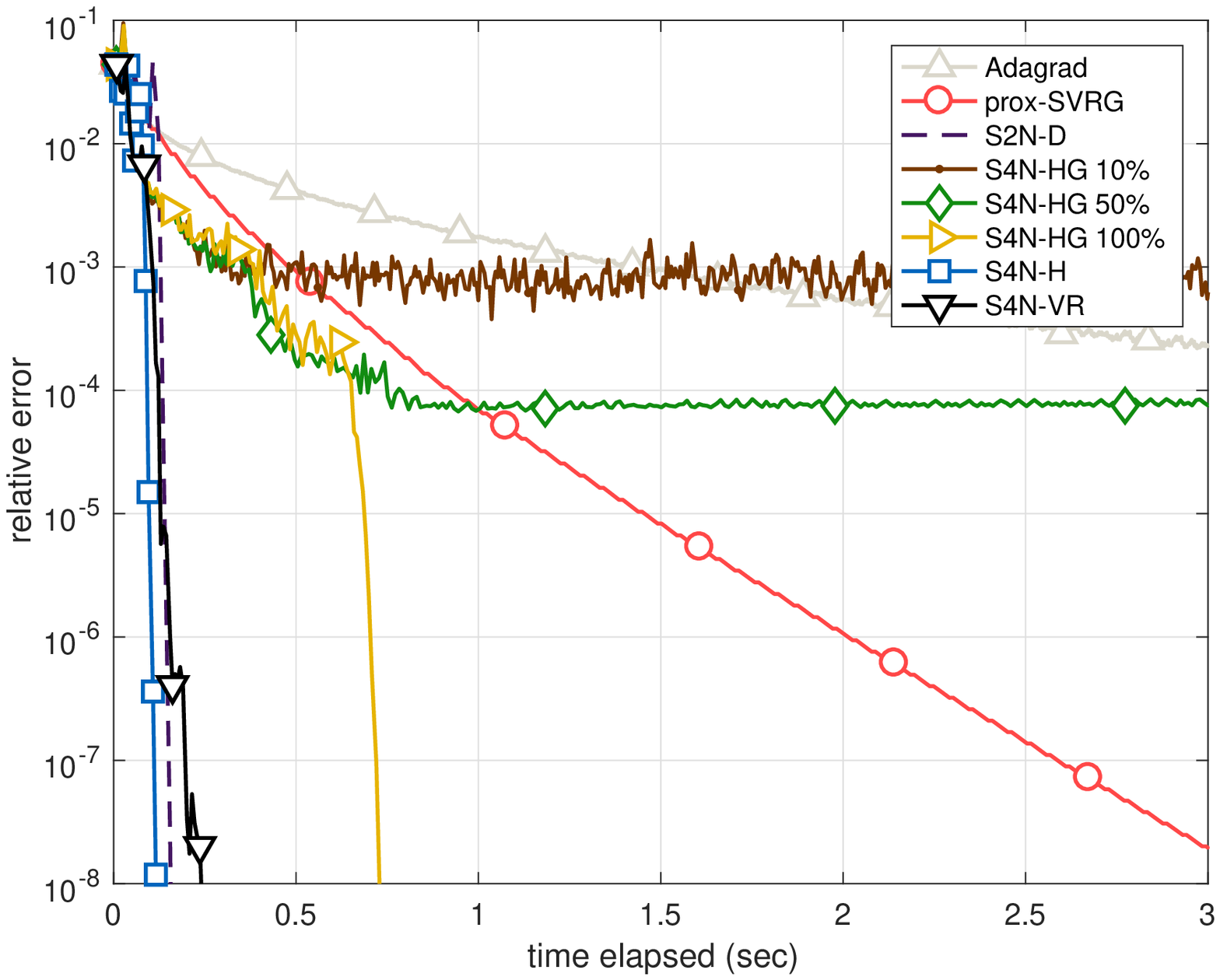}} 
\end{tabular}
\caption{Change of the relative error with respect to the cpu-time for solving the $\ell_1$-logistic regression problem \cref{eq:LR}. (Averaged over 50 independent runs).}
\label{figure:logistic_time}
\end{figure}


In \cref{figure:logistic_epoch} and \cref{figure:logistic_time}, we show the performances of all methods for solving the logistic regression problem \cref{eq:LR}. The change of the \textit{relative error} $(\psi(x) - \psi(x^*)) / \max\{1,|\psi(x^*)| \}$ is reported with respect to \textit{epochs} and \textit{cpu-time}, respectively. Here, $x^*$ is a reference solution of problem \cref{eq:LR} generated by S2N-D with stopping criterion $\|F^I(x)\| \leq 10^{-12}$. Moreover, one epoch denotes a full pass over a dataset. The results presented in \cref{figure:logistic_epoch} and \cref{figure:logistic_time} are averaged over 50 independent runs. 


At first, we observe that S2N-D, S4N-HG 100\%, S4N-H, and S4N-VR outperform the first order method Adagrad both with respect to the required number of epochs and cpu-time. Furthermore, the different variants S4N and S2N-D seem to be especially well-suited for recovering high accuracy solutions. 

The deterministic semismooth Newton method S2N-D decreases slowly in the early stage of the iteration process, but converges rapidly when the iterates are close to an optimal solution. The results show that in the early stage the performance of S2N-D is inferior to the performance of the other stochastic methods. If a higher precision is required, then $\text{S2N-D}$ becomes more efficient and behaves similar to S4N-HG 100\%. Overall, S2N-D is not competitive with the stochastic variants S4N-H and S4N-VR and converges slower. These observations indicate the strength of stochastic algorithms in general.

Our numerical experiments show that the different performances of the stochastic methods can be roughly split into two categories. The first category includes Adagrad, S4N-HG 10\%, and S4N-HG 50\%, while the second category consists of S4N-H, S4N-HG 100\%, and S4N-VR. The performance of prox-SVRG depends on the tested datasets. While in $\mathtt{gisette}$ and $\mathtt{MNIST}$, it converges slowly and performs similarly to Adagrad, prox-SVRG shows much faster convergence on the datasets $\mathtt{CINA}$ and $\mathtt{rcv1}$. Comparing the results in \cref{figure:logistic_epoch}, it appears that the performance of S4N-HG 10\% and S4N-HG 50\% is comparable to the one of the first order method Adagrad. Since the maximum sample set size of the stochastic gradient in S4N-HG 10\% and S4N-HG 50\% is limited to $\lfloor0.1N\rfloor$ and $\lfloor0.5N\rfloor$, the associated gradient error terms still might be too large, preventing transition to fast local convergence and causing stagnation of the methods. 
Thus and similar to the observations for stochastic quasi-Newton methods \cite{BHNS2016,WMGL2017}, the performance of S4N is greatly affected by the sampling strategy and the accuracy of the gradient approximation. This is also partly illustrated by the acceptance of the growth condition \cref{eq:growth-1}. While in S2N-D, S4N-HG 100\%, S4N-H, and S4N-VR usually every semismooth Newton step is accepted as a new iterate, a small number of Newton steps is rejected in S4N-HG 10\% and S4N-HG 50\%. This situation typically occurs in the $\mathtt{rcv1}$ dataset, when either S4N-HG 10\% or S4N-HG 50\% stagnates and the stochastic Newton step does not provide sufficient progress to be accepted. The results in \cref{figure:logistic_epoch} and \cref{figure:logistic_time} demonstrate that the performance of S4N can be further improved by increasing the sample size of the gradient gradually to its full size, as in S4N-HG 100\%, or by introducing an additional variance reduction technique as in S4N-VR. We also observe that S4N-VR outperforms most of the other methods (especially with respect to number of required epoch evaluations) which indicates that the combination of second order information and variance reduction is advantageous and very promising. 

\begin{figure}
\centering
\begin{tabular}{cc}
\subfloat[$\mathtt{CINA}$]{
\includegraphics[width=6cm]{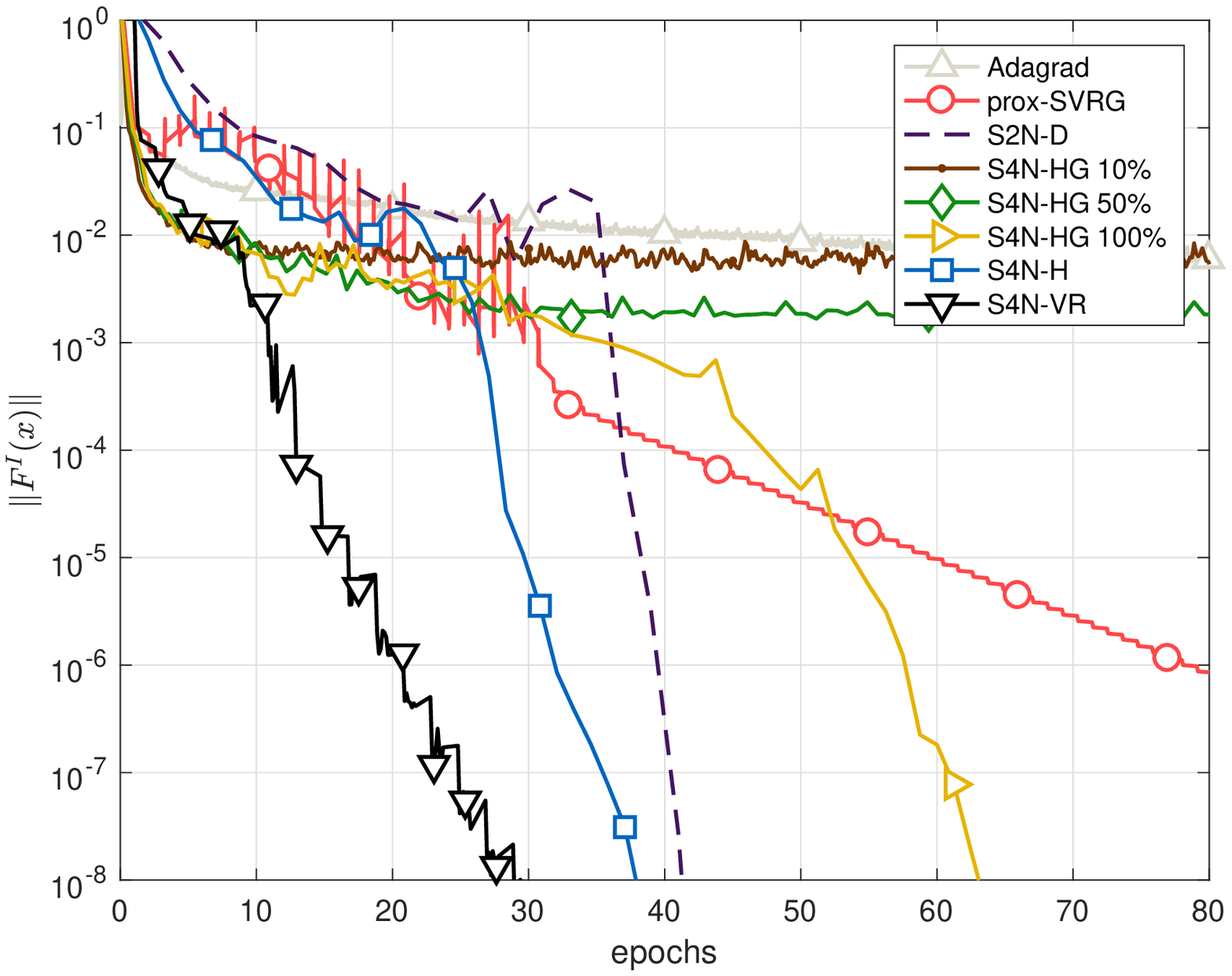}} &
\subfloat[$\mathtt{gisette}$]{
\includegraphics[width=6cm]{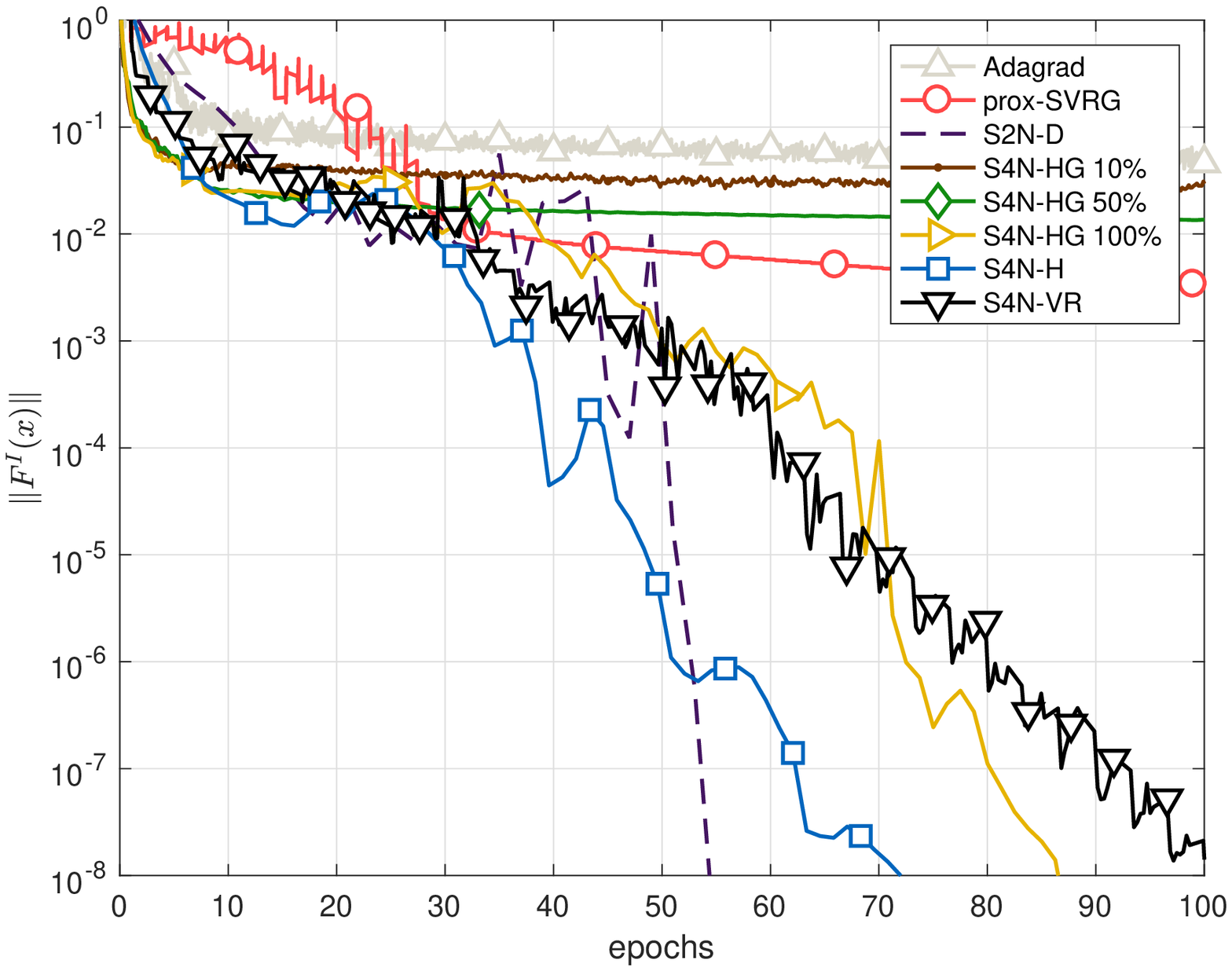}} \\
\subfloat[$\mathtt{MNIST}$]{
\includegraphics[width=6cm]{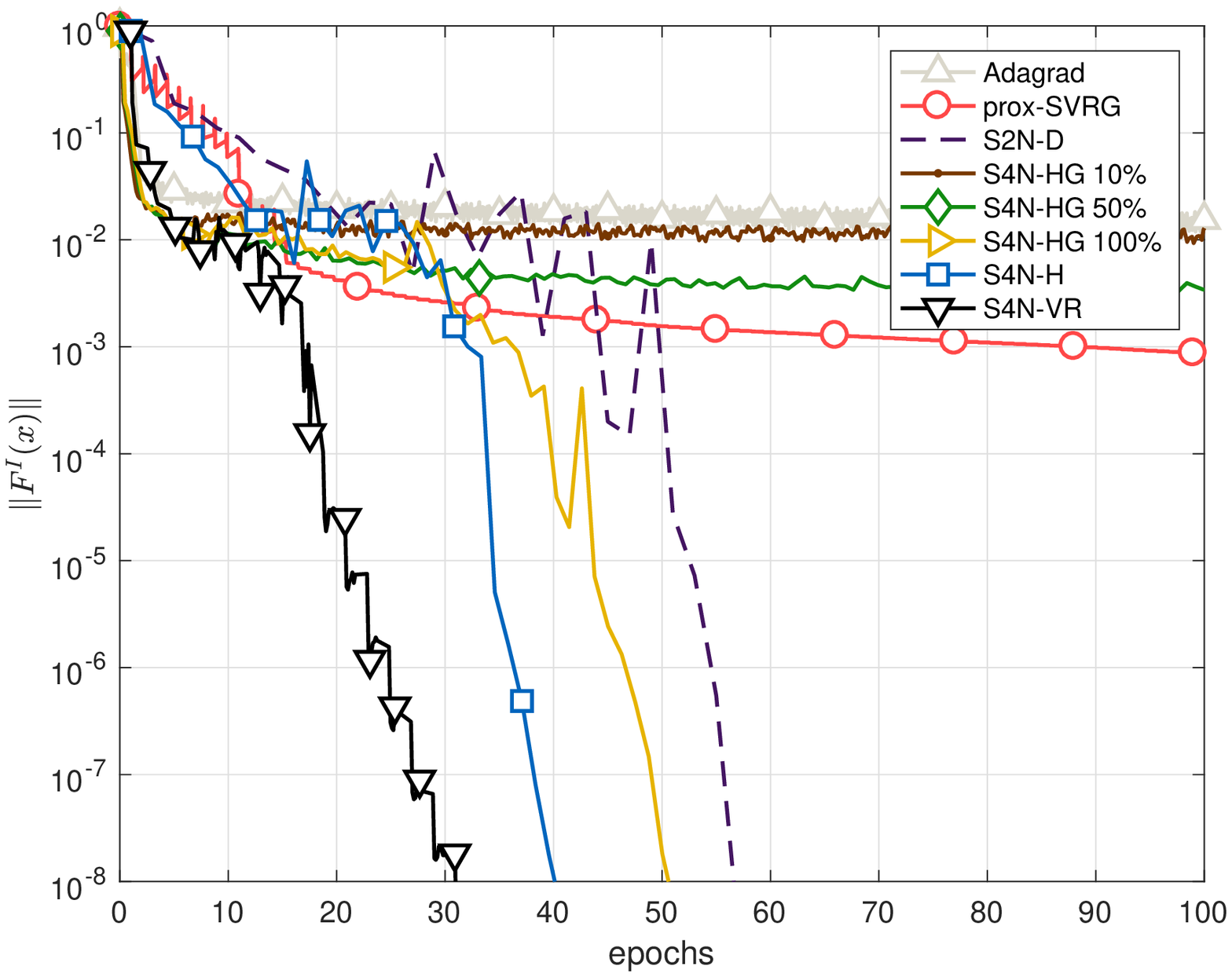}} &
\subfloat[$\mathtt{rcv1}$]{
\includegraphics[width=6cm]{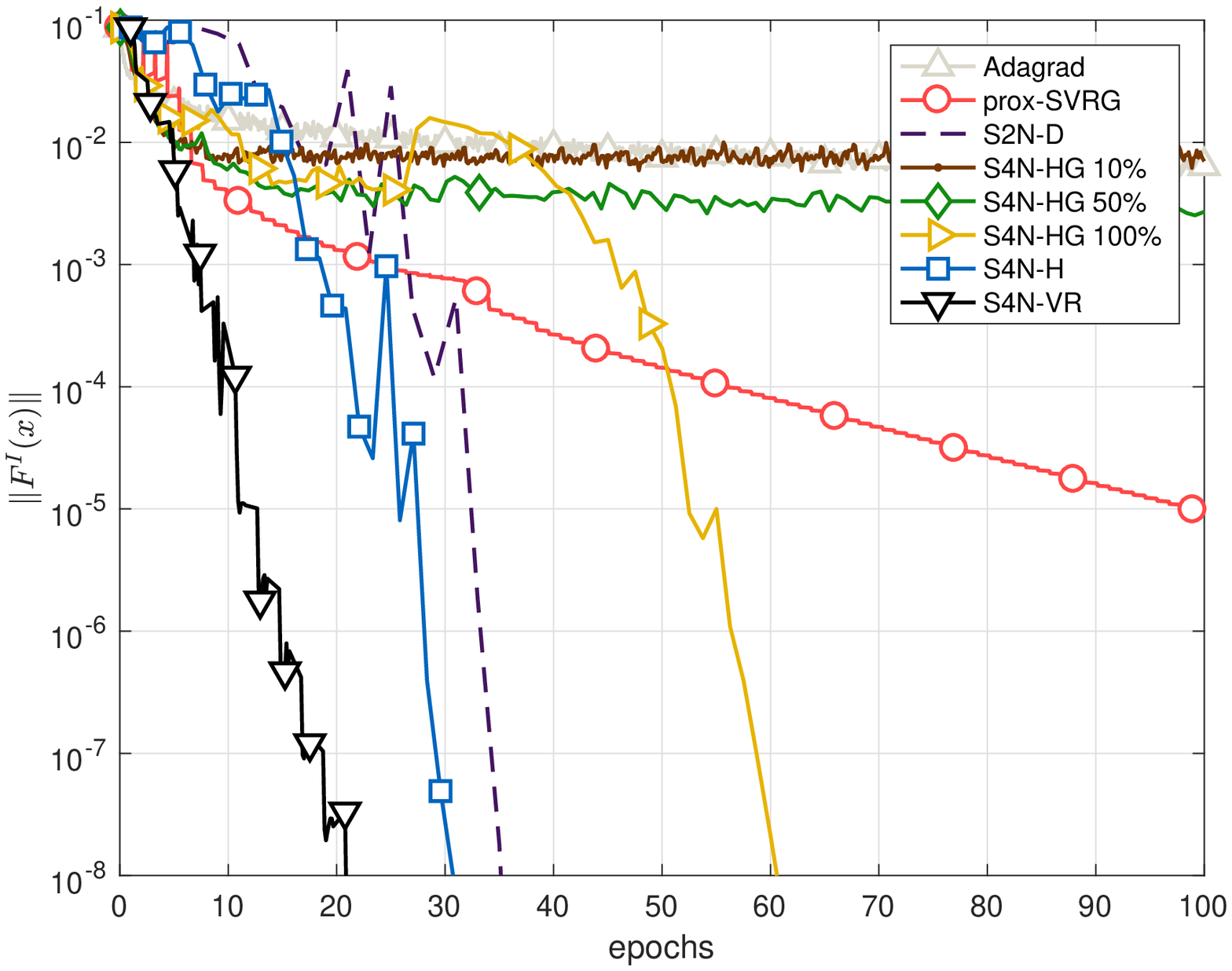}} 
\end{tabular}
\caption{Change of the residual $\|F^I(x)\|$ with respect to epochs for solving the nonconvex binary classification problem \cref{eq:nonconvex2}. (Averaged over 50 independent runs).}
\label{figure:sigmoid_epoch}
\end{figure}

\begin{figure}
\centering
\begin{tabular}{cc}
\subfloat[$\mathtt{CINA}$]{
\includegraphics[width=6cm]{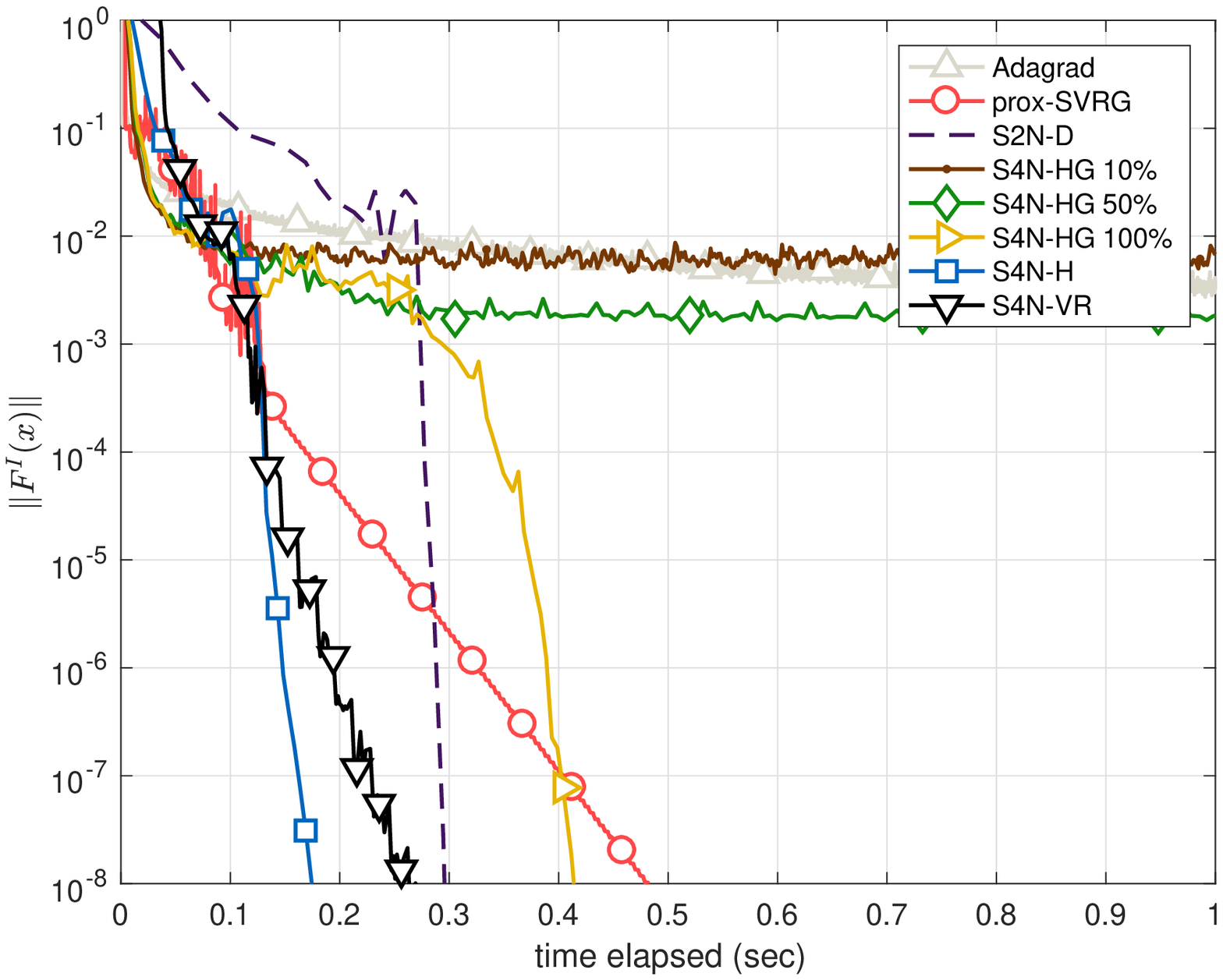}} &
\subfloat[$\mathtt{gisette}$]{
\includegraphics[width=6cm]{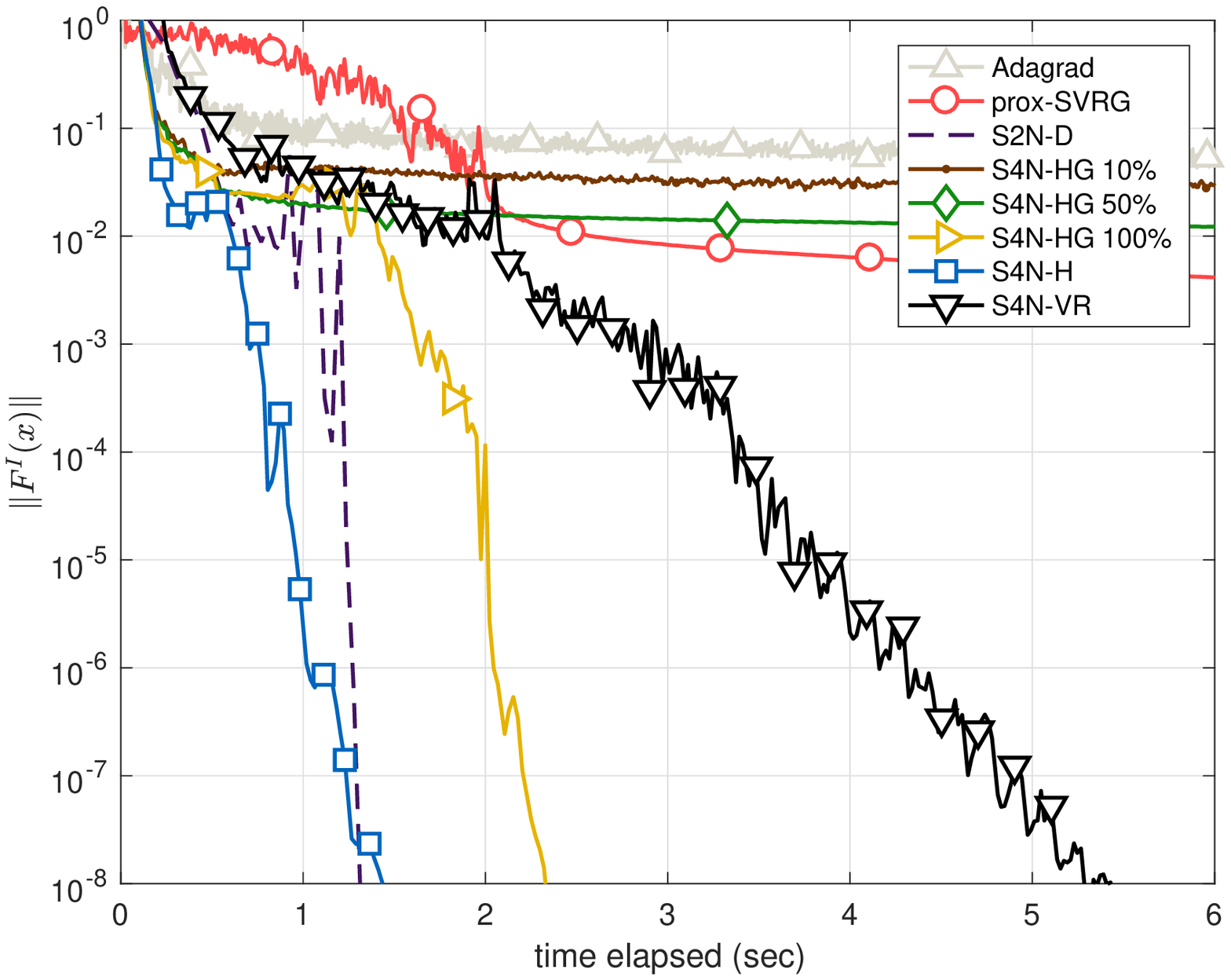}} \\
\subfloat[$\mathtt{MNIST}$]{
\includegraphics[width=6cm]{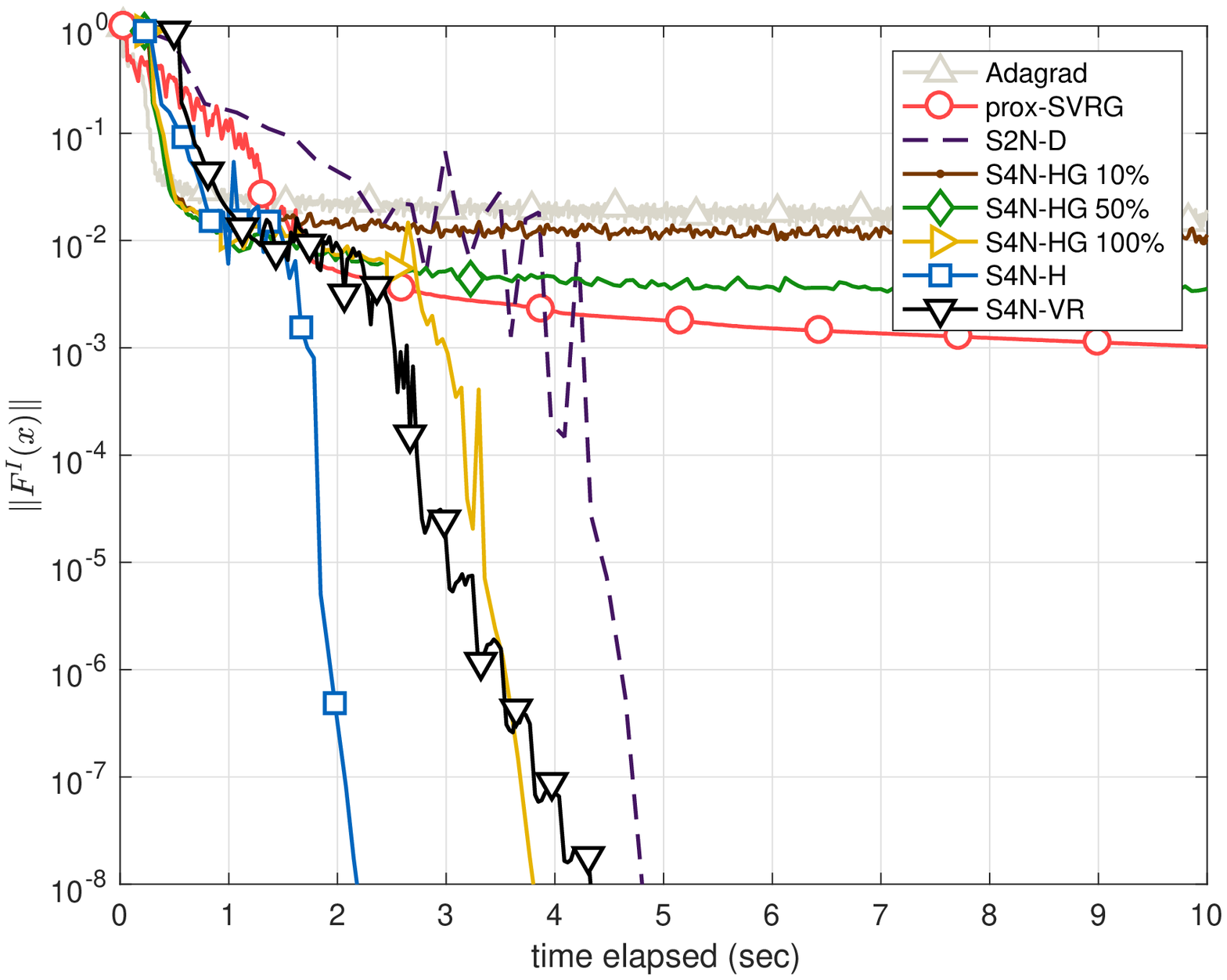}} &
\subfloat[$\mathtt{rcv1}$]{
\includegraphics[width=6cm]{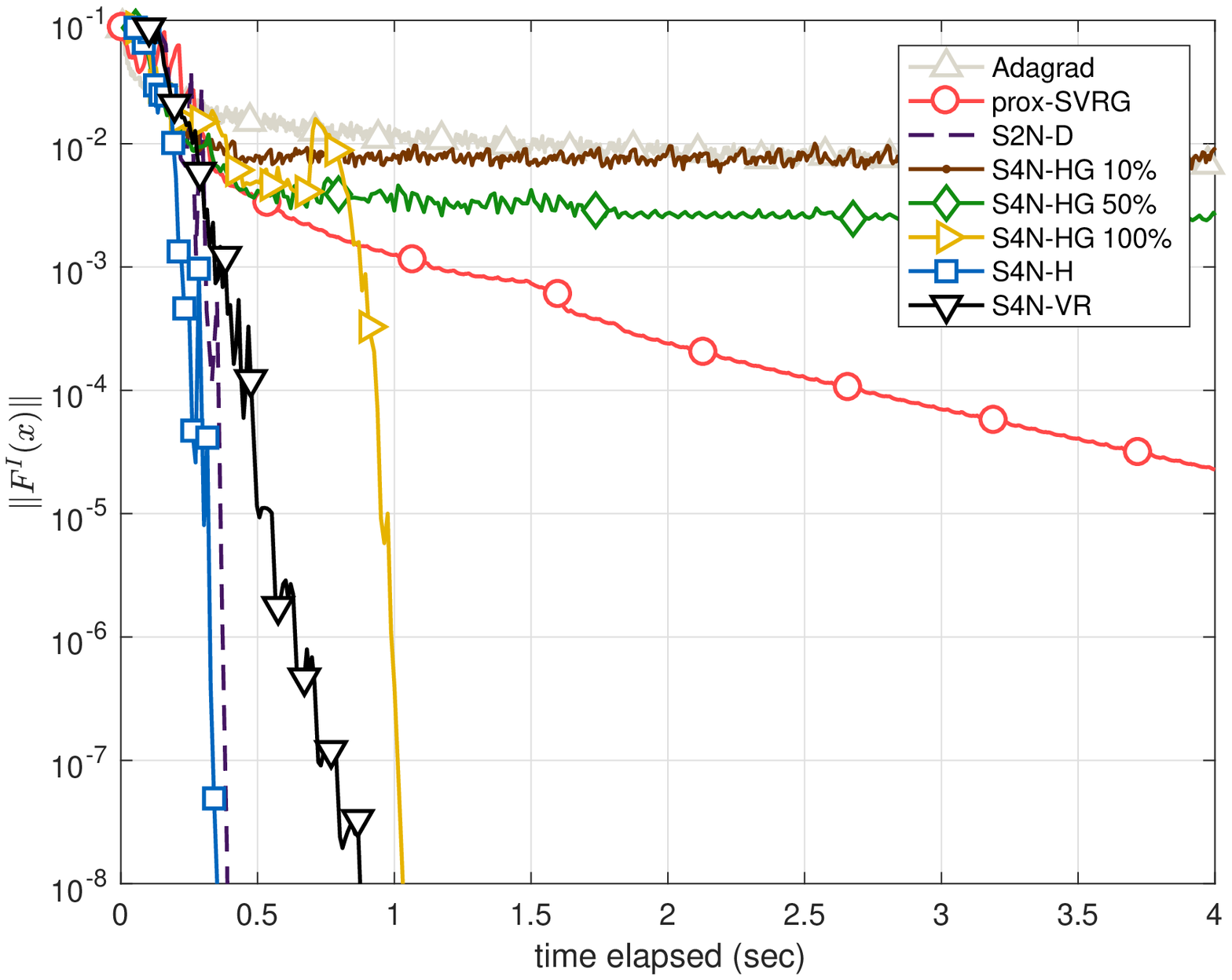}} 
\end{tabular}
\caption{Change of the residual $\|F^I(x)\|$ with respect to cpu-time for solving the nonconvex binary classification problem \cref{eq:nonconvex2}. (Averaged over 50 independent runs). \vspace{-1ex}}
\label{figure:sigmoid_time}
\end{figure}

\subsection{Nonconvex Binary Classification}
In this subsection, we consider the following nonconvex, binary classification problem \cite{MasBaxBarFre99,WMGL2017}
%
%
%
\be\label{eq:nonconvex2}
\min_{x\in\R^n} \frac{1}{N}\sum_{i=1}^N\,[1-\tanh(b_i\cdot\iprod{a_i}{x}))]+\mu \|x\|_1,
\ee
where $f_i(x) := 1-\tanh(b_i\iprod{a_i}{x}))$ is the sigmoid loss function and $\mu = 0.01$ is a regularization parameter. We test the same  datasets as in \cref{sec:logloss} to evaluate the performance of the different versions of S4N. 

The sampling strategy and parameters are adjusted as follows. For all S4N methods, the initial mini-batch size of the stochastic gradient is increased to $|\cS_0| = \lfloor 0.05N \rfloor$. For S4N-HG 10\%, S4N-HG 50\%, and S4N-VR, we set $|\mathcal T_0| = \lfloor 0.025N \rfloor$. The other variants of S4N start with the initial sample size $|\mathcal T_0| = \lfloor 0.05N \rfloor$. We set $t_{\max} = \lfloor 0.25N \rfloor$, except for S4N-HG 10\% where $t_{\max} = \lfloor 0.05N \rfloor$ is used. We utilize the minimal residual method (MINRES) to solve the reduced Newton system and the maximum number of MINRES-iterations is set to 32. We choose $c_\nu = 2500$ and in S4N-VR, the parameter $m$ is adjusted to $m = 8$. For the $\mathtt{gisette}$ dataset, we changed the initial value for $\lambda_0$ to 5, which significantly improved the performance of the S4N methods. All remaining parameters and strategies follow the setup discussed in the last subsection. 

The numerical results are presented in \cref{figure:sigmoid_epoch} and \cref{figure:sigmoid_time}. We report the change of the residual $\|F^I(x)\|$ with respect to the required epochs and cpu-time. In general, the overall performance of the different methods is similar to the results shown in the last subsection. However, in contrast to the convex logistic regression problem, the more accurate approximation of the Hessian seems to be beneficial and can accelerate convergence. This observation is also supported by the slightly improved performance of the deterministic semismooth Newton method S2N-D. Our results show that prox-SVRG now consistently outperforms S4N-HG 10\% and S4N-HG 50\% in recovering high precision solutions. Similar to the convex example, S4N-HG 100\% manages to significantly reduce the residual $\|F^I(x)\|$ in the first iterations. As soon as the stochastic error in the gradient approximation becomes negligible, the behavior of S4N-HG 100\% changes and fast local converges can be observed. The methods S4N-H and S4N-VR still compare favorably with the other stochastic approaches. With respect to the epochs, S4N-VR again outperforms the other methods (expect for the dataset $\mathtt{gisette}$). Regarding the required cpu-time, S4N-H achieves good results and converges quickly to highly accurate solutions.   


\section{Conclusion}
In this paper, we investigate a stochastic semismooth Newton method for solving nonsmooth and nonconvex minimization problems. In the proposed framework, the gradient and Hessian of the smooth part of the objective function are approximated by general stochastic first and second order oracles. This allows the application of various sub-sampling and variance reduction techniques or other stochastic approximation schemes. The method is based on stochastic semismooth Newton steps, stochastic proximal gradient steps, and growth conditions and a detailed discussion of the global convergence properties is provided. Under suitable assumptions, transition to fast local convergence is established. More specifically, we show that the method converges locally with high probability with an r-linear or r-superlinear rate if the sample sizes of the stochastic gradient and Hessian are increased sufficiently fast. The approach is tested on an $\ell_1$-logistic regression and a nonconvex binary classification problem on a variety of datasets. The numerical comparisons indicate that our algorithmic framework and especially the combination of (generalized) second order information and variance reduction are promising and competitive.


\appendix
\section{Proofs}


\subsection{Proof of \cref{fact:one}} \label{sec:app-fact}

We start with some preparatory definitions. Let $(\Omega, \mathcal F)$ and $(\Xi,\mathcal X)$ be measurable spaces, then $\mathcal F \otimes \mathcal X$ denotes the usual product $\sigma$-algebra of the product space $\Omega \times \Xi$. We use $\mathcal B(\Rn)$ to denote the Borel $\sigma$-algebra of $\Rn$. Let $\Gamma : \Omega \rightrightarrows \Rn$ be a multifunction. Then, for some set $C \subset \Rn$, we define $\Gamma^{-1}(C) := \{\omega \in \Omega : \Gamma(\omega) \cap C \neq \emptyset \}$. Following \cite{AliBor06}, the multifunction $\Gamma$ is called \textit{weakly measurable} if for all \textit{open} sets $C \subset \Rn$, the set $\Gamma^{-1}(C)$ is measurable. The function $\Gamma$ is called \textit{measurable} if $\Gamma^{-1}(C)$ is measurable for all \textit{closed} sets $C \subset \Rn$. If $\Gamma$ is closed-valued then these two concepts coincide and are actually equivalent, see \cite{Roc76}. The \textit{graph} of $\Gamma$ is given by $\gra\;\Gamma := \{(\omega,x) \in \Omega \times \Rn : x \in \Gamma(\omega) \}$ and the \textit{closure} $\bar \Gamma$ of $\Gamma$ is defined via $\bar \Gamma(\omega) := \cl~\Gamma(\omega)$ for all $\omega \in \Omega$. The next result is straightforward.

\begin{lemma} \label{lemma:app-meas-1} Let $(\Omega,\mathcal F)$ be a measurable space and let $S : \Omega \rightrightarrows \Rn$ be a multifunction. Then, the indicator function $\iota : \Omega \times \Rn \to \R$, $\iota(\omega,x) := {\mathds 1}_{S(\omega)}(x)$ is jointly $\mathcal F \otimes \mathcal B(\Rn)$-measurable if and only if $\gra\;S \in \mathcal F \otimes \mathcal B(\Rn)$. 
\end{lemma}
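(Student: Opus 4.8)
The plan is to reduce the claim to the elementary measure-theoretic fact that the indicator function of a set is measurable exactly when the set itself is measurable. The crucial observation, which does essentially all of the work, is that $\iota$ is nothing but the indicator function of the graph of $S$, once the latter is viewed as a subset of the product space $\Omega \times \Rn$.

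First I would record the pointwise identity. For every pair $(\omega,x) \in \Omega \times \Rn$ we have, directly from the definitions,
\[ \iota(\omega,x) = \mathds 1_{S(\omega)}(x) = 1 \iff x \in S(\omega) \iff (\omega,x) \in \gra\,S, \]
and $\iota(\omega,x) = 0$ otherwise. Hence $\iota = \mathds 1_{\gra\,S}$ as functions on the measurable space $(\Omega \times \Rn, \mathcal F \otimes \mathcal B(\Rn))$. I would emphasize that no measurability or closedness assumption on $S$ is needed for this identity, which is precisely why the equivalence can be phrased purely in terms of $\gra\,S$.

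Next I would invoke (and, for completeness, briefly verify) the standard equivalence on an arbitrary measurable space $(X,\mathcal A)$: the indicator $\mathds 1_A$ of a set $A \subset X$ is $\mathcal A$-measurable if and only if $A \in \mathcal A$. For necessity, if $\mathds 1_A$ is measurable then $A = \mathds 1_A^{-1}((\tfrac12,\infty)) \in \mathcal A$. For sufficiency, if $A \in \mathcal A$ then for any Borel set $B \subset \R$ the preimage $\mathds 1_A^{-1}(B)$ equals one of $\emptyset$, $A$, $A^c$, or $X$, according to whether $B$ contains $0$ and/or $1$, and each of these lies in $\mathcal A$.

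Applying this equivalence with $X = \Omega \times \Rn$, $\mathcal A = \mathcal F \otimes \mathcal B(\Rn)$, and $A = \gra\,S$, combined with the identity $\iota = \mathds 1_{\gra\,S}$, yields the assertion. I do not expect any substantive obstacle here; the statement is obtained by unwinding the definition of the product indicator. The only point deserving care is to resist imposing any regularity on $S$ (such as closed- or measurable-valuedness), since the claim is an unconditional equivalence, and it is exactly this generality that makes it the natural reformulation used in the proof of \cref{fact:one}.
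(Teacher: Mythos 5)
Your proof is correct and is exactly the ``straightforward'' argument the paper has in mind when it states \cref{lemma:app-meas-1} without proof: the identity $\iota = \mathds 1_{\gra\,S}$ on $(\Omega \times \Rn, \mathcal F \otimes \mathcal B(\Rn))$ reduces everything to the elementary equivalence between measurability of a set and of its indicator. Your added remark that no regularity on $S$ is needed is also apt, since the lemma is applied in the proof of \cref{fact:one} before any measurability of the relevant multifunction has been established.
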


\begin{lemma} \label{lemma:app-meas-2} Let $(\Omega,\mathcal F)$ be a measurable space and let $f : \Omega \times \Rn \to \R$ be a given Carath\'{e}odory function. Moreover, suppose that the mapping $\vp : \Rn \to \Rex$ is convex, lower semicontinuous, and proper and let us consider the multifunction 
\[ S_\gamma : \Omega \rightrightarrows \Rn, \quad S_\gamma(\omega) := \{x \in \Rn : \vp(x) + f(\omega,x) \leq \gamma \} \]
for some $\gamma \in \R$. Then, $S_\gamma$ has measurable graph, i.e., it holds $\gra\,S_\gamma \in \mathcal F \otimes \mathcal B(\Rn)$.
\end{lemma}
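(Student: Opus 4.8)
The plan is to realize the graph of $S_\gamma$ as a sublevel set of a single extended-real-valued function and then prove that this function is jointly measurable. Concretely, I would introduce $g : \Omega \times \Rn \to \Rex$, $g(\omega,x) := \vp(x) + f(\omega,x)$, and observe that
\[ \gra\, S_\gamma = \{(\omega,x) \in \Omega \times \Rn : g(\omega,x) \leq \gamma\}. \]
Since $f$ is real-valued and $\vp \in \Rex$ is proper (hence never equal to $-\infty$), the sum defining $g$ is unambiguous everywhere and $g$ maps into $\Rex$, so no $\infty-\infty$ situation occurs. It therefore suffices to show that $g$ is $\mathcal F \otimes \mathcal B(\Rn)$-measurable as a map into $\Rex$ (equipped with its Borel $\sigma$-algebra): the sublevel set above is then automatically an element of $\mathcal F \otimes \mathcal B(\Rn)$.

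First I would handle the two summands of $g$ separately. The term $(\omega,x) \mapsto \vp(x)$ factors as $\vp \circ \pi$, where $\pi : \Omega \times \Rn \to \Rn$, $\pi(\omega,x) = x$, is the canonical projection. As $\pi$ is $(\mathcal F \otimes \mathcal B(\Rn), \mathcal B(\Rn))$-measurable and $\vp$ is lower semicontinuous and hence Borel measurable, the composition $\vp \circ \pi$ is $\mathcal F \otimes \mathcal B(\Rn)$-measurable. For the term $f$, I would invoke the classical fact that every Carath\'{e}odory function is jointly measurable, see, e.g., \cite{AliBor06}: because $f(\cdot,x)$ is $\mathcal F$-measurable for each fixed $x \in \Rn$ and $f(\omega,\cdot)$ is continuous for each fixed $\omega \in \Omega$, the function $f$ is $\mathcal F \otimes \mathcal B(\Rn)$-measurable. (The usual argument approximates the continuous variable along a countable dense set and combines continuity in $x$ with measurability in $\omega$.)

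Having established joint measurability of both summands, the sum $g = \vp \circ \pi + f$ is $\mathcal F \otimes \mathcal B(\Rn)$-measurable as well, since the sum of two extended-real-valued measurable functions whose pointwise sum is everywhere well-defined is measurable. Consequently $\gra\, S_\gamma = \{g \leq \gamma\} \in \mathcal F \otimes \mathcal B(\Rn)$, which is the assertion.

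The single nontrivial ingredient here is the joint measurability of the Carath\'{e}odory function $f$; the factorization of $\vp$ through the projection and the stability of measurability under sums and sublevel sets are routine manipulations with product $\sigma$-algebras and extended-real arithmetic. I expect the main point requiring care to be the bookkeeping of extended-real values (ensuring $g$ is well-defined and that $-\infty$ is never attained), and I would simply cite the Carath\'{e}odory measurability result rather than reproduce its grid-approximation proof.
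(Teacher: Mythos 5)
Your proof is correct, but it takes a genuinely different and more elementary route than the paper. The paper exploits convexity: via Fenchel--Moreau it writes $\vp = \vp^{**}$ as a countable supremum of affine minorants $x \mapsto \iprod{y_k}{x} - \tau_k$ over a dense subset of $\epi\,\vp^*$, forms for each $k$ the strict-sublevel multifunction of the Carath\'{e}odory function $\tilde f_k(\omega,x) = \iprod{y_k}{x} - \tau_k + f(\omega,x)$, and then invokes the measurable-multifunction machinery of \cite{AliBor06} (weak measurability of sublevel multifunctions, measurable graphs of their closures) to assemble $\gra\,S_\gamma$. You instead observe that $\gra\,S_\gamma$ is simply the sublevel set $\{g \leq \gamma\}$ of the single function $g(\omega,x) = \vp(x) + f(\omega,x)$, which is jointly $\mathcal F \otimes \mathcal B(\Rn)$-measurable because $\vp$ is lower semicontinuous (hence Borel) and composed with the measurable projection, while $f$ is Carath\'{e}odory and hence jointly measurable --- a fact the paper itself already uses for $\cG$ and $\cH$ in the proof of \cref{fact:one}, citing \cite{AubFra09,AliBor06}; your handling of extended-real arithmetic (properness of $\vp$ rules out $\infty - \infty$) is the right point of care and is handled correctly. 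Your argument buys two things: it dispenses entirely with the convexity hypothesis on $\vp$ (Borel measurability with values in $\Rex$ suffices), and it avoids the delicate bookkeeping with strict inequalities, closures, and countable set operations on graphs that the paper's multifunction route requires. What the paper's version buys is uniformity of toolkit --- its appendix works throughout with the multifunction-measurability results of \cite{AliBor06} (e.g., for $\lev_{\veps_k^1}\,{\sf Q}_k$ and for measurable selections of $\mathcal M_k$), so phrasing this lemma in the same language keeps the citations homogeneous --- but nothing in the conclusion requires it, and your direct sublevel-set argument is the cleaner proof of the stated claim.
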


\begin{proof} Since $\vp$ is convex, lower semicontinuous, and proper, it can be written as the supremum of its affine minorants. In particular, by \cite[Corollary 13.36 and Proposition 7.11]{BauCom11} there exists a countable, dense subset $\mathcal D := \{(y_1,\tau_1),(y_2,\tau_2),... \}$ of $\epi~\vp^*$ such that  
\[ \vp(x) = \vp^{**}(x) = \sup_{(y,\tau) \in \epi~\vp^*}~\iprod{y}{x} - \tau = \sup_{k \in \N}~\iprod{y_k}{x} - \tau_k, \quad \forall~x \in \Rn. \] 
(Here, we also used the closedness of the epigraph $\epi~\vp^*$). Next, let us define the family of multifunctions $S_{\gamma,k} : \Omega \rightrightarrows \Rn$,
\[  S_{\gamma,k}(\omega) := \{x \in \Rn: \iprod{y_k}{x} - \tau_k + f(\omega,x) < \gamma \}, \quad k \in \N. \] 
Since the mapping $\tilde f_k(\omega,x) := \iprod{y_k}{x} - \tau_k + f(\omega,x)$ is obviously a Carath\'{e}odory function, it follows from \cite[Lemma 18.7]{AliBor06} that each multifunction $S_{\gamma,k}$ is (weakly) measurable. Moreover, \cite[Theorem 18.6]{AliBor06} implies that the closures $\bar S_{\gamma,k}$ have measurable graph. Thus, due to $\gra~S_\gamma = \bigcup_{k=1}^\infty \gra~\bar S_{\gamma,k}$, we can infer $\gra~S_\gamma \in \mathcal F \otimes \mathcal B(\Rn)$.
\end{proof}

\begin{lemma} \label{lemma:app-penrose} Let $(\Omega,\mathcal F)$ be a measurable space and let $T : \Omega \to \R^{n \times n}$ and $y : \Omega \to \Rn$ be measurable functions. Then, the mapping $\omega \mapsto \zeta(\omega) := T(\omega)^+ y(\omega)$ is measurable. \end{lemma}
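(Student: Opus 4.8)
The plan is to reduce the claim to the fact that the Moore--Penrose inverse, viewed as a map on matrix entries, is a pointwise limit of continuous maps, and then to exploit that measurability is preserved under continuous operations and pointwise limits. The only genuine subtlety is that $A \mapsto A^+$ is \emph{not} continuous at matrices of non-maximal rank, so one cannot simply argue that $\zeta$ is a continuous image of the measurable pair $(T,y)$. The device that resolves this is the Tikhonov (ridge) regularization formula for the pseudoinverse.

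The key analytic fact I would invoke is that for every $A \in \R^{n \times n}$ one has
\[ A^+ = \lim_{m \to \infty} (A^\top A + m^{-1} I)^{-1} A^\top. \]
For each fixed $m \in \N$ the matrix $A^\top A + m^{-1} I$ is symmetric positive definite, hence invertible, so the map $A \mapsto (A^\top A + m^{-1} I)^{-1} A^\top$ is obtained from $A$ by transposition, matrix multiplication, addition of the constant $m^{-1} I$, and matrix inversion, where the latter is applied only on the open set of invertible matrices on which it is continuous. Consequently each of these regularized maps is continuous on all of $\R^{n \times n}$, and $A \mapsto A^+$ is their pointwise limit, hence Borel measurable. (An alternative, which I would mention as a remark, is to stratify $\R^{n \times n}$ by rank: each stratum $\{A : \mathrm{rank}(A) = \ell\}$ is a Borel set described by minors, the pseudoinverse is continuous on each stratum, and a map that is continuous on each piece of a countable Borel partition is Borel measurable.)

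With this in hand the proof is assembled by composition. Define, for $m \in \N$, the auxiliary mappings $\zeta_m : \Omega \to \Rn$ by
\[ \zeta_m(\omega) := (T(\omega)^\top T(\omega) + m^{-1} I)^{-1} T(\omega)^\top\, y(\omega). \]
Since $T$ and $y$ are $\mathcal F$-measurable and $\zeta_m$ is built from $T(\omega)$ and $y(\omega)$ by the continuous operations listed above together with the continuous matrix--vector product $(A,b) \mapsto Ab$, each $\zeta_m$ is $(\mathcal F, \mathcal B(\Rn))$-measurable. By the limit formula we have $\zeta(\omega) = T(\omega)^+ y(\omega) = \lim_{m\to\infty} \zeta_m(\omega)$ for every $\omega \in \Omega$, and a pointwise limit of $\Rn$-valued measurable functions is again measurable. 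This yields measurability of $\zeta$ and completes the proof.

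I expect the main obstacle to be purely conceptual rather than computational: recognizing that one must not argue by continuity (because of the rank discontinuity of $A \mapsto A^+$) and instead route the measurability through the regularized approximants. Once the limit formula and the continuity of each approximant are in place, the remaining steps---closure of measurability under continuous operations and pointwise limits---are routine.
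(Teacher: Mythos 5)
Your proposal is correct and follows essentially the same route as the paper's own proof: both use the Tikhonov regularization formula $A^+ = \lim_{\lambda \to 0^+}(A^\top A + \lambda I)^{-1}A^\top$ to express $\zeta$ as a pointwise limit of measurable mappings $\zeta_m$, each measurable because the regularized expression involves only continuous operations on $(T,y)$. Your explicit choice $\lambda = m^{-1} > 0$ (guaranteeing invertibility of $T^\top T + m^{-1}I$) and the rank-stratification remark are fine embellishments, but the core argument is identical.
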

\begin{proof} By \cite{GolLoa13}, we have $\lim_{\lambda \to 0} (A^\top A + \lambda I)^{-1} A^\top = A^+$ for any matrix $A \in \R^{n \times n}$. Now, let $(\lambda_k)_k \subset \R$ be an arbitrary sequence converging to zero. Then, a continuity argument implies that the mapping $\omega \mapsto \zeta_k(\omega) := (T(\omega)^\top T(\omega) + \lambda_k I)^{-1} T(\omega)^\top y(\omega)$ is $\mathcal F$-measurable for all $k \in \N$. Since $\zeta$ is the point-wise limit of the sequence $(\zeta_k)_k$, this finishes the proof
\end{proof} 

We now turn to the proof of \cref{fact:one}.

\begin{proof} 
Since the stochastic oracles $\cG$ and $\cH$ are Carath\'{e}odory functions, it follows that $\cG$ and $\cH$ are jointly measurable, i.e., it holds
%
\be \label{eq:app-joint-meas} \cG \in \mathcal B(\Rn) \otimes \mathcal X, \quad \cH \in \mathcal B(\Rn) \otimes \mathcal X, \ee 
see, e.g., \cite[Lemma 8.2.6]{AubFra09} or \cite[Section 4.10]{AliBor06}. We now prove the slightly extended claim
\be \label{eq:app-induction} \pstep, \; \nstep \in \mathcal F_k, \quad {\sf Y}_{k+1}, \; \theta_{k+1} \in \hat{\mathcal F}_k, \quad \text{and} \quad x^{k+1} \in \hat{\mathcal F}_k \ee
inductively. Let us suppose that the statement \cref{eq:app-induction} holds for $k - 1$, $k \in \N$. Then, due to \cref{eq:app-joint-meas} and $x^k \in \hat {\mathcal F}_{k-1} \subset \mathcal F_k$ and using the $(\mathcal F_k,\mathcal B(\Rn) \otimes \mathcal X)$-measurability of the functions $\xi^k_i : \Omega \to \Rn \times \Xi$, $\xi^k_i(\omega) := (x^k(\omega),s^k_i(\omega))$ and $\tau^k_j : \Omega \to \Rn \times \Xi$, $\tau^k_j(\omega) := (x^k(\omega),t^k_j(\omega))$ for $i = 1,...,\ngk$ and $j = 1,...,\nhk$, it follows
\[ \gfsubk(x^k) = \frac{1}{\ngk} \sum_{i=1}^{\ngk} \cG(\xi^k_i) \in \mathcal F_k \quad \text{and} \quad \hfsubk(x^k) = \frac{1}{\nhk} \sum_{j=1}^{\nhk} \cH(\tau^k_j) \in \mathcal F_k. \]
Since the proximity operator $\proxt{\Lambda_k}{r}$ is (Lipschitz) continuous, this also implies $\usubk(x^k) \in \mathcal F_k$, $\psubk(x^k) \in \mathcal F_k$, and $\Fsubk(x^k) \in \mathcal F_k$ and thus, we have $\pstep \in \mathcal F_k$. Moreover, by assumption (C.1) and \cref{lemma:app-penrose}, we can infer $\nstep \in \mathcal F_k$. Next, we study the measurability of the set ${\sf S}_k$ used in the definition of ${\sf Y}_{k+1}$. Since lower semicontinuous functions are (Borel) measurable and we have $\theta_k, x^k \in \hat {\mathcal F}_{k-1} \subset \mathcal F_k$, the mapping ${\sf p}_k(\omega,a,b) := -\beta\theta_k(\omega)^{1-p} \|b\|^{p} - \psi(x^k(\omega))$ is a jointly $\mathcal F_k \otimes \mathcal B(\Rn) \otimes \mathcal B(\Rn)$-measurable Carath\'{e}odory function. Thus, by \cref{lemma:app-meas-2}, the multifunction $\lev_{\veps_k^2}\,{\sf P}_k : \Omega \rightrightarrows \Rn \times \Rn$,
\[ (\lev_{\veps_k^2}\,{\sf P}_k)(\omega) =  \{(a,b) \in \Rn \times \Rn : \psi(a) + {\sf p}_k(\omega,a,b) \leq \veps_k^2\} \]
has measurable graph. Similarly, since ${\sf q}_k(\omega,a) := \|a\| - (\nu_k + \eta)\theta_k(\omega)$ is a Carath\'{e}odory function, we can argue that $\lev_{\veps_k^1}\,{\sf Q}_k : \Omega \rightrightarrows \Rn \times \Rn$ has measurable graph. (More precisely, since the set $(\lev_{\gamma}\,{\sf Q}_k)(\omega)$ is compact-valued for all $\gamma \in \R$ and $\omega \in \Omega$, we can even infer that $\lev_{\veps_k^1}\,{\sf Q}_k$ is measurable, see \cite[Lemma 18.4]{AliBor06}). This easily yields $\gra \; [\dom~r \times \lev_{\veps_k^1}\,{\sf Q}_k] \in \mathcal F_k \otimes \mathcal B(\Rn) \otimes \mathcal B(\Rn)$ and hence, by \cref{lemma:app-meas-1}, the indicator function ${\mathds 1}_{{\sf S}_k}(\cdot,\cdot)$ is jointly $\mathcal F_k \otimes \mathcal B(\Rn) \otimes \mathcal B(\Rn)$-measurable. As before, due to $\mathcal F_k \subset \hat {\mathcal F}_k$, we have $\gfsubkk(\nstep) \in \hat {\mathcal F}_k$, 
\[ u^{\Lambda_{k+1}}_{s^{k+1}}(\nstep) \in \hat {\mathcal F}_k, \quad p^{\Lambda_{k+1}}_{s^{k+1}}(\nstep) \in \hat{\mathcal F}_k, \quad \text{and} \quad \Fsubkk(\nstep) \in \hat {\mathcal F}_k, \]
which finally implies that the binary variable ${\sf Y}_{k+1}$ is $\hat{\mathcal F}_k$-measurable. Using the representation \cref{eq:def-xkk}, it now follows $x^{k+1} \in \hat{\mathcal F}_k$ and, due to $\theta_{k+1} = {\sf Y}_{k+1} \|\Fsubkk(\nstep)\| + (1-{\sf Y}_{k+1}) \theta_k$, we also have $\theta_{k+1} \in \hat {\mathcal F}_k$. Since the constant random variables $x^0$ and $\theta_0$ are trivially $\mathcal F_0$-measurable, we can use the same argumentation for the base case $k = 0$. This finishes the proof of \cref{fact:one}.
\end{proof}

We conclude with a remark on the existence of measurable selections of the multifunction $\mathcal M_k$. Due to \cite{Clarke1990}, the generalized derivative $\partial \proxt{\Lambda_k}{r} : \Rn \rightrightarrows \R^{n \times n}$ is an upper semicontinuous, compact-valued function for all $k \in \N$. Hence, by \cite[Lemma 4.4]{Ulb02}, $\partial \proxt{\Lambda_k}{r}$ is (Borel) measurable for all $k$. As we have shown inductively, the function $\omega \mapsto u^{\Lambda_k}_{s^k(\omega)}(x^k(\omega))$ is $\mathcal F_k$-measurable and thus, by \cite[Lemma 4.5]{Ulb02}, the multifunction 
\[ \mathcal D_k : \Omega \rightrightarrows \R^{n \times n}, \quad \mathcal D_k(\omega) := \partial \proxt{\Lambda_k}{r}(u^{\Lambda_k}_{s^k(\omega)}(x^k(\omega))), \] 
is nonempty, closed-valued, and measurable with respect to the $\sigma$-algebra $\mathcal F_k$ for all $k$. The Kuratowski-Ryll-Nardzewski Selection Theorem, \cite{Roc76,AubFra09,AliBor06}, now implies that $\mathcal D_k$ admits an $\mathcal F_k$-measurable selection $D_k : \Omega \to \R^{n \times n}$. Using $\hfsubk(x^k) \in \mathcal F_k$, this also implies that $M_k := I - D_k(I - \Lambda_k^{-1}\hfsubk(x^k))$ is an $\mathcal F_k$-measurable selection of $\mathcal M_k$. 


\subsection{Proof of \cref{lemma:str-conv}} \label{sec:app-1}

\begin{proof} Since $r$ is subdifferentiable at $\psub(x)$ with $\Lambda(\usub(x) - \psub(x)) \in \partial r(\psub(x))$ we have $r^\prime(\psub(x); h) \geq \iprod{\Lambda \Fsub(x) - \gfsub(x)}{h}$ for all $x,h \in \Rn$, see, e.g., \cite[Proposition 17.17]{BauCom11}. Now, using the convexity of $f - \frac{\mu_f}{2} \|\cdot\|^2$, the $\mu_r$-strong convexity of $r$, \cref{eq:str-conv-dir} with $f \equiv 0$ and $\mu_f = 0$, and the descent lemma \cref{eq:lip-ineq}, it follows
\begin{align*}
\psi(y) & \geq f(x) + \iprod{\nabla f(x)}{y-x} + \frac{\mu_f}{2} \|y-x\|^2 + r(\psub(x)) \\ & \hspace{3ex} + \iprod{\Lambda \Fsub(x) - \gfsub(x)}{y-\psub(x)} + \frac{\mu_r}{2} \|y - \psub(x)\|^2 \\ & \geq \psi(\psub(x)) + \iprod{\nabla f(x)-\gfsub(x)}{\Fsub(x)} - \frac{L}{2} \|\Fsub(x)\|^2 + \frac{\mu_f}{2} \|y-x\|^2 \\ & \hspace{3ex} + \|\Fsub(x)\|^2_\Lambda  + \iprod{\Lambda \Fsub(x) + \nabla f(x) - \gfsub(x)}{y-x} + \frac{\mu_r}{2} \|y - \psub(x)\|^2 & \\ &= \psi(\psub(x)) + \iprod{\nabla f(x)-\gfsub(x)}{\Fsub(x)} + \half(\mu_r - L) \|\Fsub(x)\|^2  \\ & \hspace{3ex} + \|\Fsub(x)\|^2_\Lambda + \iprod{(\Lambda + \mu_r I) \Fsub(x) + \nabla f(x) - \gfsub(x)}{y-x} + \frac{\bar\mu}{2} \|y - x\|^2
\end{align*}
for all $x,y \in \Rn$. Now, setting $\mathcal E_s(x) := \|\nabla f(x)- \gfsub(x)\|$, $b_2 = (\lamM + \mu_r)^2\bar\mu^{-1}$, and applying Young's inequality twice for some $\tau, \alpha > 0$, we get 
\begin{align*} \left | \iprod{(\Lambda + \mu_r I)\Fsub(x) + \nabla f(x) - \gfsub(x)}{x^* -x} \right| & \\ & \hspace{-40ex} \leq
\half (1+\alpha) b_2 \|\Fsub(x)\|^2 + \frac{(1+\tau(1+\alpha))\bar\mu}{2(1+\tau)(1+\alpha)} \|x-x^*\|^2 + \frac{(1+\tau)(1+\alpha)}{2\tau\alpha\bar\mu} \cdot \mathcal E_s(x)^2, \end{align*}
Setting $y = x^*$ in our first derived inequality, recalling $b_1 = L - 2\lamm - \mu_r$, and using the optimality of $x^*$, $\Lambda \succeq \lamm I$, Young's inequality, and the latter estimate, it holds
\begin{align*}
\frac{\alpha\bar\mu}{2(1+\tau)(1+\alpha)} \|x - x^*\|^2 & \\ & \hspace{-12ex} \leq \half (L - \mu_r + (1+\alpha) b_2 ) \|\Fsub(x)\|^2 - \|\Fsub(x)\|_\Lambda^2  \\ & \hspace{-9ex} - \iprod{\Fsub(x)}{\nabla f(x) - \gfsub(x)} + (1+\tau)(1+\alpha)(2\tau\alpha\bar\mu)^{-1} \cdot \mathcal E_s(x)^2 \\ & \hspace{-12ex} \leq \half ( b_1 + \tau + (1+\alpha) b_2) \|\Fsub(x)\|^2 + \frac{\alpha\bar\mu + (1+\tau)(1+\alpha)}{2\tau\alpha\bar\mu} \cdot \mathcal E_s(x)^2. \end{align*}
Multiplying both sides with $2(1+\tau)(1+\alpha)(\alpha\bar\mu)^{-1}$ and choosing $\alpha := (b_1 + b_2 + \tau)^\half b_2^{-\half}$ (this minimizes the factor in front of $\|\Fsub(x)\|^2$), we finally obtain \cref{eq:str-conv-prop2} with $B_2(\tau) := (\tau\alpha^2\bar\mu^2)^{-1}(1+\tau)(1+\alpha)(\alpha\bar\mu+(1+\tau)(1+\alpha))$. If the full gradient is used, we do not need to apply Young's inequality for $\tau > 0$. Thus, we can set $B_1(\tau) \equiv B_1(0)$.
\end{proof}


\subsection{Proof of \cref{lemma:tail-bound} (ii)} \label{sec:app-3}
\begin{proof} 
%
%
As in \cite[Proposition 4.2]{JudNem08}, we first use the inequality $e^x \leq x + \exp(9x^2 / 16)$, $x \in \R$. Hence, for any matrix $X \in \Sn$ it follows 
\[ e^{X} \preceq  X + \exp(9 X^2 /16) \quad \text{and} \quad e^X \preceq \|e^X\| \cdot I \preceq e^{\|X\|} \cdot I, \]
where ``$\preceq$'' denotes the usual semidefinite order on the space of symmetric matrices. By Jensen's inequality and under the condition $\theta \leq \frac{4}{3\bar\sigma_k}$, this implies 
\begin{align*} 
\Exp\left[ e^{\theta {\sf X}_k} \mid \mathcal U_{k-1} \right] & \preceq \Exp\left[ \theta {\sf X}_k \mid \mathcal U_{k-1} \right] + \Exp\left[ \exp(9\theta^2 {\sf X}_k^2 / 16) \mid \mathcal U_{k-1} \right] \\ &\preceq \Exp\left[ \exp(9\theta^2 \|{\sf X}_k\|^2 / 16) \mid \mathcal U_{k-1} \right] \cdot I \preceq \exp(9\theta^2 \bar\sigma_k^2 /16) \cdot I \end{align*}
with probability 1. On the other hand, by Young's inequality, we have $\theta x \leq \frac{3\theta^2\bar\sigma_k^2}{8} + \frac{2x^2}{3\bar\sigma_k^2}$ for all $\theta, x \in \R$, which easily yields
\[ \Exp\left[ e^{\theta {\sf X}_k} \mid \mathcal U_{k-1} \right] \preceq \Exp[e^{\theta \|{\sf X}_k\|} \mid \mathcal U_{k-1}] \cdot I \preceq \exp\left({3\theta^2\bar\sigma_k^2}/{8} + {2}/{3}\right) \cdot I. \]
As in \cite{JudNem08}, we can now combine the last two estimates and it follows
%
\[ \Exp\left[ e^{\theta {\sf X}_k} \mid \mathcal U_{k-1} \right] \preceq \exp(3\theta^2\bar\sigma_k^2 / 4) \cdot I, \quad \text{a.e.}, \quad \forall~\theta \geq 0. \]
The rest of the proof is similar to \cite[Theorem 7.1]{Tro12}. Specifically, following the arguments and steps in the proof of \cite[Theorem 7.1]{Tro12}, we can show
\begin{align*}
\Exp\left[ \tr \exp \left( \sum_{k=1}^m \theta {\sf X}_k \right) \mid \mathcal U_0 \right] 
& \leq \Exp\left[ \tr \exp \left( \sum_{k=1}^{m-1} \theta {\sf X}_k + \log \Exp \left[e^{\theta {\sf X}_m} \mid \mathcal U_{m-1} \right]\right) \mid \mathcal U_0 \right] \\ & \leq \Exp\left[ \tr \exp \left(\sum_{k=1}^{m-1} \theta {\sf X}_k + \frac{3\theta^2\bar\sigma_m^2}{4} I \right) \mid \mathcal U_0 \right] \\ & \leq \ldots \leq \exp(3\theta^2\|\bar\sigma\|^2/4) \cdot n. 
\end{align*}
This estimate can be used in the (conditional) Laplace transform bound \cite[Proposition 3.1]{Tro12}. Optimizing with respect to $\theta$ and setting $\theta = \frac{2\tau}{3\|\bar\sigma\|^2}$, we then get
\[ \Prob\left( \ewmax\left({\sum}_k {\sf X}_k\right) \geq \tau \mid \mathcal U_0\right) \leq e^{-\theta \tau} \cdot \Exp\left[ \tr \exp \left( {\sum}_k \theta {\sf X}_k \right) \mid \mathcal U_0 \right] \leq n \cdot \exp\left({\textstyle -\frac{\tau^2}{3\|\bar\sigma\|^2}}\right). \]
Finally, by rescaling $\tau$ and applying this result to the \textit{dilations} of the matrices ${\sf X}_k$, \cite[Section 2.4]{Tro12}, we obtain \cref{eq:mat-bound-light}.
%
%
%
\end{proof}




\bibliographystyle{siam}
\bibliography{S4N.bib}

\end{document}